\newtheorem{theorem}{Theorem}[section]
\newtheorem{lem}[theorem]{Lemma}
\newtheorem{kor}[theorem]{Corollary}
\newtheorem{prop}[theorem]{Proposition}
\theoremstyle{definition}
\newtheorem{dfn}[theorem]{Definition}
\newtheorem{rem}[theorem]{Remark}
\begin{document}

\title{On Cherny's results in infinite dimensions:
A theorem dual to Yamada-Watanabe}

\author{Marco Rehmeier\footnote{Faculty of Mathematics, Bielefeld University, 33615 Bielefeld, Germany. E-Mail: mrehmeier@math.uni-bielefeld.de }}

\date{}
\maketitle
 
\begin{abstract}

We prove that joint uniqueness in law and the existence of a strong solution imply pathwise uniqueness for variational solutions to stochastic partial differential equations of the form \begin{align*}
\text{d}X_t=b(t,X)\text{d}t+\sigma(t,X)\text{d}W_t, \,\,\,t\geq 0,
\end{align*} and show that for such equations uniqueness in law is equivalent to joint uniqueness in law. Here $W$ is a cylindrical Wiener process in a separable Hilbert space $U$ and the equation is considered in a Gelfand triple $V \subseteq H \subseteq E$, where $H$ is some separable (infinite-dimensional) Hilbert space. This generalizes the corresponding results of A. Cherny for the case of finite-dimensional equations.
\end{abstract}
	\textbf{Keywords:} Stochastic differential equations; Yamada-Watanabe theorem; pathwise uniqueness; uniqueness in law; joint uniqueness in law; variational solutions\\ \\
	\textbf{2010 MSC}: 60H15

\section{Introduction}
The connection between existence and uniqueness of weak and strong solutions is fundamental to the research area of stochastic differential equations. A starting point was the celebrated paper \cite{yamada1971} by Yamada and Watanabe in 1971, in which the authors prove that weak existence and pathwise uniqueness yield the existence of a unique strong solution for finite-dimensional stochastic differential equations. Later several authors worked on a dual statement of this seminal result, i.e. on the implication
 \begin{align*}
 \text{Joint uniqueness in law + existence of strong solution} \Rightarrow \text{pathwise uniqueness}.\tag{Dual}
 \end{align*} A proof of (Dual) can be found in the works of Jacod (\cite{Jacod}) and Engelbert (\cite{Engelbert}). Unfortunately, verifying joint uniqueness in law turns out to be rather difficult in applications. In 2001, Cherny contributed a substantial improvement to this dual result by showing the equivalence of uniqueness in law and joint uniqueness in law for finite-dimensional equations in \cite{ChernyiPaper}. This striking result provides further structural insight into the interplay of the aforementioned notions of existence and uniqueness. \\ \\Recently the study of stochastic partial differential equations, which are necessarily infinite-dimensional equations, attracted much attention and nurtured extensive research activity in this direction. In \cite{RPaper}, Röckner, Schmuland and Zhang extended the classical Yamada-Watanabe theorem to the framework of variational solutions for infinite-dimensional equations in Hilbert spaces. Naturally this brings up two questions, namely  ``Does the dual result (Dual) also hold in this infinite-dimensional framework?'' and ``Can Cherny's result on the equivalence of uniqueness and joint uniqueness in law be generalized to this setting?''.\\ \\ In this paper we give affirmative answers to both question: We prove (Dual) in the framework of the variational approach for solutions to stochastic partial differential equations of the form \begin{align*}
 \text{d}X_t=b(t,X)\text{d}t+\sigma(t,X)\text{d}W_t, \,\,\,t\geq 0,
 \end{align*}in a (infinite-dimensional) Gelfand triple $V \subseteq H \subseteq E$ with a separable Hilbert space $H$, where $W$ is a cylindrical Wiener process in another separable Hilbert space $U$. Further we prove the equivalence of uniqueness and joint uniqueness in law for deterministic initial conditions to such equations. We point out that both statements have also been stated in \cite{Qiao} by Qiao. For a comparison to this work, see Remark \ref{CompQiao}.\\
\\We stress that (Dual) and the equivalence of uniqueness and joint uniqueness in law have also been discussed for other types of equations and notions of solutions: Ondrejat provided affirmative answers to both questions in the setting of mild solutions for Banach space-valued equations in \cite{Ondrejat}. See Remark \ref{Comp Ondrejat} for a more detailed comparison to his work. In \cite{Kurtz}, Kurtz deals with a more general type of stochastic equations and in particular considers (Dual) in this more general framework. However, the equivalence of uniqueness and joint uniqueness in law is not discussed in his setting.\\ \\
 This paper is organized as follows: In the second section we clarify notation and introduce the general framework, including the relevant notions of existence and uniqueness of solutions. The third section contains both main theorems. We present an outline of both proofs in order to render a better understanding of the detailed proofs later on. An explanation on why we have to restrict the second main theorem to deterministic initial conditions is also included. The final section contains the proofs of the main results as well as necessary preparations. Appendix A contains further preparations and, for the convenience of readers, who are not familiar with stochastic integration in detail, Appendix B reviews stochastic integration with respect to Hilbert space-valued martingales, since this will be of great importance within our proofs.
\section{Preliminaries}
\subsection{Notation}
The set of all probability measures on a $\sigma$-algebra $\mathcal{A}$ will be denoted by $\mathcal{M}^+_1(\mathcal{A})$. Given a measure space $(\Omega,\mathcal{F},\mathbb{P})$, the $\sigma$-algebra $\overline{\mathcal{F}}^{\,\mathbb{P}}$ denotes the completion of $\mathcal{F}$ with respect to $\mathbb{P}$. For $I = [0,T]$ for $T > 0$ or $I = \mathbb{R}_+$ we call $(\Omega, \mathcal{F}, (\mathcal{F}_t)_{t \in I}, \mathbb{P})$ a \textit{stochastic basis}, if $\mathcal{F}$ is complete with respect to $\mathbb{P}$ and  $(\mathcal{F}_t)_{t \in I}$ is a right-continuous filtration such that every zero set is contained in $\mathcal{F}_0.$ In this case we denote the corresponding predictable $\sigma$-algebra by $\mathcal{P}_T$ $(\text{if }I = [0,T])$ or $\mathcal{P}_{\infty}$ $(\text{if }I = \mathbb{R}_+)$. We say that a process $X = (X_t)_{t \in I}$ on a stochastic basis is $(\mathcal{F}_t)$-predictable, if $X$ is predictable and we want to stress the dependence on the underlying filtration $(\mathcal{F}_t)_{t \in I}$.\\ \\
Given two separable Hilbert spaces $U$ and $H$, Lin$(U,H)$ denotes the set of linear maps between $U$ and $H$ and $\text{L}(U,H)$ is the subset of all such operators, which are bounded and defined on the whole of $U$. For the adjoint of $A \in \text{L}(U,H)$ we write $A^*$. L$_2(U,H)$ is the set of all Hilbert-Schmidt-operators, i.e. the subset of elements $A$ of $\text{L}(U,H)$ such that $||A||_{\text{L}_2(U,H)} := \big(\sum_{k=1}^{\infty}||Ae_k||^2_{H}\big)^{\frac{1}{2}}<\infty$ for some (hence every) orthonormal basis $(e_k)_{k \in \mathbb{N}}$ of $U$. Equipped with the inner product $(A,B) \mapsto \sum_{k=1}^{\infty}\langle Ae_k,Be_k \rangle_{H}$, $\text{L}_2(U,H)$ becomes a separable Hilbert space. The subset $\text{L}_1(U)$ of $\text{L}(U)$ denotes the set of all nuclear operators on $U$ and $\text{L}^+_1(U)$ is the set of all nuclear operators, which are symmetric and non-negative. Every $A \in \text{L}_1(U)$ has $\textit{finite trace}$ (i.e. $\text{tr}(A) :=\sum_{k=1}^{\infty}\langle A e_k,e_k \rangle _U < +\infty$) and $A \in \text{L}_1^+(U)$ if and only if $A$ is symmetric, non-negative and of finite trace.
\subsection{Basic Setting}
Large parts of the framework presented in this subsection are as in Appendix E of \cite{RSPDE}. Let  ($H,\langle\cdot\,,\cdot\rangle_{H}$) and $(U,\langle \cdot , \cdot \rangle_U)$ be real separable (infinite-dimensional) Hilbert spaces with norms $||\cdot ||_{H}$ and $||\cdot||_U$, respectively. Further let $V$ and $E$ be real separable Banach spaces with norms $||\cdot ||_{V}, ||\cdot ||_{E}$, respectively, such that $V \subseteq H \subseteq E$ continuously and densely. Then Kuratowski's theorem \cite[p.487]{Kuratowski} implies $$V \in \mathcal{B}(H), \,\mathcal{B}(V) = \mathcal{B}(H)\cap V \text{ and } H \in \mathcal{B}(E), \,\mathcal{B}(H) = \mathcal{B}(E)\cap H.$$
For $x \in H$ the map
$$ x \mapsto ||x||_{V} := \begin{cases} 
||x||_V, & x \in V \\
+\infty, & x \in H\backslash V
\end{cases}
$$ is $\mathcal{B}(H)$-measurable and lower semicontinuous on $H$. Thus the path space
$$\mathbb{B} := \bigg\{\omega \in C(\mathbb{R}_+;H)\big{|}\int_{0}^{T}||\omega(s)||_V \text{d}s < \infty \text{ for all } T \geq 0 \bigg\}$$ is well-defined. We define a filtration on $\mathbb{B}$ by $\mathcal{B}_t(\mathbb{B}) := \sigma(\pi_s|0 \leq s \leq t)$ for any $t \geq 0$. Further $(\mathcal{B}_t^+(\mathbb{B}))_{t \geq 0}$ denotes the corresponding right-continuous filtration. Here $\pi_t: \mathbb{B} \to H$ is the canonical projection, i.e. $\pi_t(\omega) = \omega(t)$ for $\omega \in \mathbb{B}$. Note that $(\mathbb{B},\rho)$ is a complete separable metric space, with metric $\rho$ defined through
$$\rho(\omega_1,\omega_2) := \sum_{k=1}^{\infty}2^{-k}\bigg[\bigg(\int_{0}^{k}||\omega_2(s)-\omega_1(s)||_V \text{d}s + \underset{t \in [0,k]}{\text{sup}}||\omega_2(t)-\omega_1(t)||_{H}\bigg)\wedge 1\bigg].$$ \\We denote the Borel $\sigma$-algebra of $(\mathbb{B},\rho)$ by $\mathcal{B}(\mathbb{B})$.
\subsubsection*{The stochastic differential equation under investigation}
We consider stochastic differential equations of the form
\begin{equation}\label{Equation}
\text{d}X_t=b(t,X)\text{d}t+\sigma(t,X)\text{d}W_t, \, \, \,t \geq 0,
\end{equation}which is a formal notation for the integral equation
$
X_t = X_0+ \int_{0}^{t}b(s,X)\text{d}s+\int_{0}^{t}\sigma(s,X)\text{d}W_s, \, \, \, t \geq 0,
$ where the first integral is a pathwise $E$-valued Bochner-integral and the second one is an $H$-valued stochastic Itô-integral. We assume that $b: \mathbb{R}_+ \times \mathbb{B} \to E$ , $\sigma: \mathbb{R}_+ \times \mathbb{B} \to \text{L}_2(U,H)$ and $W = (W_t)_{t \geq 0}$ fulfill the following properties. \\
\\
\textbf{Assumption 1}.
\begin{enumerate}
	\item [(i)] $b$ is $\mathcal{B}(\mathbb{R}_+)\otimes \mathcal{B}(\mathbb{B})/\mathcal{B}(E)$-measurable and $b(t,\cdot)$ is $\mathcal{B}_t(\mathbb{B})/\mathcal{B}(E)$-measurable for all $t \geq 0$,
	\item[(ii)] $\sigma$ is $\mathcal{B}(\mathbb{R}_+)\otimes \mathcal{B}(\mathbb{B})/\mathcal{B}(\text{L}_2(U,H))$-measurable and $\sigma(t,\cdot)$ is $\mathcal{B}_t(\mathbb{B})/\mathcal{B}(\text{L}_2(U,H))$-measurable for all $t \geq 0$,
	\item[(iii)] $W$ is an $(\mathcal{F}_t)$-$\mathbb{R}^{\infty}$-Wiener process on $U$ with covariance $\text{id}_{U}$ on a stochastic basis $(\Omega, \mathcal{F}, (\mathcal{F}_t)_{t \geq 0}, \mathbb{P})$, i.e. formally $W_t = \sum_{k=1}^{\infty}\beta_k(t)e_k,$ where $(e_k)_{k \in \mathbb{N}}$ is an orthonormal basis of $U$ and ($\beta_k)_{k \in \mathbb{N}}$ is a family of independent real-valued $(\mathcal{F}_t)$-Brownian motions on $\Omega$. We also write $W = (\beta_k)_{k \in \mathbb{N}}$ and call $W$ a \textit{standard }$\mathbb{R}^{\infty}$\textit{-Wiener process}. 
	\end{enumerate}
	The stochastic integral in (\ref{Equation}) is defined through
	$\int_{0}^{t}\sigma(s,X)\text{d}W_s := \int_{0}^{t}\sigma(s,X)\circ J^{-1}\text{d}\bar{W}_s.$ Here $J:U\to \bar{U}$ is a one-to-one Hilbert-Schmidt-map with values in a separable Hilbert space $(\bar{U},\langle \cdot, \cdot \rangle _{\bar{U}})$ and $\bar{W}_t := \sum_{k=1}^{\infty}\beta_k(t)Je_k,\,\,\,t \geq 0$, is the \textit{cylindrical Wiener process} associated to $W$. The orthonormal basis $(e_k)_{k \in \mathbb{N}}$ is the same as in Assumption 1 above, which we fix from now on. Such $J$ and $\bar{U}$ always exist and the definition of the stochastic integral does not depend on the choice of $J$ or $\bar{U}$. Further $\bar{W}$ is a $\bar{Q}$-Wiener process with $\bar{Q} := JJ^*$ $\in$ $\text{L}^+_1(\bar{U})$. We fix such $J$, $\bar{U}$ and $\bar{Q}$ from now on. For technical details about stochastic integration with respect to cylindrical Wiener processes we refer to \cite[Section 2.5.]{RSPDE}.\\
	\\The paths of $\bar{W}$ are elements of the space
	$\mathbb{W}_0 := \{\omega \in C(\mathbb{R}_+,\bar{U})\big{|}\omega(0) =0\}.$ Define a metric on $\mathbb{W}_0$ through
	$$\zeta(\omega_1,\omega_2) :=\sum_{k=1}^{\infty}2^{-k}\big(||\omega_1-\omega_2||_{L^{\infty}([0,k];\bar{U})}\wedge 1\big)$$ and observe that $(\mathbb{W}_0,\zeta)$ is a complete separable metric space. We define a filtration on $(\mathbb{W}_0,\mathcal{B}(\mathbb{W}_0))$ through $\mathbb{B}_t(\mathbb{W}_0):= \sigma(\pi_s|0\leq s \leq t)$, where as before $\pi_t$ denotes the canonical projection. Note $\mathcal{B}(\mathbb{W}_0) = \sigma(\pi_t|t \geq 0)$ and that this implies the $\mathcal{F} /\mathcal{B}(\mathbb{W}_0)$-measurability of $\bar{W}:\Omega \to \mathbb{W}_0$, $\omega \mapsto (\bar{W}(\omega)_t)_{t \geq0}$ due to the $(\mathcal{F}_t)$-adaptedness of $(\bar{W}_t)_{t \geq0}$.
\subsubsection*{Strong, weak solutions and notions of uniqueness}

We now present the relevant notions of solutions and uniqueness for our considerations and clarify the relations between them.
\begin{dfn}\label{weak sol}
A pair $(X,W)$ is called a \textit{weak solution} to Eq. (\ref{Equation}), if $X=(X_t)_{t \geq 0}$ is an $(\mathcal{F}_t)$-adapted process with paths in $\mathbb{B}$ and $W$ is a standard $(\mathcal{F}_t)$-$\mathbb{R}^{\infty}$-Wiener process on some stochastic basis $(\Omega, \mathcal{F}, (\mathcal{F}_t)_{t \geq 0}, \mathbb{P})$ such that the following holds true:
\begin{enumerate}
	\item[(i)] $\int_{0}^{T}||b(s,X)||_E\,\text{d}s + \int_{0}^{T}||\sigma(s,X)||^2_{\text{L}_2(U, H)}\text{d}s < +\infty \, \, \, \, \mathbb{P}\text{-a.s.}$ for every $T \geq 0$.
	\item[(ii)] $X_t=X_0 + \int_{0}^{t}b(s,X)\text{d}s + \int_{0}^{t}\sigma(s,X)\text{d}W_s$ for every $t \geq 0$  $\mathbb{P}\text{-a.s.}$ as an equation on $E$.
	\end{enumerate}We call $X$ a \text{solution process} of Eq. (\ref{Equation}) or simply \textit{solution}. Note that such $X$ is $\mathcal{F}/\mathcal{B}(\mathbb{B})$-measurable.
\end{dfn}
\begin{dfn} \label{weak uniqu}\begin{enumerate} \item [(i)]\textit{Weak uniqueness} (also \textit{uniqueness in law}) holds for Eq. (\ref{Equation}), if for any two solutions $(X,W^1)$ and $(Y,W^2)$ on (possibly different) stochastic bases $(\Omega, \mathcal{F}, (\mathcal{F}_t)_{t \geq 0}, \mathbb{P})$ and $(\Omega', \mathcal{F}', (\mathcal{F}'_t)_{t \geq 0}, \mathbb{P}')$, respectively, 
\begin{equation}\label{initial distr equal}
\mathbb{P} \circ X_0^{-1} = \mathbb{P}'\circ Y_0^{-1}
\end{equation}implies $\mathbb{P}\circ X^{-1} = \mathbb{P}'\circ Y^{-1}$ as measures on $(\mathbb{B},\mathcal{B}(\mathbb{B}))$.
\item[(ii)] \textit{Weak uniqueness given }$\mu \in \mathcal{M}^+_1(\mathcal{B}(H))$ holds, if the implication in (i) is at least valid for all weak solutions $(X,W^1), (Y,W^2)$ with initial distribution $\mu$.
					\item[(iii)] Eq. (\ref{Equation}) has \textit{joint uniqueness in law} (also joint weak uniqueness), if in the setting of (i) (\ref{initial distr equal}) implies 
					$\mathbb{P}\circ(X,\bar{W}^1)^{-1} = \mathbb{P}'\circ(Y,\bar{W}^2)^{-1}$ as measures on $\mathcal{B}(\mathbb{B})\otimes \mathcal{B}(\mathbb{W}_0)$. The definition of \textit{joint uniqueness in law given }$\mu \in \mathcal{M}^+_1(\mathcal{B}(H))$ is analogue to (ii).
					\item[(iv)] $\delta$\textit{-weak uniqueness} and $\delta$\textit{-joint weak uniqueness} hold, if the respective implications in (i) and (iii) hold at least when (\ref{initial distr equal}) is restricted to
					\begin{equation*}
					\mathbb{P} \circ X_0^{-1} = \delta_x = \mathbb{P}'\circ Y_0^{-1}
					\end{equation*}for every $x \in H$, i.e. to arbitrary deterministic initial conditions. $\delta_x$ denotes the Dirac-measure in $x$. 
					\end{enumerate}
					\end{dfn}
\begin{dfn}
\begin{enumerate}
\item [(i)] \textit{Pathwise uniqueness} holds for Eq. (\ref{Equation}), if for any two weak solutions $(X,W)$, $(Y,W)$ on a common stochastic basis $(\Omega, \mathcal{F}, (\mathcal{F}_t)_{t \geq 0}, \mathbb{P})$ with a common standard $\mathbb{R}^{\infty}$-Wiener process $W$, 
$X_0 = Y_0 \, \, \,\mathbb{P}\text{-a.s.}$ implies $X_t = Y_t $\text{ for all }$ \,t \geq 0 \, \, \, \mathbb{P}\text{-a.s.}$
\item[(ii)] For $\mu \in \mathcal{M}^+_1(\mathcal{B}(H))$, \textit{pathwise uniqueness given} $\mu$ means that the implication in (i) holds at least for all weak solutions as in (i), which additionally satisfy $\mathbb{P}\circ X_0^{-1} = \mu = \mathbb{P}\circ Y_0^{-1}$.
\item[(iii)] $\delta$\textit{-pathwise uniqueness} holds, if the implication in (i) holds at least for all solutions $X$ and $Y$ with $X_0 =x = Y_0$ for any $x \in H$.
\end{enumerate}
\end{dfn}
In order to define the notion of a strong solution, let $\hat{\mathcal{E}}$ denote the set of all maps $F:H\times \mathbb{W}_0 \to \mathbb{B}$ such that for each $\mu \in \mathcal{M}_1^+(\mathcal{B}(H))$ there is a $\overline{\mathcal{B}(H)\otimes \mathcal{B}(\mathbb{W}_0)}^{\mu\otimes \mathbb{P}^{\bar{Q}}}/\mathcal{B}(\mathbb{B})$-measurable map $F_{\mu}$ such that for $\mu$-a.a. $h \in H$ $$F(h,\omega) = F_{\mu}(h,\omega) \text{ for } \mathbb{P}^{\bar{Q}}\text{-a.a. } \omega \in \mathbb{W}_0$$holds. $\mathbb{P}^{\bar{Q}}$ denotes the distribution of the $\bar{Q}$-Wiener process $\bar{W}$ on $(\mathbb{W}_0, \mathcal{B}(\mathbb{W}_0))$. Obviously each $F_{\mu}$ is uniquely determined up to a $\mu \otimes \mathbb{P}^{\bar{Q}}\text{-zero}$ set.
\begin{dfn}\label{strong sol}
Eq. (\ref{Equation}) has a \textit{strong solution}, if there exists $F \in \hat{\mathcal{E}}$ such that for all $h \in H$, $\omega \mapsto F(h,\omega)$ is $\overline{\mathcal{B}_t(\mathbb{W}_0)}^{\mathbb{P}^{\bar{Q}}}/\mathcal{B}_t(\mathbb{B})$-measurable for every $t \geq 0$ and for every standard $(\mathcal{F}_t)$-$\mathbb{R}^{\infty}$-Wiener process $W$ on any stochastic basis $(\Omega, \mathcal{F}, (\mathcal{F}_t)_{t \geq 0}, \mathbb{P})$ and any $\mathcal{F}_0/\mathcal{B}(H)$-measurable map $\xi: \Omega \to H$, the $\mathbb{B}$-valued process
$X := F_{\mathbb{P}\circ \xi^{-1}}(\xi, \bar{W})$ is such that $(X,W)$ is a weak solution to Eq. (\ref{Equation}) with $X_0 = \xi$ $\mathbb{P}$-a.s. We will conventionally call $F$ \textit{the strong solution}.
\end{dfn}
\section{Main Results}
We present the two main theorems of this paper. We give outlines of their proofs and point out why we have to restrict the second theorem to deterministic initial conditions. The sketch of a simple proof for a very special case of the second theorem is included as well in order to demonstrate the idea we follow for the general version. We assume the framework of the previous section to be in force.
\begin{theorem}\label{Main Theorem I}
	Consider the stochastic evolution equation 
	\begin{equation*}
	X_t = X_0 + \int_{0}^{t}b(s,X)\text{d}s+\int_{0}^{t}\sigma(s,X)\text{d}W_s, \, \, \, t \geq 0,
	\end{equation*}where we assume that $b$, $\sigma$ and $W$ fulfill Assumption 1. If this equation has a strong solution and joint uniqueness in law given $\mu$ holds for some $\mu \in \mathcal{M}^+_1(\mathcal{B}(H))$, then pathwise uniqueness given $\mu$ holds as well. In particular, the existence of a strong solution and joint uniqueness in law imply pathwise uniqueness. 
\end{theorem}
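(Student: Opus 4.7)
My plan is to adapt the classical Jacod/Cherny argument to the present variational setting. I would start by fixing two weak solutions $(X, W)$ and $(Y, W)$ on a common stochastic basis $(\Omega, \mathcal{F}, (\mathcal{F}_t)_{t \geq 0}, \mathbb{P})$ driven by the same standard $\mathbb{R}^{\infty}$-Wiener process $W$, with $X_0 = Y_0 =: \xi$ $\mathbb{P}$-a.s.\ and $\mathbb{P}\circ \xi^{-1} = \mu$; the goal is to prove $X = Y$ $\mathbb{P}$-a.s. The idea is to compare both $X$ and $Y$ to a single ``reference'' process $Z$ obtained by applying the strong solution map to $(\xi, \bar{W})$.

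The first step would be to invoke the hypothesis: let $F \in \hat{\mathcal{E}}$ be as in Definition~\ref{strong sol} and set $Z := F_{\mu}(\xi, \bar{W})$. Since $\xi$ is $\mathcal{F}_0$-measurable with law $\mu$ and is $\mathbb{P}$-independent of $\bar{W}$, this is $\mathbb{P}$-a.s.\ well defined, and by the very definition of a strong solution, $(Z, W)$ is a weak solution on the same basis with $Z_0 = \xi$ $\mathbb{P}$-a.s. Thus both $(X, W)$ and $(Z, W)$ are weak solutions with initial distribution $\mu$, so joint uniqueness in law given $\mu$ would yield
\[\mathbb{P}\circ (X, \bar{W})^{-1} = \mathbb{P}\circ (Z, \bar{W})^{-1} \quad \text{on } \mathcal{B}(\mathbb{B}) \otimes \mathcal{B}(\mathbb{W}_0).\]

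Next I would push both sides forward under the measurable map $\Phi(x, w) := (x, F_{\mu}(\pi_0(x), w), w)$. Because $\pi_0(X) = \xi = \pi_0(Z)$ and $F_{\mu}(\xi, \bar{W}) = Z$ by construction, one has $\Phi(X, \bar{W}) = (X, Z, \bar{W})$ and $\Phi(Z, \bar{W}) = (Z, Z, \bar{W})$, so the pushforward identity gives
\[\mathbb{P}\circ (X, Z, \bar{W})^{-1} = \mathbb{P}\circ (Z, Z, \bar{W})^{-1}.\]
Projecting onto the first two coordinates, the right-hand marginal is supported on the diagonal of $\mathbb{B}\times\mathbb{B}$, which is Borel because $(\mathbb{B},\rho)$ is Polish, and hence so is the left-hand marginal: $X = Z$ $\mathbb{P}$-a.s. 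Repeating the argument with $Y$ in place of $X$ would yield $Y = Z$ $\mathbb{P}$-a.s., whence $X = Y$ $\mathbb{P}$-a.s., proving pathwise uniqueness given $\mu$.

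\textbf{Main obstacle.} The only delicate point I anticipate is that $F_{\mu}$ is only measurable with respect to the $\mu \otimes \mathbb{P}^{\bar{Q}}$-completion of $\mathcal{B}(H) \otimes \mathcal{B}(\mathbb{W}_0)$, so inserting $F_{\mu}$ into the map $\Phi$ and applying the pushforward identity above requires some care with exceptional sets. This should be handled by exploiting the $\mathbb{P}$-independence of $\xi$ and $\bar{W}$, which ensures that their joint law under $\mathbb{P}$ equals exactly $\mu \otimes \mathbb{P}^{\bar{Q}}$, so that the completion-null sets inherent in the definition of $F_{\mu}$ remain $\mathbb{P}$-null after composition with $(\xi,\bar{W})$. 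Once this measurability bookkeeping is in place, the proof rests solely on the Polish structure of $\mathbb{B}$ and $\mathbb{W}_0$ and on the measurability properties of $b$ and $\sigma$ in Assumption 1.
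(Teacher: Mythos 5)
Your proposal is correct, and it converts joint uniqueness into pathwise uniqueness by a genuinely different mechanism than the paper, even though both proofs share the same two pillars: the strong solution $Z=F_{\mu}(\xi,\bar{W})$ is a fixed measurable functional of $(\xi,\bar{W})$, and joint uniqueness in law given $\mu$ forces $\mathbb{P}\circ(X,\bar{W})^{-1}=\mathbb{P}\circ(Z,\bar{W})^{-1}$. The paper then passes through regular conditional distributions: it shows that the r.c.d.\ of $X$ with respect to $\mathcal{G}_0^{\bar{W}}:=\overline{\sigma(\xi,\bar{W})}^{\,\mathbb{P}}$ equals $\delta_{Z(\omega)}$ (via a $\cap$-stable generator of $\mathcal{G}_0^{\bar{W}}$ and the identity $\mathbb{P}(\{\xi\in G\}\cap\{X\in A\}\cap\{\bar{W}\in B\})=\mathbb{P}(\{\xi\in G\}\cap\{Z\in A\}\cap\{\bar{W}\in B\})$), deduces $\mathbb{E}[g(X)|\mathcal{G}_0^{\bar{W}}]=g(Z)$ for bounded measurable $g$, concludes $g(X)=g(Z)$ $\mathbb{P}$-a.s.\ from the computation $\mathbb{E}[(g(X)-g(Z))^2]=2\mathbb{E}[g(Z)^2]-2\mathbb{E}[g(Z)^2]=0$, and finally tests against truncated coordinate functionals $g_q^{j,n}$. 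You instead push the equality of joint laws forward under $\Phi(x,w)=(x,F_{\mu}(\pi_0(x),w),w)$ and read off $X=Z$ from the fact that the image law is carried by the (closed, hence Borel) diagonal. Your route is shorter and dispenses with the r.c.d.\ machinery and the $L^2$ identity; its entire cost is the measurability bookkeeping you correctly flag, namely that $\Phi$ is measurable for the completion of $\mathcal{B}(\mathbb{B})\otimes\mathcal{B}(\mathbb{W}_0)$ under the common law $\mathbb{P}\circ(X,\bar{W})^{-1}$ --- which holds because $(\pi_0,\mathrm{id})$ pushes that law onto $\mu\otimes\mathbb{P}^{\bar{Q}}$ (using the $\mathcal{F}_0$-measurability of $\xi$ and hence its independence from $\bar{W}$), so $\mu\otimes\mathbb{P}^{\bar{Q}}$-null sets pull back to null sets and the exceptional sets in the definition of $F_{\mu}$ do no harm. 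The paper's r.c.d.\ argument hides the same completion issue inside the well-definedness of $Z$ as a $\mathcal{G}_0^{\bar{W}}$-measurable map (its observation (ii)), so neither approach escapes it; yours simply makes it the only delicate point. Both yield the ``in particular'' statement by letting $\mu$ range over all of $\mathcal{M}_1^+(\mathcal{B}(H))$.
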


\begin{theorem}\label{UL impl JUL}
	Consider the stochastic evolution equation
	\begin{equation}\label{SDE}
	X_t = X_0 + \int_{0}^{t}b(s,X)\text{d}s+\int_{0}^{t}\sigma(s,X)\text{d}W_s, \, \, \, t \geq 0,
	\end{equation}where we assume that $b$, $\sigma$ and $W$ fulfill Assumption 1. For any $x \in H$, weak uniqueness given $\delta_x$ is equivalent to joint uniqueness in law given $\delta_x$. In particular, $\delta$-uniqueness in law is equivalent to $\delta$-joint uniqueness in law.	
\end{theorem}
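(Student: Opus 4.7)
The easy direction ``$\delta$-joint uniqueness in law $\Rightarrow$ $\delta$-weak uniqueness'' is immediate by projecting joint laws onto the $\mathbb{B}$-marginal. For the converse, fix $x\in H$ and assume weak uniqueness given $\delta_x$. Let $(X,W)$ and $(X',W')$ be two weak solutions of (\ref{SDE}) on stochastic bases $(\Omega,\mathcal F,(\mathcal F_t)_{t\geq 0},\mathbb P)$ and $(\Omega',\mathcal F',(\mathcal F_t')_{t\geq 0},\mathbb P')$ with $X_0=x=X_0'$ almost surely, and set $P:=\mathbb P\circ(X,\bar W)^{-1}$, $P':=\mathbb P'\circ(X',\bar W')^{-1}$ on $(\mathbb B\times\mathbb W_0,\mathcal B(\mathbb B)\otimes\mathcal B(\mathbb W_0))$. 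The goal is $P=P'$. By weak uniqueness given $\delta_x$, the $\mathbb B$-marginals of $P$ and $P'$ coincide; call this common marginal $\mathbb Q$. The $\mathbb W_0$-marginals of both are the $\bar Q$-Wiener measure $\mathbb P^{\bar Q}$.

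My plan, following Cherny's approach, is to couple the two solutions on a single probability space so that they share a common $X$-component, and then argue that this forces their drivers to have the same conditional law. Since $\mathbb W_0$ is Polish, disintegrate $P(dz,du)=\mathbb Q(dz)\,K(z,du)$ and $P'(dz,du)=\mathbb Q(dz)\,K'(z,du)$ via regular conditional probabilities. On the product space $\tilde\Omega:=\mathbb B\times\mathbb W_0\times\mathbb W_0$ put $\tilde{\mathbb P}(dz,du,dv):=\mathbb Q(dz)\,K(z,du)\,K'(z,dv)$ and let $(Z,V^1,V^2)$ be the canonical projections; then $(Z,V^1)\sim P$, $(Z,V^2)\sim P'$, and $V^1,V^2$ are conditionally independent given $Z$ under $\tilde{\mathbb P}$. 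The critical step is now to check that both $(Z,V^1)$ and $(Z,V^2)$ remain weak solutions of (\ref{SDE}) on $\tilde\Omega$ with respect to the right-continuous completion of the filtration jointly generated by the triple $(Z,V^1,V^2)$; the equation holds pathwise, so the substantive content is that each $V^i$ retains its cylindrical $\mathbb R^\infty$-Wiener property under this enlargement, which follows from $V^1\perp V^2\mid Z$ via a standard immersion (``$H$-hypothesis'') argument applied coordinatewise to the scalar components $\beta_k^i$.

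Once both pairs are weak solutions of (\ref{SDE}) on $\tilde\Omega$ with the same $Z$, subtracting the two integral equations gives
\begin{equation*}
\int_0^t\sigma(s,Z)\,d(V^1-V^2)_s=0,\quad t\geq 0,\ \tilde{\mathbb P}\text{-a.s.}
\end{equation*}
To turn this into $K=K'$ $\mathbb Q$-a.s.\ (and hence $P=P'$), I would introduce a Bernoulli$(1/2)$ random variable $\xi$ independent of $(Z,V^1,V^2)$ and set $V^\xi:=\xi V^1+(1-\xi)V^2$. Then $(Z,V^\xi)$ is a weak solution on the further enlarged space with joint law $\tfrac12(P+P')$ and first marginal $\mathbb Q$. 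A measurable-selection argument on the canonical space, distinguishing $K$ from $K'$ on $\{K(z,\cdot)\neq K'(z,\cdot)\}$, combined with the hypothesis of weak uniqueness given $\delta_x$, then contradicts the existence of a set of positive $\mathbb Q$-measure on which $K$ and $K'$ differ.

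I expect the enlargement-of-filtration step to be the main technical obstacle, since it requires the cylindrical $\mathbb R^\infty$-Wiener property to survive adjoining a second, conditionally independent cylindrical process; in finite dimensions this is a scalar statement available from Cherny's toolbox, but in the present Hilbert-space setting it has to be verified coordinatewise via the framework of Appendix B. The restriction to deterministic initial conditions enters because the coupling step above relies on the two solutions having the same (deterministic) initial value; for a general initial law $\mu$ one would have to couple the two initial values as well, disrupting the clean conditional-independence structure used to verify that the pairs $(Z,V^i)$ remain weak solutions in the enlarged filtration.
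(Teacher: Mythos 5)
Your proposal diverges from the paper's argument at the first substantive step, and the divergence introduces a genuine gap. You disintegrate the joint law over the \emph{solution} $Z$ and couple the two drivers so that they are conditionally independent given the full path of $Z$; you then assert that each $V^i$ retains its cylindrical Wiener property in the filtration generated by $(Z,V^1,V^2)$ via ``a standard immersion argument''. This is not standard and fails in general: for $V^1_t-V^1_s$ to be independent of $\sigma(Z_r,V^1_r,V^2_r\,|\,r\le s)$ one would need, after conditioning out $V^2$, that the conditional law of $V^1_t-V^1_s$ given $\sigma(Z)\vee\sigma(V^1_r\,|\,r\le s)$ equal the unconditional Gaussian law --- but $\sigma(Z)$ contains the \emph{future} of the solution, which encodes information about future increments of the driver wherever $\sigma(t,\cdot)\neq 0$ (already for $\sigma=\mathrm{id}_U$ the increment $V^1_t-V^1_s=Z_t-Z_s$ is $\sigma(Z)$-measurable, so its conditional law is a Dirac mass). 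The analogous disintegration in the Yamada--Watanabe direction works only because there one conditions on the \emph{noise}, whose increments are independent of the past, and even then an adaptedness property of the disintegration kernels must be proved (cf.\ Lemmas 2.5 and 2.6 in \cite{RPaper}); no such property is available for the kernel $K(z,\mathrm{d}u)$ of the noise given the solution.

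Second, even granting the coupling, the conclusion is missing. The identity $\int_0^t\sigma(s,Z)\,\mathrm{d}(V^1-V^2)_s=0$ constrains only the components of $V^1-V^2$ in $\ker\sigma(s,Z)^{\perp}$; it says nothing about the $\ker\sigma$-components, and the passage from there to $K=K'$ via a Bernoulli mixture and an unspecified measurable-selection argument is exactly where the content of the theorem lies. The paper resolves this with an explicit orthogonal decomposition: with $\phi=\mathrm{pr}_{\ker\sigma^{\perp}}$ and $\psi=\mathrm{pr}_{\ker\sigma}$ it builds $\bar V^1,\bar V^2$ out of $\bar W$ and auxiliary independent noise, proves via Lemma \ref{Long rcd lemma} and the hypothesis of weak uniqueness given $\delta_x$ that $\tilde X$ is independent of $\bar V^2$, and then writes $\bar{\tilde W}_t=\int_0^tJ\chi(s,\tilde X)\,\mathrm{d}N_s+\int_0^tJ\psi(s,\tilde X)J^{-1}\,\mathrm{d}\bar V^2_s$ with $N$ a measurable functional of $\tilde X$; this representation is what actually determines the joint law from the law of $\tilde X$. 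Your easy direction and the observation that equality of $\mathbb{B}$-marginals reduces the claim to equality of the conditional kernels are both correct, but the hard direction needs this (or an equivalent) mechanism for the kernel directions of $\sigma$, and your incidental explanation of why deterministic initial data are needed also differs from the actual obstruction, which is that $\mathbb{P}\circ X_0^{-1}$ must be $\{0,1\}$-valued for the conditional laws $\mathbb{P}_{\omega}\circ\Pi_1(0)^{-1}$ to be almost surely constant.
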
In particular, we obtain the following corollary, which we interpret as a dual statement to the Yamada-Watanabe theorem.
\begin{kor}\label{Main Cor}
	Assume $b$, $\sigma$ and $W$ fulfill Assumption 1. Then for the stochastic differential equation above the existence of a strong solution and $\delta$-weak uniqueness imply $\delta$-pathwise uniqueness.
\end{kor}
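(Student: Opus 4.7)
The plan is to observe that the corollary is essentially a direct consequence of the two main theorems stated just above it, applied in sequence with $\mu = \delta_x$ for each $x \in H$. So no new ingredients are required; the work is simply to unwind the definitions of $\delta$-weak uniqueness and $\delta$-pathwise uniqueness and check that they line up correctly with the hypotheses of Theorems \ref{UL impl JUL} and \ref{Main Theorem I}.

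First, I would fix an arbitrary $x \in H$ and unpack $\delta$-weak uniqueness: by definition it says that weak uniqueness given $\delta_x$ holds for every $x \in H$. Applying Theorem \ref{UL impl JUL} with this particular $x$ then yields joint uniqueness in law given $\delta_x$. Since the existence of a strong solution is a hypothesis that does not depend on the initial distribution, I can now invoke Theorem \ref{Main Theorem I} with $\mu = \delta_x$ to conclude that pathwise uniqueness given $\delta_x$ holds. As $x \in H$ was arbitrary, this is precisely $\delta$-pathwise uniqueness.

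In writing this up, the only subtle step is to make sure the quantifier structure is stated explicitly: both $\delta$-weak uniqueness and $\delta$-pathwise uniqueness are really universal statements over $x \in H$, while Theorems \ref{UL impl JUL} and \ref{Main Theorem I} are stated for each fixed initial distribution, so the argument proceeds $x$ by $x$. I do not anticipate any real obstacle, since the corollary was clearly designed as a clean consequence of the two main theorems; it serves more as an interpretation of the combined result as a dual Yamada--Watanabe statement in the variational infinite-dimensional setting than as an independent result requiring additional analysis.
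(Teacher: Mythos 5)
Your proposal is correct and is exactly the argument the paper intends: Theorem \ref{UL impl JUL} upgrades weak uniqueness given $\delta_x$ to joint uniqueness in law given $\delta_x$, and Theorem \ref{Main Theorem I} with $\mu=\delta_x$ then yields pathwise uniqueness given $\delta_x$, for each $x\in H$ separately. The paper gives no separate proof of the corollary (it is stated as an immediate consequence), and your quantifier bookkeeping matches how the two theorems are formulated.
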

\noindent \textbf{Scheme of proof of Theorem \ref{Main Theorem I}}:\\The proof is similar to the one presented by Cherny for the finite-dimensional case in \cite[Thm. 3.2]{ChernyiPaper}.
Assume there exists a strong solution $F$ and joint uniqueness in law given $\mu$ holds for some $\mu \in \mathcal{M}^+_1(\mathcal{B}(H))$. We want to prove that every weak solution is given by the strong solution $F$. \\ \\The main idea is to consider the regular conditional distribution of $Z$ with respect to a suitable sub-$\sigma$-algebra of $\mathcal{F}$, namely the $\mathbb{P}$-completion of $\sigma(\xi_0,\bar{W})$, which in the proof will be called $\mathcal{G}_0^{\bar{W}}$. We will prove that the regular conditional distribution of $X$ with respect to the same $\sigma$-algebra coincides with that of $Z$. This step will heavily rely on the assumption on joint uniqueness in law given $\mu$. From here the definition of regular conditional distributions will imply 
$\mathbb{E}[g(X)|\mathcal{G}_0^{\bar{W}}] = g(Z)$ for any $\mathbb{R}$-valued, bounded, measurable $g$. By joint uniqueness in law given $\mu$, we will easily derive $g(X) = g(Z)$ for all $g$ as above and from there the result is immediate. The ``in particular"-statement of the theorem then follows directly, because joint uniqueness in law is, by definition, equivalent to joint uniqueness in law given $\mu$ for all $\mu \in \mathcal{M}^+_1(\mathcal{B}(H)$.\\ \\\textbf{Scheme of proof of Theorem \ref{UL impl JUL}}:\\ First of all we would like to point out that the proof would be straightforward, if we assumed the operator $\sigma(t,y)$ to be one-to-one for all $(t,y) \in [0,+\infty[\times \mathbb{B}$. Indeed, in this case $\sigma^{-1}(t,y)$ is well-defined on $\text{Im}\,\sigma(t,y)$ and for a weak solution $(X,W)$ we can, setting $N_t := X_t-X_0-\int_{0}^{t}b(s,X)\text{d}s$ $(= \int_{0}^{t}\sigma(s,X)\text{d}W_s$ $\mathbb{P}$-a.s.) for $t \geq 0$, consider the equation
\begin{align*}
\int_{0}^{t}J \sigma(s,X)^{-1}\text{d}N_s = \int_{0}^{t}J \sigma(s,X)^{-1}\text{d}\big(\int_{0}^{s}\sigma(r,X)\text{d}W_r\big) = \int_{0}^{t}J J^{-1}\text{d}\bar{W}_s = \bar{W}_t.
\end{align*}Here we used Proposition \ref{localize} (ii) from Appendix B to obtain the well-definedness of the second (hence also the first) term and to deduce the equality of the second and third integral. Thus we have expressed the Wiener process as a measurable functional of the solution $X$, which yields the desired statement. Although this simple reasoning does not work in the general case we consider, one will recognize the same idea in our proof below. For the general case we basically  follow the ideas of Theorem 3.1. in \cite{ChernyiPaper} and Theorem 1.6 in \cite{Qiao}. The majority of techniques used for the finite-dimensional case has to be modified for our infinite-dimensional variational approach. \\
\\Fix a deterministic initial condition $x \in H$ for which uniqueness in law given $\delta_x$ holds and for which Eq. (\ref{SDE}) has at least one weak solution. We prove that $\mathbb{P}\circ (X,\bar{W})^{-1}$ is uniquely determined by $\mathbb{P}\circ X^{-1}$ for every weak solution $(X,\bar{W})$ with $X_0 = x$ $\mathbb{P}$-a.s. Since we assume uniqueness in law given $\delta_x$, this implies the desired statement. Roughly speaking, we will express the Wiener process $\bar{W}$ as a functional of $X$ and a process independent of $X$. We will arrange the proof in the following steps:
\begin{enumerate}
	\item [(i)]Let $(X,W)$ be a weak solution on a stochastic basis $(\Omega, \mathcal{F}, (\mathcal{F}_t)_{t \geq 0},\mathbb{P})$ such that $X_0 = x$ $\mathbb{P}$-a.s., let $(\Omega', \mathcal{F}', (\mathcal{F}'_t)_{t \geq 0},\mathbb{P}')$ be a second stochastic basis and $W^1$, $W^2$ two independent $\mathbb{R}^{\infty}$-Wiener processes on it.
	Consider the product space $\tilde{\Omega} := \Omega \times \Omega'$ with $\tilde{\mathbb{P}} := \mathbb{P}\otimes \mathbb{P}'$ and the obvious $\sigma$-algebra and filtration such that we obtain a stochastic basis. We define the processes $\tilde{X}, \tilde{W}, \tilde{W}^1$ and $\tilde{W}^2$ on this product space in an obvious way via projections and check that $(\tilde{X},\tilde{W})$ is also a weak solution subject to the initial condition $ x$.
	\item[(ii)] For a linear subspace $V \subseteq U$, let $\text{pr}_V$ denote the orthogonal projection onto $V$. We define the processes $\phi$ and  $\psi: \mathbb{R}_+ \times \mathbb{B} \to \text{L}(U)$ via
	$\phi(t,y) := \text{pr}_{\text{ker}\sigma(t,y)^{\perp}} \text{ and }\psi(t,y) := \text{pr}_{\text{ker}\sigma(t,y)},$ which we will use to split up the integral 
	$\bar{\tilde{W}}_t = \int_{0}^{t}J \circ \text{id}_{U}\circ J^{-1}\text{d}\bar{\tilde{W}}_s$ later on. We further introduce the processes
	$$\bar{V}^1_t := \int_{0}^{t}J \phi(s,\tilde{X}) \,\text{d}\tilde{W}_s +\int_{0}^{t}J \psi(s,\tilde{X})\,\text{d}\tilde{W}_s^1 \text{ and }\bar{V}^2_t := \int_{0}^{t}J \phi(s,\tilde{X}) \,\text{d}\tilde{W}^2_s+\int_{0}^{t}J  \psi(s,\tilde{X})\,\text{d}\tilde{W}_s$$ and verify that these are independent Wiener processes on $\tilde{\Omega}$, for which we will need a Hilbert space version of Lévy's characterization of Brownian motion. Next we show that the pair $(\tilde{X},V^1)$ is a weak solution to Eq. (\ref{SDE}).
	\item[(iii)] In this crucial step we prove the independence of $\tilde{X}$ and $\bar{V}^2$. We will heavily use Lemma \ref{Long rcd lemma} as well as the assumption on uniqueness in law given $\delta_x$. More precisely, we will even show that $\tilde{X}$ is independent of $\mathcal{\tilde{F}}_0 \,\vee \,\sigma(\bar{V}^2_t|t \geq 0)$.
	\item[(iv)] We introduce the pseudo inverse of the diffusion term $\sigma$, i.e. we define $\chi(t,y): \text{Im}\,\sigma(t,y) \to \text{ker}\,\sigma(t,y)^{\perp}$ through $\chi(t,y) := \sigma(t,y)^{-1}$ for every $(t,y) \in [0,+\infty[\times\mathbb{B}$. Now we can, as mentioned above, split up the Wiener process in the following way:
	\begin{align*}
	\bar{\tilde{W}}_t &= \int_{0}^{t}J \chi(s,\tilde{X})\text{d}\bigg(\tilde{X}_s-x-\int_{0}^{s}b(r,\tilde{X})\text{d}r\bigg)+\int_{0}^{t}J \psi(s,\tilde{X}) J^{-1}\text{d}\bar{V}^2_s.
	\end{align*}Due to Step (iii) we know that $\tilde{X}$ is independent of $\bar{V}^2$. The first summand is a measurable functional of $\tilde{X}$. This will imply the result. The ``in particular''-statement of the theorem is then obvious, because $\delta$-(joint) uniqueness in law is by definition equivalent to (joint) uniqueness in law given $\delta_x$ for all $x \in H$.
\end{enumerate}
\begin{rem} With our techniques, Theorem \ref{UL impl JUL} cannot be generalized to non-deterministic initial conditions. Why is this so? Within the proof of Theorem \ref{UL impl JUL} we crucially use Lemma \ref{Long rcd lemma}, as outlined in Step (iii) above. The main point is to obtain --- using the notation of Lemma \ref{Long rcd lemma} --- that $\mathbb{P}_{\omega} \circ \Pi_1(0)^{-1} = \mathbb{P}\circ X_0^{-1} = \delta_x$ holds for  $\mathbb{P}$-a.a. $\omega \in \Omega$. We achieve this through
	\begin{align*}
	\{0,1\} \ni \mathbb{P}(X_0 \in A) = \mathbb{P}\big((X,\bar{W})\in \{\Pi_1(0) \in A\}\big) = \int_{\Omega}\mathbb{P}_{\omega}\big(\{\Pi_1(0)\in A\}\big)\mathbb{P}(\text{d}\omega),
	\end{align*} (c.f. (\ref{oneeleven})), which implies $\mathbb{P}_{\omega}\big(\{\Pi_1(0)\in A\}\big) = \mathbb{P}(X_0 \in A)$ for $\mathbb{P}$-a.a. $\omega \in \Omega$ for every $A \in \mathcal{B}(H)$.
	Unfortunately we cannot drop the condition $\mathbb{P}(X_0 \in A) \in \{0,1\}$ for each $A \in \mathcal{B}(H)$, because else we could not conclude that the integrand $\omega \mapsto \mathbb{P}_{\omega}\big(\{\Pi_1(0)\in A\}\big)$ is constant $\mathbb{P}$-a.s. for every $A \in \mathcal{B}(H)$. Hence a necessary and sufficient condition is that any weak solution $X$ fulfills $$\mathbb{P}\circ X_0^{-1}(A) \in \{0,1\} \text{ for all }A \in \mathcal{B}(H).$$ Due to the separability of $H$, this is equivalent to $\mathbb{P}\circ X_0^{-1} = \delta_x$ for some $x \in H$.
\end{rem}
\begin{rem}\label{Comp Ondrejat}
	In \cite{Ondrejat}, M. Ondrejat considers, among other statements, the assertions of both Theorem \ref{Main Theorem I} and Theorem \ref{UL impl JUL} in the setting of mild solutions to Banach space-valued stochastic differential equations (c.f. Theorem 1 and Theorem 4 in \cite{Ondrejat}, respectively). To retrieve the type of equations we consider, $X$ needs to be a separable Hilbert space. Further, necessarily $X=X_1$ in order to choose $S_t = \text{id}_X$, which is requisite to obtain our type of equations.  This shows that the situation in \cite{Ondrejat} does not contain our approach via a generalized Gelfand triple. Above that one notices that the drift and diffusion term of his type of equations do not depend on entire solutions paths, but only on its current time value. 
\end{rem}
\begin{rem}\label{CompQiao}In \cite{Qiao} H. Qiao states both main theorems of this paper for the same type of equations and within the same framework. Two rather short proofs are given, which mostly follow the same arguments as in Cherny's proofs in \cite{ChernyiPaper} for the finite-dimensional setting. In doing so, central technical issues arising from the infinite-dimensional framework are not properly adjusted to the proof of Theorem 1.6. in \cite{Qiao}. In particular this includes (assuming the notation of \cite{Qiao}) the proof of the independence of $V^1$ and $V^2$ and the calculation of the covariation of $JV^i$, $i \in \{1,2\}$ (note that the reference Proposition 3.13. given for this argument does not apply to the situation on p.372 in \cite{Qiao}, because the stochastic integrals $\phi(\cdot, \tilde{X}).\tilde{W}$ and $\psi(\cdot,\tilde{X}).\tilde{W}^1$ are not necessarily independent processes). Further the well-definedness of $V^1$ and $V^2$ is not discussed and there is no justification for the computations of stochastic integrals on p.373. The final conclusion of the proof is rather imprecise. Furthermore, the important technical preparations in \cite{Qiao}, namely Lemma 2.2. and Lemma 2.3., seem to rely heavily on arguments presented in \cite{RPaper} (c.f. Lemmas 2.4, 2.5 and 2.6 and the arguments inbetween). However, the situation there is, albeit quite similar in nature, technically a different one. Hence we believe it is valuable to present detailed proofs for these technical preparations as well for the main theorem.\\
	\\Concerning the proof of Theorem 1.7. in \cite{Qiao}, note that the situation considered there is less general then our setting in terms of the definition of a \textit{strong solution}. Below we give a proof considering this more general notion of a strong solution, which is also more precise and detailed.
\end{rem}
\section{Proofs of the main results}
The first subsection contains the main technical preparations for the proofs of our main results, which are presented in the second subsection.
\subsection{Preparations}
As before, let $H$ and $U$ be separable, infinite-dimensional Hilbert spaces. We start by recalling the definition and basic properties of regular conditional distributions, since these will be a key tool within the main proofs below.
\begin{dfn}\label{Def r.c.d.}
	Let $X$ be a random variable on $(\Omega, \mathcal{F},\mathbb{P})$ taking values in a Polish space $(E,\mathcal{B}(E))$ and $\mathcal{G} \subseteq \mathcal{F}$ a sub-$\sigma$-algebra. A family of probability measures $(Q_{\omega})_{\omega \in \Omega}$ on $\mathcal{B}(E)$ is called \textit{regular conditional distribution} (often abbreviated \textit{r.c.d.}) \textit{of} $X$ \textit{with respect to} $\mathcal{G}$, if
	\begin{enumerate}
		\item [(i)] $\omega \mapsto Q_{\omega}(A)$ is $\mathcal{G}$-measurable for each $A \in \mathcal{B}(E),$
		\item[(ii)] $\mathbb{E}[\mathds{1}_D \cdot \mathds{1}_{\{X \in A\}}]=\mathbb{E}[\mathds{1}_D \cdot Q_{\omega}(A)]$ holds for all $D \in \mathcal{G}$ and $ A \in \mathcal{B}(E)$. 
	\end{enumerate}
\end{dfn}
The statements of the following remark are well-known results. Thus we omit their proofs.
\begin{rem}\label{Rem rcd}
	\begin{enumerate}
		\item [(i)] For $A \in \mathcal{B}(E)$, $\omega \mapsto Q_{\omega}(A)$ is a version of $\mathbb{E}[\mathds{1}_{\{X \in A\}}|\mathcal{G}]$ with exception set possibly depending on $A$.
		\item[(ii)] For $X$, $\mathcal{G}$ and $E$ as above a unique regular conditional distribution exists.
		\item[(iii)] If $X$ itself is $\mathcal{G}$-measurable, then $\big(\delta_{X(\omega)}(\cdot)\big)_{\omega \in \Omega}$ is the (unique) r.c.d. of $X$ with respect to $\mathcal{G}$.
		\item [(iv)] Let $h: E \to \mathbb{R}$ be $\mathcal{B}(E)$-measurable. If $h \geq 0$ or $h(X) \in L^1(\Omega, \mathcal{F},\mathbb{P})$, then we have
		$\mathbb{E}_{{Q_{\omega}}}[h] = \mathbb{E}[h(X)|\mathcal{G}](\omega) \, \, \, \mathbb{P}\text{-a.s.},$ where $\mathbb{E}_{Q_{\omega}}[\cdot]$ denotes expectation with respect to $Q_{\omega}$ for fixed $\omega \in \Omega$.
	\end{enumerate}
\end{rem}For the next two lemmatas we fix the following framework. Let $(X,W)$ be a weak solution of Eq. (\ref{Equation}) on a stochastic basis $(\Omega, \mathcal{F},(\mathcal{F}_t)_{t \geq 0},\mathbb{P})$ with initial condition 
$X_0 = x$ $\mathbb{P}\text{-a.s.}$ for some $x \in H$ and let $(\mathbb{P}_{\omega})_{\omega \in \Omega}$ be the regular conditional distribution of the random variable $(X,\bar{W}):\Omega \to \big(\mathbb{B}\times \mathbb{W}_0, \mathcal{B}(\mathbb{B})\otimes \mathcal{B}(\mathbb{W}_0)\big)$ with respect to $\mathcal{F}_0$ (by the remark above such a r.c.d. exists, because $\mathbb{B}\times \mathbb{W}_0$ is a complete separable metric space when equipped with the product metric of $\rho$ and $\zeta$ as introduced in Subsection 1.2). For $\omega \in \Omega$ define a stochastic basis through
\begin{align*}
\bar{\Omega} := \mathbb{B}\times \mathbb{W}_0,\,
\bar{\mathcal{F}}^{\omega} := \overline{\mathcal{B}(\mathbb{B})\otimes \mathcal{B}(\mathbb{W}_0)}^{\mathbb{P}_{\omega}},\,
\bar{\mathcal{F}}^{\omega}_t := \bigcap_{\epsilon > 0}\sigma\big(\mathcal{B}_{t+\epsilon}(\mathbb{B})\otimes \mathcal{B}_{t+\epsilon}(\mathbb{W}_0),\mathcal{N}_{\omega}\big),
\end{align*}where $\mathcal{N}_{\omega} := \{N \in \bar{\mathcal{F}}^{\omega}|\mathbb{P}_{\omega}(N)=0\}$. Further $\Pi_1 : \bar{\Omega} \to \mathbb{B},\, \Pi_2: \bar{\Omega} \to \mathbb{W}_0$ denote the canonical projections on the first and second variable, respectively. Note that, as pointed out in Remark \ref{CompQiao} above, the following two statements are in spirit of Lemmata 2.4, 2.5 and 2.6 in \cite{RPaper} and also that Lemma \ref{Long rcd lemma} below is reminiscent to Lemma 3.3. in \cite{ChernyiPaper}. 
\begin{lem}\label{StilltoCome1}
	Let $\bar{Q}$ and $\bar{U}$ be as in Subsection 1.2. Then $\Pi_2$ is a $\bar{U}$-valued $(\bar{\mathcal{F}}_t^{\omega})$-$\bar{Q}$-Wiener process on $(\bar{\Omega},\bar{\mathcal{F}}^{\omega},(\bar{\mathcal{F}}_t^{\omega})_{t \geq 0},\mathbb{P}_{\omega})$ for $\mathbb{P}$-a.a. $\omega \in \Omega$.
\end{lem}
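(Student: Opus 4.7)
The plan is to verify the defining properties of a $\bar{Q}$-Wiener process for $\Pi_2$ under $\mathbb{P}_\omega$ one by one. Continuity of paths and $\Pi_2(0)=0$ are automatic from the definition of $\mathbb{W}_0$, and adaptedness to $(\bar{\mathcal{F}}_t^\omega)_{t\ge 0}$ is immediate, since $\Pi_2(t)$ is $\mathcal{B}_t(\mathbb{W}_0)$-measurable and $\bar{\mathcal{F}}_t^\omega$ contains $\sigma(\mathcal{B}_t(\mathbb{W}_0))$ on $\bar{\Omega}$. So the heart of the matter is to show that, for $\mathbb{P}$-a.a.\ $\omega$ and all $0\le s<t$, the increment $\Pi_2(t)-\Pi_2(s)$ is $\mathcal{N}(0,(t-s)\bar{Q})$-distributed and independent of $\bar{\mathcal{F}}_s^\omega$ under $\mathbb{P}_\omega$.

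The route I would take is to check this via characteristic functions. Fix rationals $0\le s<t$, an element $\eta$ of a countable dense subset of $\bar{U}$, and a set $A$ from a countable $\cap$-stable generator of $\mathcal{B}_s(\mathbb{B})\otimes \mathcal{B}_s(\mathbb{W}_0)$. Since $A\in \mathcal{B}_s(\mathbb{B})\otimes \mathcal{B}_s(\mathbb{W}_0)$ makes $\{(X,\bar{W})\in A\}$ an $\mathcal{F}_s$-measurable event and $\bar{W}_t-\bar{W}_s$ is independent of $\mathcal{F}_s$ with Fourier transform $\eta\mapsto\exp\!\big({-\tfrac{t-s}{2}\langle \bar{Q}\eta,\eta\rangle_{\bar{U}}}\big)$, we obtain for every $F\in \mathcal{F}_0$, using Remark \ref{Rem rcd}(iv),
\begin{align*}
\int_F \mathbb{E}_{\mathbb{P}_\omega}\!\big[\mathds{1}_A\, e^{i\langle \Pi_2(t)-\Pi_2(s),\eta\rangle_{\bar{U}}}\big]\,d\mathbb{P}(\omega)
&= \mathbb{E}\!\big[\mathds{1}_F\, \mathds{1}_{\{(X,\bar{W})\in A\}}\, e^{i\langle \bar{W}_t-\bar{W}_s,\eta\rangle_{\bar{U}}}\big]\\
&= \mathbb{P}\!\big(F\cap\{(X,\bar{W})\in A\}\big)\, e^{-\frac{t-s}{2}\langle \bar{Q}\eta,\eta\rangle_{\bar{U}}}\\
&= \int_F \mathbb{P}_\omega(A)\, e^{-\frac{t-s}{2}\langle \bar{Q}\eta,\eta\rangle_{\bar{U}}}\, d\mathbb{P}(\omega).
\end{align*}
As both integrands are $\mathcal{F}_0$-measurable in $\omega$, varying $F$ over $\mathcal{F}_0$ forces pointwise equality outside a $\mathbb{P}$-null set depending on $(s,t,A,\eta)$. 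Taking the countable union of these null sets and exploiting continuity of characteristic functions in $\eta$ together with a $\pi$--$\lambda$ argument in $A$ yields a single exceptional set $N\in\mathcal{F}$ with $\mathbb{P}(N)=0$ outside of which, for all rational $0\le s<t$, $\Pi_2(t)-\Pi_2(s)$ is $\mathcal{N}(0,(t-s)\bar{Q})$-distributed under $\mathbb{P}_\omega$ and independent of $\mathcal{B}_s(\mathbb{B})\otimes \mathcal{B}_s(\mathbb{W}_0)$.

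Path continuity of $\Pi_2$ then extends these two properties from rational times to all $0\le s<t$ by bounded convergence. Passing from the raw filtration to its $\mathbb{P}_\omega$-completion and right-continuous hull $\bar{\mathcal{F}}_t^\omega$ is standard: augmenting by $\mathbb{P}_\omega$-null sets preserves the already-established independence, and right-continuity is absorbed by approximating $s$ from above along rationals and using continuity of $\Pi_2$ once more. Thus $\Pi_2$ satisfies the definition of a $\bar{U}$-valued $(\bar{\mathcal{F}}_t^\omega)$-$\bar{Q}$-Wiener process under $\mathbb{P}_\omega$ for every $\omega\notin N$. The main obstacle I anticipate is the measurability/null-set bookkeeping---getting one $\mathbb{P}$-null set good for all times, all generators, and all $\eta$ simultaneously---which is handled by separability of $\bar{U}$ and countable generation of the relevant $\sigma$-algebras; everything else is a direct translation of the independence of $\bar{W}$'s increments from the past of the original stochastic basis into a conditional statement via the defining property of the r.c.d.
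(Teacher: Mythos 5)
Your proposal is correct and follows essentially the same route as the paper's proof: conditioning the characteristic function of the increment against sets from countable generators of $\mathcal{B}_s(\mathbb{B})\otimes\mathcal{B}_s(\mathbb{W}_0)$, using Remark \ref{Rem rcd} (iv) and the independence of $\bar{W}_t-\bar{W}_s$ from $\mathcal{F}_s$ to factorize, then collecting countably many null sets and extending to real times and to the completed right-continuous filtration by path continuity (the paper isolates this last step as Lemma \ref{Independ. of right-c. filtr}). The only cosmetic difference is that you test against $F\in\mathcal{F}_0$ and write the Gaussian characteristic function explicitly, whereas the paper tests against $A_0\in\mathcal{F}$ and identifies the law of the increment with that of $\bar{W}_t-\bar{W}_s$ via uniqueness of the Fourier transform.
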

\begin{proof}
	Since $\Pi_2: \mathbb{B}\times \mathbb{W}_0 \to \mathbb{W}_0$ and due to the definition of $\mathbb{W}_0$, the paths of $\Pi_2$ trivially start in zero and are continuous. The $(\bar{\mathcal{F}}^{\omega}_t)$-adaptedness of $(\Pi_2(t))_{t \geq 0}$ is obvious for every $\omega \in \Omega$, so it remains to verify that there exists $N_0 \in \mathcal{F}$ with $\mathbb{P}(N_0)=0$ such that for $\omega \in N_0^c$ we have \begin{equation}\label{indepone}
	\Pi_2(t)-\Pi_2(s) \text{ is } \mathbb{P}_{\omega}\text{ -independent of } \bar{\mathcal{F}}^{\omega}_s \text{ and } \mathbb{P}\circ(\bar{W}_t-\bar{W}_s)^{-1} = \mathbb{P}_{\omega}\circ (\Pi_2(t)-\Pi_2(s))^{-1}
	\end{equation}for all $s, t \in \mathbb{Q}$ with $0 \leq s < t$, because then the assertion follows by an approximation of arbitrary $s,t \in \mathbb{R}$ through suitable $s_n, t_n \in \mathbb{Q}$.
	To prove (\ref{indepone}) we fix $0 \leq s < t$ with $s,t \in \mathbb{Q}$, choose $A_1 \in \mathcal{B}_s(\mathbb{B}), A_2 \in \mathcal{B}_s(\mathbb{W}_0), A_0 \in \mathcal{F}$ arbitrary and obtain for $y \in \bar{U}$:
	\begin{align*}
	&\int_{A_0}\mathbb{E}_{\mathbb{P}_{\omega}}\big[\text{exp}(i\langle y,\Pi_2(t)-\Pi_2(s)\rangle_{\bar{U}})\mathds{1}_{A_1\times A_2}\big]\mathbb{P}(\text{d}\omega) = \int_{A_0}\mathbb{E}_{\mathbb{P}}\big[\text{exp}(i\langle y,\bar{W}_t-\bar{W}_s\rangle _{\bar{U}})\mathds{1}_{A_1}(X)\mathds{1}_{A_2}(\bar{W})|\mathcal{F}_0\big]\mathbb{P}(\text{d}\omega)\\&=\int_{A_0}\mathbb{E}_{\mathbb{P}}\big[\text{exp}(i\langle y,\bar{W}_t-\bar{W}_s\rangle _{\bar{U}})\big]\mathbb{E}_{\mathbb{P}}\big[\mathds{1}_{A_1}(X)\mathds{1}_{A_2}(\bar{W})|\mathcal{F}_0\big]\mathbb{P}(\text{d}\omega)\\&=\int_{A_0}\mathbb{E}_{\mathbb{P}}\big[\text{exp}(i\langle y,\bar{W}_t-\bar{W}_s\rangle _{\bar{U}})\big]\mathbb{P}_{\omega}(A_1\times A_2)\mathbb{P}(\text{d}\omega).
	\end{align*}Above we used Remark \ref{Rem rcd} (iv) for the first and last equality and the independence of $\bar{W}_t-\bar{W}_s$ and $\mathcal{F}_s$ in the second equation. By varying $A_0$ in $\mathcal{F}$, we obtain $\mathbb{P}$-a.s.:
	\begin{align}\label{al}
	\notag	\mathbb{E}_{\mathbb{P}_{\omega}}\big[\text{exp}(i\langle y,\Pi_2(t)-\Pi_2(s)\rangle_{\bar{U}})\mathds{1}_{A_1\times A_2}\big] &= \mathbb{E}_{\mathbb{P}}\big[\text{exp}(i\langle y,\bar{W}_t-\bar{W}_s\rangle _{\bar{U}})\big]\mathbb{P}_{\omega}(A_1\times A_2) \\&= \mathbb{E}_{\mathbb{P}_{\omega}}[\text{exp}(i\langle y,\Pi_2(t)-\Pi_2(s)\rangle _{\bar{U}})]\mathbb{P}_{\omega}(A_1\times A_2).\end{align}The last equality follows by the independence of $\bar{W}_t-\bar{W}_s$ from $\mathcal{F}_0$ and again Remark \ref{Rem rcd} (iv). In particular, choosing $A_1 = \mathbb{B}$ and $A_2 = \mathbb{W}_0$, we obtain for all $y$ in a countable, dense subset of $\bar{U}$:
	$$\mathbb{E}_{\mathbb{P}_{\omega}}\big[\text{exp}(i\langle y,\Pi_2(t)-\Pi_2(s)\rangle_{\bar{U}})\big] = \mathbb{E}_{\mathbb{P}}\big[\text{exp}(i\langle y,\bar{W}_t-\bar{W}_s\rangle _{\bar{U}})\big]\text{ for }\mathbb{P}\text{ -a.a. }\omega \in \Omega,$$ which by the uniqueness of the Fourier-transform implies that for $\mathbb{P}$-a.a. $\omega \in \Omega $ $$\mathbb{P}_{\omega}\circ (\Pi_2(t)-\Pi_2(s))^{-1} = \mathbb{P}\circ (\bar{W}_t-\bar{W}_s)^{-1} \text{ for all }s,t \in \mathbb{Q} \text{ as above. }$$ Further note that the exception set in (\ref{al}) can, for fixed $0 \leq s < t$, be chosen independently of $A_1, A_2$, because both $\mathcal{B}_s(\mathbb{B})$ and $\mathcal{B}_s(\mathbb{W}_0)$ are countably generated. Then the usual monotone class argument, together with Lemma \ref{Independ. of right-c. filtr}, shows that $\Pi_2(t)-\Pi_2(s)$ is $\mathbb{P}_{\omega}$-independent of $\bar{\mathcal{F}}^{\omega}_s$ for $\mathbb{P}$-a.a. $\omega \in \Omega$ for all $s,t$ as above.
\end{proof}
The following statement will be crucial for the proof of Theorem \ref{UL impl JUL}. A similar result for the finite-dimensional setting is a main tool for Cherniy's result in \cite{ChernyiPaper} (c.f. Lemma 3.3. therein). Due to its importance for our main proof below, we decided to give a detailed proof of this lemma for our infinite-dimensional framework. Below $\hat{\Pi}_2$ denotes the formal standard $\mathbb{R}^{\infty}$-Wiener process associated to $\Pi_2$.
\begin{lem}\label{Long rcd lemma}
	$(\Pi_1,\hat{\Pi}_2)$ is a weak solution to Eq. (\ref{Equation}) on $(\bar{\Omega},\bar{\mathcal{F}}^{\omega},(\bar{\mathcal{F}}_t^{\omega})_{t \geq 0},\mathbb{P}_{\omega})$ with $\mathbb{P}_{\omega}\circ \Pi_1(0)^{-1}= \delta_x= \mathbb{P}\circ X_0^{-1}$ for $\mathbb{P}$-a.a. $\omega \in \Omega$.
\end{lem}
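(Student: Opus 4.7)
The plan is to verify the defining properties of a weak solution from Definition \ref{weak sol} for $(\Pi_1,\hat{\Pi}_2)$ under $\mathbb{P}_\omega$ one at a time, exploiting the disintegration $\mathbb{P}\circ(X,\bar{W})^{-1}=\int_\Omega \mathbb{P}_\omega\,\mathbb{P}(\mathrm{d}\omega)$, which lets me transfer any $\mathbb{P}\circ(X,\bar{W})^{-1}$-full-measure event on $\bar{\Omega}$ into a $\mathbb{P}_\omega$-full-measure event for $\mathbb{P}$-a.a. $\omega$. First I would handle the easy items. The initial distribution follows from the $\mathcal{F}_0$-measurability of $X_0$: by Remark \ref{Rem rcd}(iii) (applied to the $\mathcal{F}_0$-measurable map $\omega\mapsto X_0(\omega)$), for each $A\in\mathcal{B}(H)$ we have $\mathbb{P}_\omega(\Pi_1(0)\in A)=\mathbf{1}_A(X_0(\omega))$ $\mathbb{P}$-a.s., and since $X_0=x$ $\mathbb{P}$-a.s.\ this yields $\mathbb{P}_\omega\circ\Pi_1(0)^{-1}=\delta_x$ outside a $\mathbb{P}$-null set. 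The $(\bar{\mathcal{F}}^\omega_t)$-adaptedness of $\Pi_1$ with paths in $\mathbb{B}$ is immediate from the construction of $\bar{\Omega}$ and $\bar{\mathcal{F}}^\omega_t$, while Lemma \ref{StilltoCome1} supplies that $\hat{\Pi}_2$ is a standard $(\bar{\mathcal{F}}^\omega_t)$-$\mathbb{R}^\infty$-Wiener process. The integrability condition (i) of Definition \ref{weak sol} defines a measurable subset of $\mathbb{B}$ which has probability $1$ under $\mathbb{P}\circ X^{-1}$ because $(X,W)$ is a weak solution; disintegration pushes this through to $\mathbb{P}_\omega$-a.s.\ for $\mathbb{P}$-a.a. $\omega$.

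The core of the argument is condition (ii), the integral equation itself, which is delicate because the stochastic integral is defined only up to $\mathbb{P}$-null sets and a priori depends on the underlying filtered probability space. The key step is to produce a single $\mathcal{B}(\mathbb{B})\otimes\mathcal{B}(\mathbb{W}_0)/\mathcal{B}(C(\mathbb{R}_+;H))$-measurable map $I:\mathbb{B}\times\mathbb{W}_0\to C(\mathbb{R}_+;H)$ (defined outside a $\mathbb{P}\circ(X,\bar{W})^{-1}$-null set) with the twin property
\begin{align*}
I(X,\bar{W})_t &= \int_0^t\sigma(s,X)\,\mathrm{d}W_s\quad\mathbb{P}\text{-a.s.,}\\
I(\Pi_1,\Pi_2)_t &= \int_0^t\sigma(s,\Pi_1)\,\mathrm{d}\hat{\Pi}_2(s)\quad\mathbb{P}_\omega\text{-a.s., for }\mathbb{P}\text{-a.a. }\omega.
\end{align*}
Such an $I$ is obtained in the standard way by approximating $\sigma(\cdot,\omega_1)\circ J^{-1}$ pathwise (measurably in $\omega_1$) by elementary processes, defining the corresponding stochastic integrals as explicit measurable functionals of $(\omega_1,\omega_2)$, and extracting an almost-sure limit along a subsequence whose choice can be made measurably. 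Crucially, the same approximation scheme is used on both stochastic bases, and the uniqueness (and linearity) of the Itô integral on each basis forces the limit to coincide with the genuine stochastic integral there.

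With $I$ in hand, I consider the measurable set
\begin{align*}
A:=\Big\{(\omega_1,\omega_2)\in\mathbb{B}\times\mathbb{W}_0\,\Big|\,\omega_1(t)=\omega_1(0)+\int_0^t b(s,\omega_1)\,\mathrm{d}s + I(\omega_1,\omega_2)_t\text{ for all }t\geq 0\Big\},
\end{align*}
which by the weak-solution property of $(X,W)$ and the first identity above satisfies $\mathbb{P}\circ(X,\bar{W})^{-1}(A)=1$. Disintegration then gives $\mathbb{P}_\omega(A)=1$ for $\mathbb{P}$-a.a. $\omega$, and invoking the second identity for $I$ under $\mathbb{P}_\omega$ produces the desired integral equation for $(\Pi_1,\hat{\Pi}_2)$. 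Combined with the preparations of the first paragraph, this finishes the proof.

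The main obstacle is precisely the construction of the joint pathwise representation $I$: one must show that the step-process approximations can be chosen so as to be measurable functionals of the input path $\omega_1$ alone, with limits existing on a common full-measure set that works simultaneously for the original basis and for $(\bar{\Omega},\bar{\mathcal{F}}^\omega,(\bar{\mathcal{F}}^\omega_t),\mathbb{P}_\omega)$. Once this measurability-and-uniqueness bookkeeping is in place, the disintegration argument is routine; without it, one cannot meaningfully compare the two stochastic integrals living on different probability spaces. The $\bar{Q}$-Wiener framework (integration against $\bar{W}_\cdot$ via $J^{-1}$) requires a little extra care here, but the identification rests on the very same Riemann-sum approximation that defines the stochastic integral in \cite[Section 2.5]{RSPDE}.
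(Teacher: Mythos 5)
Your strategy is essentially the one the paper follows: treat adaptedness, the Wiener property of $\hat{\Pi}_2$ (via Lemma \ref{StilltoCome1}), and the integrability condition by pushing full-measure path sets through the disintegration $\mathbb{P}\circ(X,\bar{W})^{-1}=\int_\Omega\mathbb{P}_\omega\,\mathbb{P}(\mathrm{d}\omega)$, and then reduce the integral identity to a jointly measurable, pathwise representation of the stochastic integral built from elementary approximants (your set $A$ is exactly the paper's set $\bar{B}_t$). Your treatment of the initial law via Remark \ref{Rem rcd}(iii) applied to the $\mathcal{F}_0$-measurable $X_0$ is a harmless variant of the paper's $0$--$1$ argument in (\ref{oneeleven}) and is fine since $X_0=x$ a.s.

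There is, however, a genuine gap hiding in ``Such an $I$ is obtained in the standard way.'' Two points there are not routine and constitute the bulk of the paper's proof. First, the claim that ``the same approximation scheme \dots forces the limit to coincide with the genuine stochastic integral'' on $(\bar{\Omega},\mathbb{P}_\omega)$ does not follow from uniqueness and linearity of the It\^{o} integral: to identify the $\mathbb{P}_\omega$-a.s.\ limit of the elementary integrals with $\int_0^t\sigma(s,\Pi_1)\,\mathrm{d}\hat{\Pi}_2(s)$ you need $\mathbb{E}_{\mathbb{P}_\omega}\big[\int_0^T\|\sigma(s,\Pi_1)-p_{n_k}(s,\Pi_1)\|^2_{\mathrm{L}_2(U,H)}\mathrm{d}s\big]\to 0$ for $\mathbb{P}$-a.a.\ $\omega$, and this is obtained in the paper by viewing that quantity as a conditional expectation given $\mathcal{F}_0$, using the $L^1$-contraction property and passing to a subsequence (see (\ref{Equatio})). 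Without this step the a.s.\ limit you extract under $\mathbb{P}$ has no identified meaning under $\mathbb{P}_\omega$. Second, the $L^2(\mathbb{P}^X)$-approximation by elementary processes that underlies the whole construction requires $\mathbb{E}_{\mathbb{P}^X}\big[\int_0^T\|\sigma(s,\cdot)\|^2_{\mathrm{L}_2(U,H)}\mathrm{d}s\big]<\infty$, which is not part of the hypotheses; Definition \ref{weak sol} only gives pathwise local square-integrability. The paper therefore first proves the statement for $\mathds{1}_{]0,\tau^T_k]}\sigma$ with the stopping times $\tau^T_k(y)=\inf\{s\mid\int_0^s\|\sigma(r,y)\|^2\mathrm{d}r>k\}\wedge T$ (which are functionals of the path $y$ alone, so that $\tau^T_k(\Pi_1)\to T$ $\mathbb{P}_\omega$-a.s.\ can again be transferred by disintegration) and then lets $k\to\infty$. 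Your outline omits this localization entirely, and without it the map $I$ you posit cannot be constructed by the scheme you describe.
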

The proof is split into two steps. We work with a sequence of elementary processes $(p_n)_{n \in \mathbb{N}}$, which approximates $\sigma$ in $L^2(\text{L}_2(U,H);\mathbb{P}^X)$, because only for elementary integrands we have a pathwise definition of the stochastic integral. This pathwise definition is necessary in order to allow us to ``put $\omega$ in the integrand as well as in the integrator" and thereby ``put $\Pi_1$ and $\Pi_2$ in the right places". This step becomes apparent in (\ref{goodeq}) and in the definition of the set $\bar{B}_t$.
\begin{proof}
	Of course $\Pi_1$ is $\mathbb{B}$-valued and $(\bar{\mathcal{F}}_t^{\omega})$-adapted for every $\omega \in \Omega$. By the previous lemma, $\Pi_2$ is an $(\bar{\mathcal{F}}_t^{\omega})$-$\bar{Q}$-Wiener process on $(\bar{\Omega},\bar{\mathcal{F}}^{\omega},(\bar{\mathcal{F}}_t^{\omega})_{t \geq 0},\mathbb{P}_{\omega})$ for $\mathbb{P}$-a.a. $\omega \in \Omega$. Let $\hat{\Pi}_2$ be the associated standard $\mathbb{R}^{\infty}$-Wiener process. Concerning integrability, fix $t \geq 0$ and note that
	$$\bar{A}_t := \bigg\{(y,w) \in \mathbb{B}\times \mathbb{W}_0\,\bigg|\int_{0}^{t}||b(s,y)||_E\,\text{d}s + \int_{0}^{t}||\sigma(s,y)||^2_{\text{L}_2(U,H)}\text{d}s < +\infty\bigg\}$$ is contained in $\mathcal{B}(\mathbb{B})\otimes \mathcal{B}(\mathbb{W}_0)$ and that $1=\mathbb{P}\big((X,\bar{W}) \in \bar{A}_t\big)=\int_{\Omega}\mathbb{P}_{\omega}(\bar{A}_t)\,\mathbb{P(\text{d}\omega)}$ holds, because $(X,W)$ is a weak solution to $(\ref{Equation})$. Consequently $\mathbb{P}_{\omega} (\bar{A}_t)=1$ for $\mathbb{P}$-a.a. $\omega \in \Omega$, which implies \begin{equation*}
	\int_{0}^{t}||b(s,\Pi_1)||_E\,\text{d}s + \int_{0}^{t}||\sigma(s,\Pi_1)||^2_{\text{L}_2(U,H)}\text{d}s < +\infty \,\,\mathbb{P}_{\omega}\text{-a.s.}
	\end{equation*} for $\mathbb{P}$-a.a. $\omega \in \Omega$ and all zero sets can obviously be chosen independently of $t$.  Hence we only need to verify the following: For $T > 0$ there exists a $\mathbb{P}$-zero-set $N_2 \in \mathcal{F}$ such that for all $\omega \in N_2^c$: 
	\begin{enumerate}
		\item [(I)] $\Pi_1(t)=\Pi_1(0)+\int_{0}^{t}b(s,\Pi_1)\text{d}s+\int_{0}^{t}\sigma(s,\Pi_1)\text{d}\hat{\Pi}_2(s)\,\,\,\,\mathbb{P}_{\omega}$-a.s. for all $t \in [0,T]$ on $E$;
		\item[(II)] $\mathbb{P}_{\omega}\circ \Pi_1(0)^{-1}=\delta_x.$
	\end{enumerate}	
	We prove assertion (I) in two steps.
	\begin{enumerate}
		\item [(i)]Here we assume \begin{equation}\label{First assump}
		\mathbb{E}_{\mathbb{P}^X}\bigg[\int_{0}^{T}||\sigma(s,\cdot)||^2_{\text{L}_2(U,H)}\text{d}s\bigg]<+\infty \text{ for all }T \geq 0,
		\end{equation}where $\mathbb{P}^X$ denotes the distribution of $X:\Omega \to \mathbb{B}$. Now fix $T > 0$. Since $\text{L}_2(U,H)$ is a separable Hilbert space and $\sigma:\mathbb{R}_+\times \mathbb{B} \to \text{L}_2(U,H)$ is measurable and $(\mathcal{B}^+_t(\mathbb{B}))$-adapted, by \cite[Lemma 2.5.]{RPaper} we obtain the existence of a sequence $(p_n)_{n \in \mathbb{N}}$ of $\text{L}_2(U,H)$-valued $(\mathcal{B}_t^+(\mathbb{B}))$-predictable, elementary processes on $[0,T]\times \mathbb{B}$ such that 
		$$\mathbb{E}_{\mathbb{P}^X}\bigg[\int_{0}^{T}||\sigma(s,y)-p_n(s,y)||^2_{\text{L}_2(U,H)}\text{d}s \bigg]\underset{n \to \infty}{\to} 0,$$i.e. in particular each $p_n$ is of the form
		$p_n(s,y) = \sum_{m=0}^{j_n-1}\Phi_m^n(y)\mathds{1}_{]t^n_m,t^n_{m+1}]}(s), \,(s,y) \in [0,T]\times \mathbb{B},$ where $\Phi_m^n: \mathbb{B} \to \text{L}_2(U,H)$ is strongly $\mathcal{B}_{t_m}^+(\mathbb{B})$-measurable for every $m \in \{0,...,j_n-1\}$, has finite image and $0 = t^n_0 < ...<t^n_{j_n} = T$ is a finite partition of $[0,T]$. We immediately observe
		\begin{equation}\label{onetwo}
		\mathbb{E}\bigg[\int_{0}^{T}||\sigma(s,X)-p_n(s,X)||^2_{\text{L}_2(U,H)}\text{d}s \bigg]\underset{n \to \infty}{\to} 0
		\end{equation} and that $p_n(\cdot,X)$ is still elementary and $(\mathcal{F}_t)$-predictable. Thus $p_n(\cdot,X)\circ J^{-1} \in \Lambda^2_T(\bar{W},\bar{U},H,\mathcal{P}_T)$ and by the isometry stated in Proposition \ref{Isometry}, (\ref{onetwo}) yields 
		$$\int_{0}^{\cdot}p_n(s,X)\text{d}W_s \underset{n \to \infty}{\to}\int_{0}^{\cdot}\sigma(s,X)\text{d}W_s \text{ in } \mathcal{M}_c^2(T;H),$$ thus in particular \begin{equation}\label{align1}
		\mathbb{E}\bigg[\underset{t\in [0,T]}{\text{sup}}\big|\big|\int_{0}^{t}p_n(s,X)\text{d}W_s-\int_{0}^{t}\sigma(s,X)\text{d}W_s\big|\big|^2_H\bigg]\underset{n \to \infty}{\to} 0.
		\end{equation}
		Since conditional expectation is an $L^p$-contraction for $p \geq 1$, we obtain
		\begin{align*}
		&\bigg|\bigg|\mathbb{E}\bigg[\int_{0}^{T}||\sigma(s,X)-p_n(s,X)||^2_{\text{L}_2(U,H)}\text{d}s\big{|}\mathcal{F}_0\bigg]\bigg|\bigg|_{L^1(\Omega)}\leq \mathbb{E}\bigg[\int_{0}^{T}||\sigma(s,X)-p_n(s,X)||^2_{\text{L}_2(U,H)}\text{d}s\bigg].
		\end{align*}Hence, by (\ref{onetwo}), there exists a subsequence $(n_k)_{k \in \mathbb{N}}$ such that
		$$\mathbb{E}\bigg[\int_{0}^{T}||\sigma(s,X)-p_{n_k}(s,X)||^2_{\text{L}_2(U,H)}\text{d}s\big{|}\mathcal{F}_0\bigg] \underset{k \to \infty}{\to} 0\,\, \mathbb{P}\text{-a.s.}$$ and thereby even 
		$\mathbb{E}\bigg[\int_{0}^{t}||\sigma(s,X)-p_{n_k}(s,X)||^2_{\text{L}_2(U,H)}\text{d}s\big{|}\mathcal{F}_0\bigg] \underset{k \to \infty}{\to} 0\,\, \mathbb{P}\text{-a.s. for all }t \in [0,T].$ As a consequence, by Remark \ref{Rem rcd} (iv), we obtain that for every $t \in [0,T]$ we have
		\begin{equation}\label{Equatio}
		\mathbb{E}_{\mathbb{P}_{\omega}}\bigg[\int_{0}^{t}||\sigma(s,\Pi_1)-p_{n_k}(s,\Pi_1)||^2_{\text{L}_2(U,H)}\text{d}s\bigg] \underset{k \to \infty}{\to}0 \,\,\, \mathbb{P}\text{-a.s.}
		\end{equation}Applying the isometry for stochastic integrals once more (this time for the Wiener process $\hat{\Pi}_2$ and the admissible integrands $p_{n_k}(\cdot,\Pi_1)$ and $\sigma(s,\Pi_1)$) we conclude by (\ref{Equatio}): For every $t \in [0,T]$, for  $\mathbb{P}$-a.a. $\omega \in \Omega$ we have
		\begin{equation}\label{onethree}
		\int_{0}^{t}p_{n_k}(s,\Pi_1)\text{d}\hat{\Pi}_2(s) \underset{k \to \infty}{\to}\int_{0}^{t}\sigma(s,\Pi_1)\text{d}\hat{\Pi}_2(s) \text{ in }L^2(\bar{\Omega},\mathbb{P}_{\omega};H).
		\end{equation}\\
		Now we consider (\ref{align1}) only along the same subsequence $(n_k)_{k \in \mathbb{N}}$. Then there is a further subsequence $(n_{k_l})_{l \in \mathbb{N}}$, for which for every $t \in [0,T]$
		\begin{equation}\label{oneone}
		\int_{0}^{t}p_{n_{k_l}}(s,X)\text{d}W_s \underset{l \to \infty}{\to} \int_{0}^{t}\sigma(s,X)\text{d}W_s = X_t-X_0-\int_{0}^{t}b(s,X)\text{d}s
		\end{equation} $\mathbb{P}$-a.s. Note that since $(p_n)_{n \in \mathbb{N}}$ is a sequence of elementary processes, the stochastic integral on the left hand side in (\ref{oneone}) is defined pathwise, i.e.
		\begin{align*}
		\bigg(\int_{0}^{t}p_{n_{k_l}}(s,X)\text{d}W_s\bigg)(\omega) = \sum_{m=0}^{j_{n_{k_l}}-1}\Phi_m^{n_{k_l}}(X(\omega)) J^{-1}\big(\bar{W}_{t^{n_{k_l}}_{m+1}\wedge t}(w)-\bar{W}_{t^{n_{k_l}}_{m}\wedge t}(w)\big).
		\end{align*}For $t \in [0,T]$ the set $$\bar{B}_t := \bigg\{(y,w) \in \mathbb{B}\times \mathbb{W}_0|\sum_{m=0}^{j_{n_{k_l}}-1}\Phi_m^{n_{k_l}}(y)\circ J^{-1}(w_{t^{n_{k_l}}_{m+1}\wedge t}-w_{t^{n_{k_l}}_{m}\wedge t}) \underset{l \to \infty}{\to}y_t-y_0-\int_{0}^{t}b(s,y)\text{d}s\bigg\},$$ is obviously contained in  $\mathcal{B}(\mathbb{B})\otimes \mathcal{B}(\mathbb{W}_0)$ and (\ref{onethree}) implies  $\mathbb{P}\big(\{(X,\bar{W}) \in \bar{B}_t^c \}\big)=0$. For every $t \in [0,T]$, we conclude
		$0=\mathbb{P}\big((X,\bar{W})\in \bar{B}_t^c\big)=\int_{\Omega}\mathbb{P}_{\omega}(\bar{B}_t^c)\,\mathbb{P}(\text{d}\omega),$ which gives $\mathbb{P}_{\omega}(\bar{B}_t^c)=0$ $\mathbb{P}$-a.s. and thus in turn for $\mathbb{P}$-a.a. $\omega \in \Omega$:
		\begin{equation}\label{goodeq}
		\int_{0}^{t}p_{n_{k_l}}(s,\Pi_1)\text{d}\hat{\Pi}_2 \underset{l \to \infty}{\to} \Pi_1(t)-\Pi_1(0)-\int_0^{t}b(s,\Pi_1)\text{d}s \,\,\mathbb{P}_{\omega}\text{-a.s.}
		\end{equation}
		But now (\ref{onethree}) especially holds along the same subsequence $(n_{k_l})_{l \in \mathbb{N}}$. Choosing a further subsequence (possibly depending on $\omega$ and $t$) for which the convergence in (\ref{onethree}) holds $\mathbb{P}_{\omega}$-a.s., we conclude together with (\ref{goodeq}): For every $t \in [0,T]$ there is $N_t \in \mathcal{F}$ with $\mathbb{P}(N_t)=0$ such that for all $\omega \in N_t^c$
		\begin{equation*}
		\Pi_1(t)=\Pi_1(0)+\int_{0}^{t}b(s,\Pi_1)\text{d}s+\int_{0}^{t}\sigma(s,\Pi_1)\text{d}\hat{\Pi}_2(s) \,\,\,\,\mathbb{P}_{\omega}\text{-a.s.}
		\end{equation*} By the continuity in $E$ of all terms, the zero set $N_t$ can be chosen independently of $t \in [0,T]$. Hence this case is settled.
		\item[(ii)] In the second step we only assume 
		\begin{equation}\label{Second assump}
		\int_{0}^{T}||\sigma(s,y)||^2_{\text{L}_2(U,H)}\text{d}s < +\infty \,\,\, \mathbb{P}^X\text{-a.s.} \text{ for all }T \geq 0,
		\end{equation}
		which is automatically true, since $\int_{0}^{T}||\sigma(s,X)||^2_{\text{L}_2(U,H)}\text{d}s < +\infty \,\,\, \mathbb{P}\text{-a.s.}$ by assumption for all $T \geq 0$. Fix $T > 0$. We work with the following maps for $k \in \mathbb{N}$.
		$$\tau^T_k:\mathbb{B} \to \mathbb{R}_+,\, \tau^T_k(y) := \text{inf}\big\{s \geq 0\big|\int_{0}^{s}||\sigma(r,y)||^2_{\text{L}_2(U,H)}\text{d}r > k \big\} \wedge T, $$which, by Fubini's theorem, is an $(\mathcal{B}_t^+(\mathbb{B}))$-stopping time for every $k \in \mathbb{N}$. We continue with the following observations.
		\begin{enumerate}
			\item[(a)] For every $k \in \mathbb{N}$ and $T >0$, (\ref{First assump}) is fulfilled when one replaces $\sigma$ by $\mathds{1}_{]0,\tau^T_k]}\sigma$ and  $\mathds{1}_{]0,\tau^T_k]}\sigma: \mathbb{R}_+ \times \mathbb{B} \to \text{L}_2(U,H)$ is measurable and  $(\mathcal{B}_t^+(\mathbb{B}))$-adapted. 
			\item[(b)] Due to the continuity of $t \mapsto \int_{0}^{t}||\sigma(s,y)||^2_{\text{L}_2(U,H)}\text{d}s$ and (\ref{Second assump}), we have $\tau^T_k(X) \nearrow T$ $\mathbb{P}-$a.s. for $k \to \infty$ and hence, since $\big{\{}y \in \mathbb{B}|\tau^T_k(y) \underset{k \to \infty}{\to} T\big{\}} \in \mathcal{B}(\mathbb{B})\otimes \mathcal{B}(\mathbb{W}_0):$
			$$\mathbb{P}(\{\tau^T_k(X)) \underset{k \to \infty}{\to} T\}) = \int_{\Omega}\mathbb{P}_{\omega}(\{(y,w) \in \mathbb{B}\times \mathbb{W}_0|\tau^T_k(y) \underset{k \to \infty}{\to} T\})\,\text{d}\mathbb{P}(\omega),$$which yields $\tau^T_k(\Pi_1) \underset{k \to \infty}{\to} T$ $\mathbb{P}_{\omega}$-a.s. for $\mathbb{P}$-a.a. $\omega \in \Omega$.
		\end{enumerate}Hence, as in the previous step, we find elementary, $(\mathcal{B}_t^+(\mathbb{B}))$-predictable functions $(q^{T,k}_n)_{n \in \mathbb{N}}$ with \begin{equation*}
		\mathbb{E}\bigg[\int_{0}^{T}||\mathds{1}_{]0,\tau^T_k(X)]}\sigma(s,X)-q^{T,k}_n(s,X)||^2_{\text{L}_2(U,H)}\text{d}s \bigg]\underset{n \to \infty}{\to} 0
		\end{equation*} and therefore, by the isometry for stochastic integrals, also 
		\begin{equation}\label{argh}
		\mathbb{E}\bigg[\underset{t\in [0,T]}{\text{sup}}\big|\big|\int_{0}^{t}q^{T,k}_n(s,X)\text{d}W_s-\int_{0}^{t}\mathds{1}_{]0,\tau^T_k(X)]}\sigma(s,X)\text{d}W_s\big|\big|^2_H\bigg]\underset{n \to \infty}{\to} 0.
		\end{equation} As in (\ref{Equatio}), we find a subsequence $(n_m)_{m \in \mathbb{N}}$ such that
		\begin{equation*}
		\mathbb{E}_{\mathbb{P}_{\omega}}\bigg[\int_{0}^{T}||\mathds{1}_{]0,\tau^T_k(\Pi_1)]}\sigma(s,\Pi_1)-q^{T,k}_{n_m}(s,\Pi_1)||^2_{\text{L}_2(U,H)}\text{d}s\bigg] \underset{m \to \infty}{\to}0 \,\,\, \mathbb{P} \text{-a.s.}
		\end{equation*}Similarly to (\ref{onethree}) we obtain for $\mathbb{P}$-a.a. $\omega \in \Omega:$
		\begin{equation}\label{onesix}\begin{aligned}
			\int_{0}^{T}q^{T,k}_{n_m}(s,\Pi_1)\text{d}\hat{\Pi}_2(s) \underset{m \to \infty}{\to}\int_{0}^{T}\mathds{1}_{]0,\tau^T_k(\Pi_1)]}\sigma(s,\Pi_1)\text{d}\hat{\Pi}_2(s)= \int_{0}^{\tau^T_k(\Pi_1)}\sigma(s,\Pi_1)\text{d}\hat{\Pi}_2(s) 
		\end{aligned}
		\end{equation}
		in $L^2(\bar{\Omega},\mathbb{P}_{\omega};H)$. Considering (\ref{argh}) along the same subsequence $(n_m)_{m \in\mathbb{N}}$ yields a further subsequence $(n_{m_l})_{l \in \mathbb{N}}$ with
		\begin{align*}
		\int_{0}^{T}q^{T,k}_{n_{m_l}}(s,X)\text{d}W_s \underset{l \to \infty}{\to} \int_{0}^{\tau^T_k(X)}\sigma(s,X)\text{d}W_s = X_{\tau^T_k(X)}-X_0-\int_{0}^{\tau^T_k(X)}b(s,X)\text{d}s \,\,\, \mathbb{P}\text{-a.s.}
		\end{align*} Proceeding along the same steps as in part (i) up to (\ref{goodeq}) with the necessary technical adjustments, we arrive at
		\begin{equation*}
		\int_{0}^{T}q^{T,k}_{n_{m_l}}(s,\Pi_1)\text{d}\hat{\Pi}_2 \underset{l \to \infty}{\to} \Pi_1(\tau^T_k(\Pi_1))-\Pi_1(0)-\int_0^{\tau^T_k(\Pi_1)}b(s,\Pi_1)\text{d}s \,\,\mathbb{P}_{\omega}\text{-a.s.}
		\end{equation*} for $\mathbb{P}$-a.a. $\omega \in \Omega$. Comparing with (\ref{onesix}), we observe $\mathbb{P}_{\omega}$-a.s.
		\begin{equation}\label{einszwei}
		\Pi_1(\tau^T_k(\Pi_1))=\Pi_1(0)+\int_{0}^{\tau^T_k(\Pi_1)}b(s,\Pi_1)\text{d}s+\int_{0}^{\tau^T_k(\Pi_1)}\sigma(s,\Pi_1)\text{d}\hat{\Pi}_2(s)
		\end{equation}for $\mathbb{P}$-a.a. $\omega \in \Omega$. Now consider (\ref{einszwei}) for all $k \in \mathbb{N}$ simultaneously and pass to the limit of $\tau^T_k(\Pi_1)$ for $k \to \infty$, which, as we stated above, is $\mathbb{P}_{\omega}$-a.s. equal to $T$ for $\mathbb{P}$-a.a. $\omega \in \Omega$. By the continuity of all terms involved, for $\mathbb{P}$-a.a. $\omega \in \Omega$
		\begin{equation*}
		\Pi_1(T)=\Pi_1(0)+\int_{0}^{T}b(s,\Pi_1)\text{d}s+\int_{0}^{T}\sigma(s,\Pi_1)\text{d}\hat{\Pi}_2(s) \,\,\,\,\mathbb{P}_{\omega}\text{-a.s.}
		\end{equation*}Repeating this procedure for every $T >0$ and using the continuity of both sides of the equation as $E$-valued processes, we obtain the statement.
	\end{enumerate}
	Finally consider (II). Due to $X_0 \equiv x$, we have for each $A \in \mathcal{B}(H)$:
	\begin{align}\label{oneeleven}
	\{0,1\} \ni \mathbb{P}(X_0 \in A) = \mathbb{P}\big((X,\bar{W})\in \{\Pi_1(0) \in A\}\big) = \int_{\Omega}\mathbb{P}_{\omega}\big(\{\Pi_1(0)\in A\}\big)\mathbb{P}(\text{d}\omega)
	\end{align}and thereby 	$\mathbb{P}_{\omega}\big(\{\Pi_1(0)\in A\}\big) = \mathbb{P}(X_0 \in A)$ for $\mathbb{P}$-a.a. $\omega \in \Omega.$
	Since $H$ is a separable Hilbert space, we can choose a $\cap$-stable, countable generator of $\mathcal{B}(H)$. Then the above equality holds for all elements $A$ of this generating set outside one common $\mathbb{P}$-zero set and from there we conclude $\mathbb{P}_{\omega}\circ \Pi_1(0)^{-1} = \mathbb{P}\circ X_0^{-1}$ for $\mathbb{P}$-a.a. $\omega \in \Omega$ as measures on $\mathcal{B}(H)$, which finishes the proof.  \end{proof}Throughout the proof of our main results we will work with stochastic integrals, which involve certain projection-valued operators as integrands. The next lemma states that these integrals are well-defined.
\begin{lem}\label{Stoch Int well-defined for psi,phi}
	Let $(X,W)$ be a weak solution to Eq. (\ref{Equation}) on a stochastic basis $(\Omega, \mathcal{F}, (\mathcal{F}_t)_{t \geq 0},\mathbb{P})$. For $(t,y)\in \mathbb{R}_+\times \mathbb{B}$ define the operators $\phi(t,y),\psi(t,y) \in \text{L}(U)$ through $$\phi(t,y)(u) := \text{pr}_{\text{ker}\,\sigma(t,y)^{\perp}}(u) \text{ and } \psi(t,y)(u):=\text{pr}_{\text{ker}\,\sigma(t,y)}(u),$$where $\text{pr}_{V}(\cdot)$ denotes the orthogonal projection onto a closed linear subspace $V \subseteq U$. Then the following holds:
	\begin{enumerate}
		\item [(i)] As processes in $(t,y) \in \mathbb{R}_+ \times \mathbb{B}$, $\phi$ and $\psi$ are $\text{L}_2(U,H)$-valued, measurable and $(\mathcal{B}_t(\mathbb{B}))$-adapted with respect to the strong Borel $\sigma$-algebra on $\text{L}_2(U,H)$.
		\item[(ii)] For any $\mathbb{R}^{\infty}$-Wiener process $W'$ on $(\Omega, \mathcal{F}, (\mathcal{F}_t)_{t \geq 0},\mathbb{P})$, the stochastic integrals $\int_{0}^{t}J \circ\phi(s,X)\text{d}W'_s$ and $\int_{0}^{t}J \circ \psi(s,X)\text{d}W'_s$ are well-defined, $\bar{U}$-valued continuous processes for $t \geq 0$. Further for every $T >0$, both processes are square-integrable on $[0,T]$ in the sense that $J \circ \phi(\cdot,X) \circ J^{-1}, J \circ \psi(\cdot,X)\circ J^{-1} \in \Lambda^2_T(\bar{W'},\bar{U},\bar{U},\mathcal{P}_T)$ for every $T >0$.
	\end{enumerate}
\end{lem}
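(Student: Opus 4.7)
The proof splits naturally into (i), which is an operator-theoretic measurability statement about the projections themselves, and (ii), which is a straightforward consequence of (i) combined with the Hilbert--Schmidt property of $J$.

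For (i) the plan is to represent the projections through a functional--calculus formula that makes measurability transparent. Setting $B(t,y):=\sigma(t,y)^{*}\sigma(t,y)\in \text{L}^{+}_{1}(U)$, one has the strong operator limits
\begin{equation*}
\psi(t,y)=s\text{-}\lim_{\varepsilon\downarrow 0}\,\varepsilon\bigl(B(t,y)+\varepsilon I\bigr)^{-1},\qquad \phi(t,y)=I-\psi(t,y).
\end{equation*}
From Assumption 1 on $\sigma$ one knows that $(t,y)\mapsto \sigma(t,y)$ is measurable (and $(\mathcal{B}_{t}(\mathbb{B}))$-adapted) with values in the separable Hilbert space $\text{L}_{2}(U,H)$, hence norm measurable. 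Since the adjoint map $A\mapsto A^{*}$ and operator composition $(A,B)\mapsto AB$ are continuous between the appropriate Hilbert--Schmidt spaces, $B(t,y)$ inherits (norm) measurability with values in $\text{L}_{1}(U)\subseteq \text{L}(U)$. For each fixed $\varepsilon>0$, the resolvent map $B\mapsto (B+\varepsilon I)^{-1}$ is operator-norm continuous on the cone of non-negative self-adjoint operators (e.g.\ by the second resolvent identity and $\|(B+\varepsilon I)^{-1}\|_{\text{L}(U)}\le \varepsilon^{-1}$), so $(t,y)\mapsto (B(t,y)+\varepsilon I)^{-1}$ is operator-norm, hence strongly, measurable. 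Multiplying by $B(t,y)$ (and using that a strongly measurable bounded operator-valued map acting on a measurable vector-valued map yields a measurable vector-valued map, by approximation with simple functions and the uniform bound $\|\phi_\varepsilon\|_{\text{L}(U)}\le 1$) gives strong measurability and adaptedness of $\phi_{\varepsilon}=B(B+\varepsilon I)^{-1}$, and the pointwise strong operator limit then passes to $\phi$ and $\psi$. Testing on a countable dense subset of $U$ is what makes the strong limit preserve joint measurability in $(t,y)$.

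Given (i), part (ii) is then a uniform-bound argument. Since $\|\phi(t,y)\|_{\text{L}(U)}\le 1$ and $J\in \text{L}_{2}(U,\bar{U})$, we have $J\circ\phi(t,y)\in \text{L}_{2}(U,\bar{U})$ with
\begin{equation*}
\|J\circ \phi(t,y)\|_{\text{L}_{2}(U,\bar{U})}\le \|J\|_{\text{L}_{2}(U,\bar{U})},
\end{equation*}
and analogously for $\psi$. Composition with the strongly measurable $(\mathcal{B}_{t}(\mathbb{B}))$-adapted $\phi,\psi$ yields norm-measurability into $\text{L}_{2}(U,\bar{U})$. Pulling back along the $\mathcal{F}/\mathcal{B}(\mathbb{B})$-measurable and $(\mathcal{F}_{t})$-adapted process $X$, and passing to the right-continuous filtration, one obtains predictability of $J\circ\phi(\cdot,X)$ and $J\circ\psi(\cdot,X)$ on $[0,T]\times\Omega$. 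The integrability condition
\begin{equation*}
\int_{0}^{T}\|J\circ\phi(s,X)\circ J^{-1}\|_{\text{L}_{2}(\bar{U},\bar{U})}^{2}\,\text{d}s\le T\|J\|_{\text{L}_{2}(U,\bar{U})}^{2}<\infty
\end{equation*}
places $J\circ\phi(\cdot,X)\circ J^{-1}$ in $\Lambda^{2}_{T}(\bar{W}',\bar{U},\bar{U},\mathcal{P}_{T})$, so the stochastic integral $\int_{0}^{t}J\circ\phi(s,X)\,\text{d}W'_{s}=\int_{0}^{t}(J\circ\phi(s,X))\circ J^{-1}\,\text{d}\bar{W}'_{s}$ is a well-defined continuous $\bar{U}$-valued square-integrable martingale. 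The analogous statement for $\psi$ is obtained verbatim.

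I expect the main obstacle to be the careful handling of joint $(t,y)$-measurability (and adaptedness) under a strong operator limit: one must verify that the separability of $U$ permits testing on a countable dense set so that a single exceptional set does not arise, and that the composition $B\cdot (B+\varepsilon I)^{-1}$ of two strongly measurable operator-valued maps is genuinely strongly measurable (hence bounded approximation by simple functions is invoked). Once this purely operator-theoretic step is executed cleanly, the rest of the argument is bookkeeping built on the Hilbert--Schmidt property of $J$ and the isometry for cylindrical stochastic integrals discussed in Appendix B.
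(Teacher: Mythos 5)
Your proof is correct, and part (ii) is essentially the paper's own argument: both reduce well-definedness to the uniform bound $\|J\circ\phi(t,y)\|_{\mathrm{L}_2(U,\bar{U})}\le\|J\|_{\mathrm{L}_2(U,\bar{U})}$ and then integrate; the paper gets the bound by computing the Hilbert--Schmidt norm in an orthonormal basis adapted to the splitting $U=\ker\sigma(t,y)\oplus\ker\sigma(t,y)^{\perp}$, while you use the general inequality $\|JA\|_{\mathrm{L}_2}\le\|J\|_{\mathrm{L}_2}\|A\|_{\mathrm{L}(U)}$ --- the two are interchangeable. The genuine divergence is in part (i): the paper reduces $\phi$ to $\psi$ via $\phi=\mathrm{id}_U-\psi$ and then simply defers the strong measurability of $(t,y)\mapsto\psi(t,y)u$ to Lemma 9.2 of Ondrejat, whereas you give a self-contained argument through the resolvent approximation $\psi(t,y)=s\text{-}\lim_{\varepsilon\downarrow 0}\varepsilon(\sigma(t,y)^{*}\sigma(t,y)+\varepsilon I)^{-1}$, using continuity of $\sigma\mapsto\sigma^{*}\sigma$ into $\mathrm{L}_1(U)$ and the operator-norm Lipschitz bound $\varepsilon^{-2}$ for the resolvent on the non-negative cone. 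This is precisely the kind of argument the cited external lemma encapsulates, so your route buys self-containedness (and makes transparent why $(\mathcal{B}_t(\mathbb{B}))$-adaptedness survives, since each approximant is a norm-continuous function of $\sigma(t,\cdot)$) at the cost of length. Two minor remarks: the appeal to a countable dense subset of $U$ is superfluous --- strong measurability only requires measurability of $(t,y)\mapsto\psi(t,y)u$ for each fixed $u$, which the pointwise limit of measurable approximants delivers directly; and for membership in $\Lambda^2_T(\bar{W}',\bar{U},\bar{U},\mathcal{P}_T)$ the quantity to control is $\|J\phi(s,X)J^{-1}\bar{Q}^{\frac{1}{2}}\|_{\mathrm{L}_2(\bar{U})}=\|J\phi(s,X)\|_{\mathrm{L}_2(U,\bar{U})}$, which is exactly what your bound handles, so the conclusion stands.
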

\begin{proof}
	\begin{enumerate}
		\item [(i)] Due to the obvious identity 
		$\phi(t,y) = \text{id}_{U} - \psi(t,y),$ it suffices to prove the assertion for $(t,y) \mapsto \psi(t,y)$. Hence we fix $u \in U$ and must prove that $\psi(u) : \mathbb{R}_+ \times \mathbb{B} \to H$, $\psi(u)(t,y) := \psi(t,y)(u)$ is measurable and $(\mathcal{B}_t(\mathbb{B}))$-adapted. But this can be done as in \cite[Lemma 9.2]{Ondrejat}.
		\item[(ii)] By (i) and because $J \in \text{L}(U,\bar{U})$, both $J\circ \phi(\cdot,X)$ and $J \circ \psi(\cdot,X)$ are strongly measurable, $(\mathcal{B}_t(\mathbb{B}))$-adapted and $\text{L}_2(U,\bar{U})$-valued.
		Now fix $(t,y) \in \mathbb{R}_+\times \mathbb{B}$. For $A \in \text{L}_2(U,\bar{U})$ the value  $||A||_{\text{L}_2(U,\bar{U})} = \big(\sum_{k=1}^{\infty}||Af_k||^2_{\bar{U}}\big)^{\frac{1}{2}}$ is independent of the orthonormal basis $\{f_k\}_{k \in \mathbb{N}}$. Hence we may choose $\{f_k\}_{k \in \mathbb{N}}$ such that either $f_k \in \text{ker}\,\sigma(t,y)$ or $f_k \in \text{ker}\,\sigma(t,y)^{\perp}$ for every $k \in \mathbb{N}$. Then we obtain
		\begin{align*}
		||J \phi(t,y) ||_{\text{L}_2(U,\bar{U})}^2 = \sum_{f_k \in \, \text{ker}\,\sigma(t,y)^{\perp}}||Jf_k||^2_{\bar{U}} \leq  ||J||_{\text{L}_2(U,\bar{U})}^2 < +\infty
		\end{align*}for all $(t,y) \in \mathbb{R}_+ \times \mathbb{B}$ since $J$ is Hilbert-Schmidt. Hence for each $t \geq 0$ $$\mathbb{E}\bigg[\int_{0}^{t}||J \phi(s,X)||_{\text{L}_2(U,\bar{U})}^2\text{d}s\bigg] \leq \mathbb{E}\bigg[\int_{0}^{t}||J||^2_{\text
			L_2(U,\bar{U})}\text{d}s\bigg] = t||J||^2_{\text
			L_2(U,\bar{U})}< \infty,$$which completes the proof of (ii), because the $\psi$-integral can be treated similarly. \qedhere \end{enumerate}\end{proof}
Our next goal is to prove that the quadratic cross variation of two stochastic integrals is additive, if the integrators are independent Wiener processes (c.f. (\ref{qu var add}) below). We will need this result along the proof of our second main theorem. We start with a technical lemma. Its proof is postponed to the appendix.
\begin{lem}\label{AAA}
	Let $(\Omega, \mathcal{F}, (\mathcal{F}_t)_{t \geq 0}, \mathbb{P})$ be a stochastic basis, $Q \in \text{L}^+_1(U)$ and $W^1, W^2$ two independent $U$-valued $(\mathcal{F}_t)$-$Q$-Wiener processes on $\Omega$. Then for every $\phi_1$, $\phi_2: \mathbb{R}_+ \times \Omega \to \text{Lin}(U,\mathbb{R})$ with $\phi_k \in \Lambda^2_T(W^k,U,\mathbb{R},\mathcal{P}_T)$ for every $T >0$ and $k \in \{1,2\}$, the following holds:
	\begin{equation*}
	\mathbb{E}\bigg[\int_{0}^{\tau}\phi_1(s)\text{d}W^1(s)\cdot \int_{0}^{\tau}\phi_2(s)\text{d}W^2(s)\bigg] = 0
	\end{equation*}for every bounded $(\mathcal{F}_t)$-stopping time $\tau:\Omega \to \mathbb{R}_+$. In particular the covariation process of the two stochastic integrals above is constantly zero $\mathbb{P}$-a.s.
\end{lem}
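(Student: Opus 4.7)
The strategy is classical: establish the identity first for elementary integrands at a deterministic terminal time, then extend via the Itô isometry, and finally handle bounded stopping times by stopping the integrands. For $\tau \leq T$ a bounded stopping time, the process $\tilde{\phi}_k := \phi_k \mathds{1}_{(0,\tau]}$ is still predictable and remains in $\Lambda^2_T(W^k, U, \mathbb{R}, \mathcal{P}_T)$, and $\int_0^{\tau}\phi_k \, dW^k = \int_0^T \tilde{\phi}_k \, dW^k$ by the standard stopping property, so it suffices to establish the expectation identity for $\tau = T$ deterministic.

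For elementary $\phi_1, \phi_2$, by passing to a common refinement one may write $\phi_k(s) = \sum_{m=0}^{N-1}\Phi_m^k \mathds{1}_{(t_m,t_{m+1}]}(s)$ on a common partition $0 = t_0 < \cdots < t_N = T$ with $\mathcal{F}_{t_m}$-measurable values $\Phi_m^k$. Expanding the product of the two stochastic integrals yields a double sum $\sum_{m,n}\Phi_m^1\Phi_n^2 (W^1_{t_{m+1}} - W^1_{t_m})(W^2_{t_{n+1}} - W^2_{t_n})$; off-diagonal terms with $m \neq n$ have zero mean by conditioning on $\mathcal{F}_{t_{m\vee n}}$ and invoking the martingale-increment property of the appropriate $W^k$, and diagonal terms have zero mean by conditioning on $\mathcal{F}_{t_m}$ and using that $W^1$ and $W^2$ are independent with mean-zero increments. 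Passage to general $\phi_k \in \Lambda^2_T$ is then routine: approximate by elementary processes in the $\Lambda^2_T$-norm, apply the Itô isometry to get $L^2(\mathbb{P})$-convergence of the integrals, and conclude by Cauchy--Schwarz that the expectation of the product passes to the limit, which is $0$ for every approximant.

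For the \emph{in particular} assertion, set $M^k_t := \int_0^t \phi_k \, dW^k$; these are continuous, square-integrable $(\mathcal{F}_t)$-martingales. Applying the identity just established to the integrands $\mathds{1}_A \mathds{1}_{(s,t]}\phi_1$ and $\mathds{1}_{(s,t]}\phi_2$ for arbitrary $A \in \mathcal{F}_s$ and $s \leq t$ yields $\mathbb{E}[(M^1_t - M^1_s)(M^2_t - M^2_s)\mathds{1}_A] = 0$; combined with the individual martingale property of $M^1$ and $M^2$, this shows that $M^1M^2$ is itself a continuous $(\mathcal{F}_t)$-martingale. Since $M^1M^2 - [M^1, M^2]$ is a continuous local martingale by the definition of covariation and $[M^1, M^2]$ is continuous of finite variation starting at zero, uniqueness of the semimartingale decomposition forces $[M^1, M^2] \equiv 0$ $\mathbb{P}$-a.s. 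The principal obstacle is bookkeeping rather than conceptual: one must verify that the approximation of $\Lambda^2_T$-integrands by elementary ones carries over cleanly to the $Q$-Wiener setting adopted here, and that multiplication of an admissible integrand by $\mathds{1}_{(0,\tau]}$ or by an $\mathcal{F}_s$-measurable indicator preserves membership in $\Lambda^2_T$.
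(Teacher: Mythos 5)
Your proposal follows essentially the same route as the paper: reduce to a deterministic terminal time by replacing $\phi_k$ with $\mathds{1}_{]0,\tau]}\phi_k$, reduce to elementary integrands via the isometry defining the stochastic integral, and then expand the product into a double sum whose off-diagonal terms vanish by conditioning on $\mathcal{F}_{t_{l\vee m}}$ and whose diagonal terms vanish by the independence of $W^1$ and $W^2$. Your explicit derivation of the \emph{in particular} covariation statement (showing $M^1M^2$ is a martingale and invoking uniqueness of the semimartingale decomposition) is a correct elaboration of a step the paper leaves implicit.
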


Now we can straight forward prove the desired result:
\begin{prop}\label{good cor}
	Let $\phi_k \in \Lambda^2_T(W^k,U,H,\mathcal{P}_T)$ for every $T > 0$ for $k \in \{1,2\}$ and $W^1$, $W^2$ as above. Then we have $\mathbb{P}$-a.s.:
	\begin{equation}\label{qu var add}
	\begin{aligned}
	\ll \int_{0}^{\cdot}\phi_1(s) \text{d}W^1(s) +  \int_{0}^{\cdot}\phi_2(s)\text{d} W^2(s) \gg_t\, = \,	\ll \int_{0}^{\cdot}\phi_1(s) \text{d}W^1(s) \gg_t + \ll  \int_{0}^{\cdot}\phi_2(s)\text{d} W^2(s) \gg_t
	\end{aligned}
	\end{equation}for every $t \geq 0$.
\end{prop}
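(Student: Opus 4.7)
The strategy is to reduce (\ref{qu var add}) to the scalar situation covered by Lemma \ref{AAA} via a projection argument. Writing $M_k := \int_0^\cdot \phi_k(s)\,\text{d}W^k(s)$ for $k \in \{1,2\}$, each $M_k$ is a continuous square-integrable $H$-valued martingale, and bilinearity of the $\text{L}_1(H)$-valued tensor quadratic covariation yields
$$\ll M_1 + M_2 \gg_t \,=\, \ll M_1 \gg_t + \ll M_2 \gg_t + \ll M_1, M_2 \gg_t + \ll M_2, M_1 \gg_t.$$
Thus (\ref{qu var add}) is equivalent to the vanishing of the tensor cross variation $\ll M_1, M_2 \gg_t$ for all $t \geq 0$ (the adjoint/symmetric term then vanishes by the same reasoning).

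The tensor bracket is uniquely determined by its scalar projections: for $h, h' \in H$ one has $\langle \ll M_1, M_2 \gg_t h, h' \rangle_H = \ll \langle M_1, h\rangle_H, \langle M_2, h'\rangle_H \gg_t$, where the right-hand bracket is the classical real-valued predictable covariation. Fixing arbitrary $h, h' \in H$, I would introduce the real-valued integrands
$$\phi^h_k(s,\omega) := \langle \phi_k(s,\omega)\,\cdot\,, h \rangle_H \in \text{Lin}(U,\mathbb{R}),$$
which inherit predictability from $\phi_k$ and satisfy $\phi^h_k \in \Lambda^2_T(W^k, U, \mathbb{R}, \mathcal{P}_T)$ for every $T > 0$ by Cauchy-Schwarz:
$$\mathbb{E}\bigg[\int_0^T \|\phi_k(s)^* h\|_U^2\,\text{d}s\bigg] \leq \|h\|_H^2\,\mathbb{E}\bigg[\int_0^T \|\phi_k(s)\|^2_{\text{L}_2(U,H)}\,\text{d}s\bigg] < \infty.$$
Since bounded linear functionals commute with the Hilbert-space stochastic integral, one has the identity $\langle M_k(t), h\rangle_H = \int_0^t \phi^h_k(s)\,\text{d}W^k(s)$.

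With these preparations in place, Lemma \ref{AAA} applied to $\phi^h_1$ and $\phi^{h'}_2$ directly yields that the scalar covariation $\ll \langle M_1, h\rangle_H, \langle M_2, h'\rangle_H \gg_t$ is identically zero $\mathbb{P}$-a.s., whence $\langle \ll M_1, M_2\gg_t h, h'\rangle_H = 0$ for all $h, h' \in H$, giving $\ll M_1, M_2 \gg_t = 0$ in $\text{L}_1(H)$, and (\ref{qu var add}) then follows from the bilinearity identity displayed above. The only (mild) obstacle is conceptual rather than computational: one must carefully record the transition between the scalar statement of Lemma \ref{AAA} and the $\text{L}_1(H)$-valued tensor bracket by projecting onto arbitrary pairs $(h, h') \in H \times H$ rather than onto a fixed orthonormal basis. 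Once this characterisation of the tensor bracket is invoked, no probabilistic ingredient beyond Lemma \ref{AAA} is needed.
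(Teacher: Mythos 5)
Your argument is correct and follows essentially the same route as the paper: project the $H$-valued integrals onto scalar components, invoke Lemma \ref{AAA} to kill the real-valued covariations, and reassemble via the additivity of the tensor bracket (your explicit bilinearity decomposition plus vanishing cross variation is exactly what the paper packages as Corollary \ref{Quad Var additive for independent}). The only cosmetic difference is that you test against arbitrary pairs $(h,h')\in H\times H$ where the paper uses a fixed orthonormal basis, which changes nothing of substance.
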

\begin{proof}Let $(f_k)_{k \in \mathbb{N}}$ be an orthonormal basis of $H$.
	Lemma \ref{AAA} and the fact that bounded linear operators interchange with stochastic integrals imply for every $i,j \in \mathbb{N}:$ $$	\langle \langle \,\, \langle \int_{0}^{\cdot}\phi_1(s) \text{d}W^1(s),f_i \rangle_H, \langle \int_{0}^{\cdot}\langle\phi_2(s)\text{d} W^2(s),f_j \rangle_H \,\, \rangle \rangle_t\, = 0  \text{ for all }t\geq 0\,\,\, \mathbb{P}\text{-a.s.},$$ because by assumption on $\phi_k$, the integrands $\langle \phi_k(\cdot),f_i\rangle_H$ obviously fulfill the assumption of the previous lemma for every $k \in \{1,2\}$ and $i \in \mathbb{N}$. Hence the assertion follows by Corollary \ref{Quad Var additive for independent}.
\end{proof}
Finally we present a definition, which will be useful within the proof of Theorem \ref{UL impl JUL}.
\begin{dfn}
	Let $H$ be a separable Hilbert space with inner product $\langle \cdot, \cdot \rangle_H$. The Hilbert space $(H\oplus H, \langle \cdot, \cdot \rangle_{H \oplus H})$ is defined as the Cartesian product $H\times H$ with the inner product $\langle (h_1,h_2),(h_3,h_4)\rangle_{H \oplus H} := \langle h_1,h_3\rangle_H +\langle h_2,h_4 \rangle_H$. When no confusion is possible, we abbreviate $\langle \cdot,\cdot \rangle_{H \oplus H}$ by $\langle \cdot, \cdot \rangle_{\oplus}$.
\end{dfn}
\begin{rem}
	It is obvious that the Hilbert space $(H\oplus H, \langle \cdot, \cdot \rangle_{H \oplus H})$ is separable and that $\mathcal{B}(H \oplus H) = \mathcal{B}(H) \otimes \mathcal{B} (H)$. The latter holds, because the metric induced by $\langle \cdot, \cdot \rangle_{\oplus}$ induces the product topology on $H \oplus H$.
\end{rem}
\subsection{Proofs of the main results}
Now we give proofs for the two main results of this paper. \\ \\
\textbf{Proof of Theorem \ref{Main Theorem I}:}
Fix a measure $\mu \in \mathcal{M}^+_1(\mathcal{B}(H))$ for which joint uniqueness in law given $\mu$ holds, a stochastic basis $(\Omega, \mathcal{F},(\mathcal{F}_t)_{t \geq 0}, \mathbb{P})$, an $(\mathcal{F}_t)$-$\mathbb{R}^{\infty}$-Wiener process  $W$ and an $\mathcal{F}_0$-measurable map $\xi_0:\Omega \to H$ with $\mathbb{P}\circ \xi_0^{-1} = \mu.$ Let $Z:=F_{\mathbb{P}\circ{\xi_0}^{-1}}(\xi_0, \bar{W})$ be a strong solution with respect to this data. We prove \begin{equation}\label{Z=X in MainProof2}
Z_t=X_t \; \text{for all} \,\, t \geq 0 \; \mathbb{P}\text{-a.s.}
\end{equation}for every weak solution $X$ with respect to the same data.
To do so, let $F$, $Z$ and $X$ be as above and set $\mathcal{G}^{\bar{W}}_0 := \overline{\sigma(\xi_0,\bar{W})}^{\,\mathbb{P}}$. As before, $\bar{W}$ denotes the $\bar{U}$-valued $(\mathcal{F}_t)$-$\bar{Q}$-Wiener process associated to $W$. We make the following observations:\begin{enumerate}
	\item [(i)] $\mathcal{E}^{\bar{W}}_0:=\{\big(\xi_0^{-1}(G) \cap \bar{W}^{-1}(B)\big) \cup N|G \in \mathcal{B}(H), B \in \mathcal{B}(\mathbb{W}_0), \,\mathbb{P}(M)=0\}$ is a $\cap$-stable generator of $\mathcal{G}_0^{\bar{W}}$.
	\item [(ii)]Since by definition of the strong solution $F$, $(h,y) \mapsto F_{\mathbb{P}\circ \xi_0^{-1}}(h,y)$ is $\overline{\mathcal{B}(H)\otimes\mathcal{B}(\mathbb{W}_0)}^{\mu \otimes \mathbb{P}^{\bar{Q}}}/\mathcal{B}(\mathbb{B})$-measurable, the map $\omega \mapsto F_{\mathbb{P}\circ \xi_0^{-1}}(\xi_0(\omega),\bar{W}(\omega))$ $=Z(w)$ is $\mathcal{G}^{\bar{W}}_0/\mathcal{B}(\mathbb{B})$-measurable. Indeed, as $\xi_0$ is $\mathcal{F}_0$-measurable and hence $\mathbb{P}$-independent of $\bar{W}$, we obtain $\mathbb{P}\circ (\xi_0,\bar{W})^{-1} = \mu \otimes \mathbb{P}^{\bar{Q}}$ and thereby the claim follows by the $\mathcal{G}^{\bar{W}}_0/\overline{\mathcal{B}(H)\otimes\mathcal{B}(\mathbb{W}_0)}^{\mu \otimes \mathbb{P}^{\bar{Q}}}$- measurability of $(\xi_0,\bar{W}): \Omega \to H\times \mathbb{W}_0$. Here  $\mathbb{P}^{\bar{Q}}$ denotes the measure $\mathbb{P}\circ \bar{W}^{-1}$ on $\mathcal{B}(\mathbb{W}_0)$.
	\end{enumerate}
	Since $(\mathbb{B},\mathcal{B}(\mathbb{B}))$ is Polish there exists a unique regular conditional distribution of $Z: \Omega \to \mathbb{B}$ with respect to $\mathcal{G}^{\bar{W}}_0,$ which we denote by $(Q^Z_{\omega})_{\omega \in \Omega}$. Since $Z$ is $\mathcal{G}^{\bar{W}}_0$-measurable, Remark \ref{Rem rcd} implies $Q^Z_{\omega}=\delta_{Z(w)}$ $\mathbb{P}$-a.s.\\
	As we assume joint uniqueness in law given $\mu$ and we have $X_0=\xi_0=Z_0$ $\mathbb{P}$-a.s. and $\mathbb{P}\circ \xi_0^{-1} = \mu$, we obtain \begin{equation}\label{joint law equal}\mathbb{P}\circ (X,\bar{W})^{-1} = \mathbb{P}\circ(Z,\bar{W})^{-1}. \end{equation}
	By the same arguments as above there exists a unique regular conditional distribution of $X: \Omega \to \mathbb{B}$ with respect to $\mathcal{G}^{\bar{W}}_0$, which we denote by $(Q^X_{\omega})_{\omega \in \Omega}$.
	Clearly $\omega \mapsto \delta_{Z(w)}(A)$ is $\mathcal{G}^{\bar{W}}_0$-measurable for every $A \in \mathcal{B}(\mathbb{B})$. Further, due to (\ref{joint law equal}), we have
	$
	\mathbb{P}(\{X \in A\}\cap \{\bar{W}\in B\}) = \mathbb{P}(\{Z \in A\}\cap \{\bar{W}\in B\})\text{ for all }A \in \mathcal{B}(\mathbb{B}), \, B \in \mathcal{B}(\mathbb{W}_0).
	$ Since $\{\pi_0 \in G\} \in \mathcal{B}(\mathbb{B})$ for $G \in \mathcal{B}(H)$ and $X_0=\xi_0=Z_0 \; \mathbb{P}\text{-a.s.}$, we obtain
	\begin{equation}\label{fast alles gleich bis auf nullmengen}\mathbb{P}(\{\xi_0 \in G\}\cap \{X \in A\}\cap \{\bar{W}\in B\}) = \mathbb{P}(\{\xi_0 \in G\}\cap \{Z \in A\}\cap \{\bar{W}\in B\})
	\end{equation} for arbitrary $G \in \mathcal{B}(H), \, A \in \mathcal{B}(\mathbb{B}), \, B \in \mathcal{B}(\mathbb{W}_0) $. For fixed $A \in \mathcal{B}(\mathbb{B})$ set $\mathbb{P}^X_A(\cdot):=\mathbb{P}(\{X \in A\}\cap \cdot)$ and $\mathbb{P}^Z_A(\cdot):=\mathbb{P}(\{Z \in A\}\cap \cdot)$ on $\mathcal{G}_0^{\bar{W}}$. Then (\ref{fast alles gleich bis auf nullmengen}) yields $\mathbb{P}^X_A(E)=\mathbb{P}^Z_A(E) \text{ for all } E \in \mathcal{E}^{\bar{W}}_0,$ whence we conclude $\mathbb{P}^X_A=\mathbb{P}^Z_A$ $\text{ for all } A \in \mathcal{B}(\mathbb{B})$ as measures on $\mathcal{G}^{\bar{W}}_0$, i.e.
$
	\mathbb{P}(\{X \in A\}\cap C) = \mathbb{P}(\{Z \in A\} \cap C) \text{ for all } C \in \mathcal{G}^{\bar{W}}_0.
$ We conclude $\label{rcd of X is deltaZ}
	Q^X_{\omega}=\delta_{Z(w)} \; \mathbb{P}\text{-a.s.}
	$
	Hence for every $A \in \mathcal{B}(\mathbb{B})$\begin{equation*}\label{equality for indikatorf}\mathbb{E}[\mathds{1}_{\{X \in A\}}|\mathcal{G}^{\bar{W}}_0](\omega)=\delta_{Z(\omega)}(A) = \mathds{1}_{\{Z \in A\}}(\omega)\,\,\,\mathbb{P}\text{-a.s.}, \end{equation*}where the exception set may depend on $A$. Thus $\label{g-equality for all nonneg}
	\mathbb{E}[g(X)|\mathcal{G}^{\bar{W}}_0]=g(Z) \; \mathbb{P}\text{-a.s.}
	$ for every bounded and  $\mathcal{B}(\mathbb{B})/\mathcal{B}(\mathbb{R})\text{-measurable} \;\,g:\mathbb{B} \to \mathbb{R}$ by a simple monotone class argument. For each such $g$ we note 
	$
	\mathbb{E}[(g(X)-g(Z))^2]=2\mathbb{E}[g(Z)^2]-2\mathbb{E}\big[\mathbb{E}[g(X)g(Z)|\mathcal{G}^{\bar{W}}_0]\big]=2\mathbb{E}[g(Z)^2]-2\mathbb{E}[g(Z)g(Z)] =0.
	$ The first equality follows by the equality in law of $X$ and $Z$. Thus we obtain \begin{equation}\label{einszweidrei}
	g(X)=g(Z) \,\,\,\mathbb{P}\text{-a.s.}
	\end{equation}
	for each bounded, measurable $g: \mathbb{B} \to \mathbb{R}$. Now we can finally verify (\ref{Z=X in MainProof2}): Fix an orthonormal basis $\{f_i\}_{i \in \mathbb{N}}$ of $H$ and set $\sigma_j:H\to \mathbb{R}$, $\sigma_j=\langle\cdot, f_j\rangle_{H}$.
	For $q \in \mathbb{Q}_+$ and $j, n \in \mathbb{N}$, define $g_q^{j,n}: \mathbb{B} \to \mathbb{R}$ through $$g_q^{j,n}(y):=\big(\sigma_j(\pi_q(y))\wedge n\big)\vee -n,\,\,\, y \in \mathbb{B}
	$$and note that these functions are clearly bounded and $\mathcal{B}(\mathbb{B})/\mathcal{B}(\mathbb{R})$-measurable. As above, $\pi_q: \mathbb{B} \to H$ denotes the canonical projection from $\mathbb{B}$ to $H$ at time $q$. We have $\underset{n\to \infty}{\lim}g_q^{j,n}(X(\omega))=\langle X_q(w),f_j\rangle_{H}$ for every $\omega \in \Omega$. Applying (\ref{einszweidrei}) to $g_q^{j,n}$ for every $q,j,n$, we obtain $X_q=Z_q$ for all $ q \in \mathbb{Q}_+$ $\mathbb{P}$-a.s. and the path-continuity of $X$ and $Z$ in $H$ completes the proof. 
	\qed
	\begin{rem}
		The theorem and its proof remain valid if one replaces the assumption on the existence of a strong solution by the following\\
		\\	$\mathbf{Generalized \,\,assumption:}$ \\For every triple 
		$\big((\Omega, \mathcal{F},(\mathcal{F}_t)_{t \geq 0}, \mathbb{P}), W, \xi_0\big)$ for which at least one weak solution $X$ exists (i.e. the pair $(X,W)$ is a weak solution on this stochastic basis with $X_0 = \xi_0$ $\mathbb{P}$-a.s.), there also exists a solution $Z: \Omega \to \mathbb{B}$ subject to this triple, which is $\mathcal{G}_0^{\bar{W}}/\mathcal{B}(\mathbb{B})$-measurable.
		\end{rem}
		We now turn to the proof of Theorem \ref{Main Theorem I}. We will heavily need several properties and computation rules of stochastic integrals with respect to arbitrary square-integrable, continuous martingales. These properties are well-known to experts on stochastic integration in infinite dimensions. Nevertheless, for the convenience of the reader, we review the construction and properties of such stochastic integrals in Appendix B. \\ \\
		\noindent\textbf{Proof of Theorem \ref{UL impl JUL}}:
	Fix $x \in H$ and assume uniqueness in law given $\delta_x$ holds. We prove the following: For any weak solution $(X,W)$ to 
		\begin{equation}\label{det Eq}
		X_t = x+\int_{0}^{t}b(s,X)\text{d}s+\int_{0}^{t}\sigma(s,X)\text{d}W_s,\,\, t \geq 0
		\end{equation}on a stochastic basis $(\Omega, \mathcal{F}, (\mathcal{F}_t)_{t \geq 0}, \mathbb{P})$, the joint distribution $\mathbb{P}\circ(X,\bar{W})^{-1}$ is uniquely determined by $\mathbb{P}\circ X^{-1}$.
		Here and for the rest of the proof, for a $\mathbb{R}^{\infty}$-Wiener process $W$ we denote by $\bar{W}$ the $\bar{U}$-valued $\bar{Q}$-Wiener process associated to $W$. As we pointed out in the recap on cylindrical Wiener processes in the second section, we have $\bar{Q}  = JJ^*$. Let us fix a weak solution $(X,W)$ to Eq. (\ref{det Eq}). \\
		\\Let $(\Omega', \mathcal{F}', (\mathcal{F}'_t)_{t \geq 0}, \mathbb{P}')$ be another stochastic basis and $\bar{W}^1$, $\bar{W}^2$ two independent   $\bar{U}$-valued $(\mathcal{F}'_t)$-$\bar{Q}$-Wiener processes on this basis, i.e. \begin{equation*}
		\bar{W}^1 = \sum_{k=1}^{\infty}\beta_k^1 J e_k,\thickspace \bar{W}^2 = \sum_{k=1}^{\infty}\beta_k^2 J e_k,
		\end{equation*}where $(\beta_k^i)_{\{{k \in \mathbb{N},\, i \in \{1,2\}}\}}$ is an independent family of $\mathbb{R}$-valued $(\mathcal{F}'_t)$-Brownian motions on $\Omega'$ and $(e_k)_{k \in \mathbb{N}}$ is the orthonormal basis of $U$ we fixed in Subsection 1.2. The $\mathbb{R}^{\infty}$-Wiener processes associated to $\bar{W}^1$ and $\bar{W}^2$, i.e. the families $(\beta_k^1)_{k \in \mathbb{N}}$ and $(\beta_k^2)_{k \in \mathbb{N}}$, will be denoted by $W^1$ and $W^2$, respectively.\\
		\\ Define $(\tilde{\Omega}, \tilde{\mathcal{F}}, (\tilde{\mathcal{F}}_t)_{t \geq 0}, \tilde{\mathbb{P}})$ := $(\Omega \times \Omega', \overline{{\mathcal{F}\otimes \mathcal{F}'}}^{\,\mathbb{P} \otimes\mathbb{P}'}, (\tilde{\mathcal{F}_t})_{t \geq 0}, \mathbb{P}\otimes \mathbb{P}')$, (where $\tilde{\mathcal{F}_t}:= \bigcap_{\epsilon > 0}\sigma(\mathcal{F}_{t+\epsilon} \otimes \mathcal{F}'_{t + \epsilon}, \tilde{\mathcal{N}})),$ which is a stochastic basis. Here we set $\tilde{\mathcal{N}}$ := $\{A \in \tilde{\mathcal{F}}|\tilde{\mathbb{P}}(A)=0\}$. Define the processes $\tilde{X}, \tilde{\bar{W}}, \tilde{\bar{W}}^1, \tilde{\bar{W}}^2$ on $(\tilde{\Omega}, \tilde{\mathcal{F}}, (\tilde{\mathcal{F}}_t)_{t \geq 0}, \tilde{\mathbb{P}})$ through
		$\tilde{X}((\omega_1,\omega_2)) := X(\omega_1),
		\tilde{\bar{W}}((\omega_1,\omega_2)):= \bar{W}(\omega_1),
		\tilde{\bar{W}}^i((\omega_1,\omega_2)):=\bar{W}^i(\omega_2), \,i \in \{1,2\},$ for $(\omega_1,\omega_2) \in \tilde{\Omega}$ and analogously for the $\mathbb{R}^{\infty}$-Wiener processes $W, W^1$ and $W^2$. Clearly $\tilde{\bar{W}}, \tilde{\bar{W}}^i$ are independent $\bar{U}$-valued $(\tilde{\mathcal{F}_t})$-$\bar{Q}$-Wiener processes on $\tilde{\Omega}$ and 
		$\tilde{W}, \tilde{W}^1, \tilde{W}^2$ are independent $\mathbb{R}^{\infty}$-Wiener processes.
		Note that we also have \begin{equation}\label{zwoa}
		\tilde{\bar{W}} = \bar{\tilde{W}} = \sum_{k=1}^{\infty}\tilde{\beta}_kJe_k.
		\end{equation} We obtain that $(\tilde{X}, \tilde{W})$ is a weak solution to Eq. (\ref{det Eq}) on $(\tilde{\Omega}, \tilde{\mathcal{F}}, (\tilde{\mathcal{F}}_t)_{t \geq 0}, \tilde{\mathbb{P}})$ with $\tilde{X}_0 \equiv x \,\,\,\, \tilde{\mathbb{P}}\text{-a.s.},$ because the $(\tilde{\mathcal{F}}_t)$-adaptedness of $(\tilde{X}_t)_{t \geq 0}$ is trivial and all properties, which hold $\mathbb{P}$-a.s. for X also hold $\tilde{\mathbb{P}}$-a.s. for $\tilde{X}$.\\
		\\ For $t \geq 0$ and $y \in \mathbb{B}$, let $\phi(t,y) \in \text{L}(U)$ be the orthogonal projection onto ker$\,\sigma(t,y)^{\bot} \subseteq U$ and $\psi(t,y) \in \text{L}(U)$ the orthogonal projection onto ker$\,\sigma(t,y)$. By Lemma \ref{Stoch Int well-defined for psi,phi} the stochastic integrals of $J \circ \phi(s,\tilde{X})$ and $J \circ \psi(s,\tilde{X})$ with respect to $\tilde{W},\tilde{W}^1,\tilde{W}^2$ are well-defined. For $t \geq 0$ we define the processes
		$$
		\bar{V}_t^1:= \int_{0}^{t}J \phi(s,\tilde{X})\text{d}\tilde{W}_s + \int_{0}^{t}J  \psi(s,\tilde{X})\text{d}\tilde{W}^1_s,\,\,\,
		\bar{V}_t^2 :=\int_{0}^{t}J  \phi(s,\tilde{X})\text{d}\tilde{W}^2_s + \int_{0}^{t}J  \psi(s,\tilde{X})\text{d}\tilde{W}_s,$$
		which are clearly continuous, $\bar{U}$-valued local $(\tilde{\mathcal{F}}_t)$-martingales on $(\tilde{\Omega},\tilde{\mathcal{F}},(\tilde{\mathcal{F}}_t)_{t \geq 0}, \tilde{\mathbb{P}})$. We collect the following properties of $\phi$ and $\psi$: For each $(t,y) \in \mathbb{R}_+ \times \mathbb{B}$ we have
		\begin{align}
		\phi(t,y) = \phi(t,y)^* &\text{ and } \psi(t,y) = \psi(t,y)^* \label{phi adjoint},\\
		\phi(t,y)^2 = \phi(t,y) &\text{ and } \psi(t,y)^2=\psi(t,y) 	\label{phi proj},\\
		\phi(t,y)+&\psi(t,y) = \text{id}_{U} \label{phi psi id},\\
		\sigma(t,y) = \sigma(t,y) \circ \phi(t,y&) \text{ and } \sigma(t,y) \circ \psi(t,y) = 0_{U} 	\label{ sigma 0 },\\
		\phi(t,y)\circ \psi(t,y)& = 0_{U} = \psi(t,y)\circ \phi(t,y) \label{composition 0}.
		\end{align}
	
		We will now verify that $\bar{V}^1$ and $\bar{V}^2$ are $\tilde{\mathbb{P}}$-independent $\bar{U}$-valued $(\tilde{\mathcal{F}}_t)$-$\bar{Q}$-Wiener processes on $\tilde{\Omega}$.
		\begin{enumerate}
			\item $\bar{V}^1$, $\bar{V}^2$ are $(\tilde{\mathcal{F}}_t)$-$\bar{Q}$-Wiener processes on $(\tilde{\Omega},\tilde{\mathcal{F}},(\tilde{\mathcal{F}}_t)_{t \geq 0}, \tilde{\mathbb{P}})$:\\
			For every $T >0$, both processes are clearly square-integrable, continuous martingales on $[0,T]$ and thus $\bar{V}^1$, $\bar{V}^2 \in \mathcal{M}_c^2(T;\bar{U})$. Hence by the Lévy-characterization (c.f. Proposition \ref{Generalized Levy}), applied to arbitrarily large $T > 0$, it suffices to prove $\ll\bar{V}^i\gg_t = t\bar{Q}\, \,\tilde{\mathbb{P}}\text{-a.s.} \text{ for all } t \geq 0 $ for $i \in \{1,2\}.$ We calculate:
			\begin{align*}
			\ll\bar{V}^1\gg_t &= \int_{0}^{t}(J \phi(s,\tilde{X}) J^{-1} \bar{Q}^{\frac{1}{2}})(J  \phi(s,\tilde{X}) J^{-1} \bar{Q}^{\frac{1}{2}})^*\text{d}s\int_{0}^{t}(J  \psi(s,\tilde{X}) J^{-1} \bar{Q}^{\frac{1}{2}})(J  \psi(s,\tilde{X}) J^{-1} \bar{Q}^{\frac{1}{2}})^*\text{d}s\\&=\int_{0}^{t}J \phi(s,\tilde{X}) J^{-1}  \bar{Q}  (J \phi(s,\tilde{X})J^{-1})^* \text{d}s +\int_{0}^{t}J \psi(s,\tilde{X}) J^{-1}  \bar{Q}  (J \psi(s,\tilde{X}) J^{-1})^* \text{d}s\\&=\int_{0}^{t}J \phi(s,\tilde{X})\phi(s,\tilde{X})^*  J^*+J  \psi(s,\tilde{X}) \psi(s,\tilde{X})^*  J^*\text{d}s\\&=\int_{0}^{t}J  \big(\phi(s,\tilde{X})+\psi(s,\tilde{X})\big) J^*\text{d}s = t\bar{Q},\,\,\,t \geq 0\,\, \tilde{\mathbb{P}}\text{-a.s.}
			\end{align*}In the above calculation we used Proposition \ref{good cor} together with Proposition \ref{Shape qu var, Q, a for stoch int} in the first, $\bar{Q}=JJ^*$ and elementary computation rules for adjoint operators in the second and third, (\ref{phi adjoint}) and (\ref{phi proj}) in the fourth and (\ref{phi psi id}) in the fifth equation. Likewise we obtain 
			$\ll\bar{V}^2\gg_t = t\bar{Q}, \,\,\, t \geq 0\,\, \tilde{\mathbb{P}}\text{-a.s.}$ and therefore $\bar{V}^1$ and $\bar{V}^2$ are $(\tilde{\mathcal{F}}_t)$-$\bar{Q}$-Wiener processes.\\
			\item  $\bar{V}^1$ and $\bar{V}^2$ are $\tilde{\mathbb{P}}$-independent: \\Define $\bar{Q}^{\oplus} \in \text{L}_1^+(\bar{U}\oplus \bar{U})$ through $\bar{Q}^{\oplus}\big((\bar{u}_1,\bar{u}_2)\big) := (\bar{Q}\bar{u}_1,\bar{Q}\bar{u}_2)$. Note that $(\bar{V}^1,\bar{V}^2)$ is clearly a continuous local $\bar{U}\oplus \bar{U}$-valued $(\tilde{\mathcal{F}}_t)$-martingale. We want to prove
		$
			\ll (\bar{V}^1,\bar{V}^2)\gg_t = t\bar{Q}^{\oplus}\,\,\tilde{\mathbb{P}}\text{-a.s.} \text{ for all }t \in [0,T]
			$ for every $T \geq 0$. By Proposition \ref{Prop Qu Var} this is equivalent to 
			\begin{align}\label{kajau}
			& \notag \langle (\bar{V}^1_t,\bar{V}^2_t), (a_1,b_1) \rangle _{\oplus} \cdot \langle(\bar{V}^1_t,\bar{V}^2_t), (a_2,b_2) \rangle _{\oplus} - \langle t\bar{Q}^{\oplus}(a_1,b_1),(a_2,b_2)\rangle _{\oplus}
			\end{align}being an $(\tilde{\mathcal{F}}_t)$-martingale for all $a_1,a_2,b_1,b_2 \in \bar{U}$ and on every $[0,T]$. By definition of $\langle \cdot, \cdot \rangle_{\oplus}$ and since both 
			$$\langle \bar{V}^1_t,a_1\rangle_{\bar{U}} \cdot \langle \bar{V}^1_t,a_2\rangle_{\bar{U}} - \langle t\bar{Q}a_1,a_2\rangle_{\bar{U}} \text{ and } \langle \bar{V}^2_t,b_1\rangle_{\bar{U}} \cdot \langle \bar{V}^2_t,b_2\rangle_{\bar{U}} - \langle t\bar{Q}b_1,b_2\rangle_{\bar{U}}$$ are martingales for all $a_1,a_2,b_1,b_2 \in \bar{U}$, this holds if and only if
			$$\langle \bar{V}^1_t,a_1 \rangle _{\bar{U}} \cdot \langle \bar{V}^2_t,b_2 \rangle _{\bar{U}} + \langle \bar{V}^2_t,b_1 \rangle _{\bar{U}} \cdot \langle \bar{V}^1_t,a_2 \rangle _{\bar{U}}$$ is an $(\mathcal{\tilde{F}}_t)$-martingale for all $a_1,a_2,b_1,b_2 \in \bar{U}$ on $[0,T]$ for all $T >0$. Hence fix $T >0$, $a,b \in \bar{U}$ and consider $\big(\langle \bar{V}^1_t,a \rangle _{\bar{U}} \cdot \langle \bar{V}^2_t,b \rangle _{\bar{U}}\big)_{t \in [0,T]}.$ After multiplying out and interchanging the linear functionals $\langle \cdot, a\rangle _{\bar{U}}$ and $\langle \cdot , b \rangle _{\bar{U}}$ with the stochastic integrals, it is clear by definition of $\bar{V}^1$ and $\bar{V}^2$ and due to Lemma \ref{AAA} that every summand but \begin{equation}\label{one}
			\int_{0}^{t}\langle J \phi(s,\tilde{X}) J^{-1}(\cdot), a \rangle _{\bar{U}}\,\text{d}\bar{\tilde{W}}_s \cdot \int_{0}^{t}\langle J\psi(s,\tilde{X}) J^{-1}(\cdot), b \rangle _{\bar{U}}\,\text{d}\bar{\tilde{W}}_s
			\end{equation} is an $(\mathcal{\tilde{F}}_t)$-martingale on $[0,T]$. Using Lemma 2.4.5 in \cite{RSPDE} we further express the stochastic integrals in (\ref{one}) through
			\begin{align}\label{converg}
			\int_{0}^{t}\langle J\circ \phi(s,\tilde{X})\circ J^{-1}(\cdot), a \rangle _{\bar{U}}\,\text{d}\bar{\tilde{W}}_s  = \sum_{k=1}^{\infty}\int_{0}^{t}\langle J \phi(s,\tilde{X}) J^{-1}(Je_k),a\rangle _{\bar{U}} \,\text{d}\tilde{\beta}_k(s), 
			\end{align}$t \in [0,T] \,\,\mathbb{\tilde{P}}\text{-a.s.}$, where the limit is taken in $L^2\big(\tilde{\Omega}, \mathcal{\tilde{F}},\tilde{\mathbb{P}};C([0,T],\mathbb{R})\big)$ and analogously for the second integral. Here $\tilde{\beta}_k$ and $e_k$ are as in (\ref{zwoa}). We calculate as follows.
			\begin{align}\label{oneseven}
			&\notag \langle \langle \sum_{k=1}^{\infty}\int_{0}^{\cdot}\langle J \phi(s,\tilde{X}) J^{-1}(Je_k),a\rangle _{\bar{U}} \,\text{d}\tilde{\beta}_k(s), \sum_{l=1}^{\infty}\int_{0}^{\cdot}\langle J \psi(s,\tilde{X}) J^{-1}(Je_l),b\rangle _{\bar{U}} \,\text{d}\tilde{\beta}_l(s) \rangle \rangle_t \\&=\notag \bigg(\sum_{k=1}^{\infty}\sum_{l=1}^{\infty}\langle \langle \int_{0}^{\cdot}\langle J \phi(s,\tilde{X}) J^{-1}(Je_k),a\rangle _{\bar{U}} \,\text{d}\tilde{\beta}_k(s), \int_{0}^{\cdot}\langle J \psi(s,\tilde{X}) J^{-1}(Je_l),b\rangle _{\bar{U}} \,\text{d}\tilde{\beta}_l(s)\rangle \rangle\bigg)_t\\& =\bigg(\sum_{k=1}^{\infty}\int_{0}^{\cdot}\langle J \phi(s,\tilde{X}) e_k,a\rangle _{\bar{U}}\cdot\langle J \psi(s,\tilde{X}) e_k,b\rangle _{\bar{U}}\,\text{d}s\bigg)_t \\& =  \notag \int_{0}^{t}\sum_{k=1}^{\infty}\langle J \phi(s,\tilde{X}) e_k,a\rangle _{\bar{U}}\cdot\langle J \psi(s,\tilde{X}) e_k,b\rangle _{\bar{U}}\,\text{d}s =\notag \int_{0}^{t}\sum_{k=1}^{\infty}\langle e_k,\phi(s,\tilde{X})J^*(a)\rangle _U \cdot \langle e_k, \psi(s,\tilde{X})J^*(b)\rangle _U\,\text{d}s \\&=\notag \int_{0}^{t}\langle \phi(s,\tilde{X})J^*(a),\psi(s,\tilde{X})J^*(b)\rangle_U \, \text{d}s =\notag\int_{0}^{t}\langle J^*(a),\phi(s,\tilde{X})\psi(s,\tilde{X})J^*(b)\rangle_U \, \text{d}s = 0  \,\,\, \mathbb{\tilde{P}}\text{-a.s.}
			\end{align}The first equality is due to the convergence on the right-hand side in (\ref{converg}) in $L^2\big(\Omega,\mathcal{F},\mathbb{P}; C([0,T],\mathbb{R})\big)$ and due to the uniqueness of  the covariation process of continuous martingales. For the third equality, consider (\ref{oneseven}) along a subsequence $(N_l)_{l \in \mathbb{N}}$ for which 
			$$\sum_{k=1}^{N_l}\int_{0}^{\cdot}\langle J \phi(s,\tilde{X}) e_k,a\rangle _{\bar{U}}\cdot\langle J \psi(s,\tilde{X}) e_k,b\rangle _{\bar{U}}\,\text{d}s$$ converges uniformly to $ \bigg(\sum_{k=1}^{\infty}\int_{0}^{\cdot}\langle J \phi(s,\tilde{X}) e_k,a\rangle _{\bar{U}}\cdot\langle J \psi(s,\tilde{X}) e_k,b\rangle _{\bar{U}}\,\text{d}s\bigg)_{t\in [0,T]}$ on $[0,T]$ $\mathbb{P}$-a.s. for $l \to +\infty$. Then we clearly have for all $t \in [0,T]$
			\begin{align*}&\bigg(\sum_{k=1}^{\infty}\int_{0}^{\cdot}\langle J \phi(s,\tilde{X}) e_k,a\rangle _{\bar{U}}\cdot\langle J \psi(s,\tilde{X}) e_k,b\rangle _{\bar{U}}\,\text{d}s\bigg)_t = \underset{l \to \infty}{\text{lim}}\sum_{k=1}^{N_l}\int_{0}^{t}\langle J \phi(s,\tilde{X}) e_k,a\rangle _{\bar{U}}\cdot\langle J \psi(s,\tilde{X}) e_k,b\rangle _{\bar{U}}\,\text{d}s\\&= \int_{0}^{t}\underset{l\to \infty}{\text{lim}}\sum_{k=1}^{N_l}\langle J \phi(s,\tilde{X}) e_k,a\rangle _{\bar{U}}\cdot\langle J \psi(s,\tilde{X}) e_k,b\rangle _{\bar{U}}\text{d}s= \int_{0}^{t}\sum_{k=1}^{\infty}\langle J \phi(s,\tilde{X})e_k,a \rangle_{\bar{U}} \cdot \langle J \psi(s,\tilde{X})e_k,b \rangle_{\bar{U}} \text{d}s\end{align*}$\mathbb{P}$-a.s., since we can interchange the limit with the integral, because for fixed $\omega \in \Omega$ the function $t \mapsto ||\phi(t,\tilde{X}(\omega))J^*a||_U \cdot ||\psi(t,\tilde{X}(\omega))J^*b||_U$ is, by Cauchy-Schwarz-inequality, a dominating $L^1([0,T],\text{d}t;\mathbb{R})$-function of the sequence 
			$$\bigg(\sum_{k=1}^{N_l}\langle J \phi(s,\tilde{X}(\omega)) e_k,a\rangle _{\bar{U}}\cdot\langle J \psi(s,\tilde{X}(\omega)) e_k,b\rangle _{\bar{U}}\bigg)_{l \in \mathbb{N}}, $$so that Lebesgue's dominated convergence theorem applies. The last expression equals zero because of (\ref{composition 0}). Hence $(\bar{V}^1,\bar{V}^2)$ is an $(\tilde{\mathcal{F}}_t)$-$\bar{Q}^{\oplus}$-Wiener process.
			Consequently we have the following expression $\mathbb{\tilde{P}}$-a.s. independently of $t \geq 0$:
			\begin{equation}\label{oneeight}
			(\bar{V}_t^1,\bar{V}_t^2) = \sum_{i=1}^{\infty}\sqrt{\bar{\lambda}_{i}}\beta'_{i}(t)\bar{f}_{i},
			\end{equation}where $\bar{f}_i$ is defined through $\bar{f}_i := (f_{\frac{i+1}{2}},0)$ for $i \in 2\mathbb{N}_0+1$ and $\bar{f}_i := (0,f_{\frac{i}{2}})$ for $i \in 2\mathbb{N}$ and the series converges in $L^2(\Omega, \mathcal{F},\mathbb{P};C([0,T],\bar{U}\oplus \bar{U}))$ for every $T >0$. Here $\{f_n|n \in \mathbb{N}\}$ denotes an orthonormal basis of $\bar{U}$ consisting of eigenvectors of $\bar{Q}$. It is obvious that $\{\bar{f}_n|n \in \mathbb{N}\}$ is an orthonormal basis of $\bar{U} \oplus \bar{U}$ consisting of eigenvectors of $\bar{Q}^{\oplus}$. Further $\bar{\lambda}_n$ is the corresponding eigenvalue of $\bar{f}_n$ and $\{\beta'_n|n \in \mathbb{N}\}$ is an independent family of real-valued $(\tilde{\mathcal{F}}_t)$-Brownian motions on $\tilde{\Omega}$. From the definition of $\bar{f}_n$ and (\ref{oneeight}) we immediately obtain $\mathbb{P}$-a.s.:
			\begin{equation*}
			\bar{V}^1_t = \sum_{i \in \mathbb{N}}\sqrt{\bar{\lambda}_{2i-1}}\beta'_{2i-1}(t)f_i \text{ and } 	\bar{V}^2_t = \sum_{i \in \mathbb{N}}\sqrt{\bar{\lambda}_{2i}}\beta'_{2i}(t)f_i,\,\,\,t \geq 0.
			\end{equation*}Since the $\sigma$-algebras $\sigma(\beta'_n(t)|t \geq 0,n \in 2\mathbb{N}_0+1)$ and $\sigma(\beta'_n(t)|t\geq 0,n \in 2\mathbb{N})$ are $\mathbb{\tilde{P}}$-independent and clearly
			$
				\sigma(\bar{V}^1_t|t\geq 0) \subseteq \sigma(\beta'_n(t)|t \geq 0,n \in 2\mathbb{N}_0+1),\,\,\, 	\sigma(\bar{V}^2_t|t\geq 0) \subseteq \sigma(\beta'_n(t)|t \geq 0,n \in 2\mathbb{N}),
		$ we have proved the independence of $(\bar{V}^1_t)_{t \geq 0}$ and $(\bar{V}^2_t)_{t \geq 0}$.
			\end{enumerate}
			In the sequel we will use the notation $V^i$ for the formal  $\mathbb{R}^{\infty}$-Wiener process associated to $\bar{V}^i$. The next step is to prove that $(\tilde{X},V^1)$ is a weak solution to (\ref{det Eq}) on $(\tilde{\Omega},\tilde{\mathcal{F}},(\tilde{\mathcal{F}}_t)_{t \geq 0}, \tilde{\mathbb{P}})$ (in fact even with respect to the bigger filtration $(\mathcal{G}^+_t)_{t \geq 0}$ as we shall see below) and that $\tilde{X}$ and $\bar{V}^2$ are $\tilde{\mathbb{P}}$-independent. 
			\begin{enumerate}
				\item $(\tilde{X},V^1)$ is a weak solution to (\ref{det Eq}) on $(\tilde{\Omega},\tilde{\mathcal{F}},(\tilde{\mathcal{F}}_t)_{t \geq 0}, \tilde{\mathbb{P}})$: \\We prove
				$\int_{0}^{t}\sigma(s,\tilde{X})\text{d}\tilde{W}_s = \int_{0}^{t}\sigma(s,\tilde{X})\text{d}V_s^1,\,\,\,t\geq 0\,\, \tilde{\mathbb{P}}\text{-a.s.}$ Applying (\ref{ sigma 0 }), (\ref{composition 0}) and for the second equality Proposition \ref{localize} (i), we get
				\begin{align}
				&\int_{0}^{t}\sigma(s,\tilde{X})\text{d}\tilde{W}_s \notag= \int_{0}^{t}\sigma(s,\tilde{X}) J^{-1} J \phi(s,\tilde{X}) J^{-1}\text{d}\tilde{\bar{W}}_s\notag=\int_{0}^{t}\sigma(s,\tilde{X}) J^{-1}\text{d}\bigg(\int_{0}^{s}J\phi(r,\tilde{X}) J^{-1}\text{d}\tilde{\bar{W}}_r\bigg) \notag \\&
				=\int_{0}^{t}\sigma(s,\tilde{X}) J^{-1}\text{d}\bigg(\int_{0}^{s}J  \phi(r,\tilde{X}) J^{-1}\text{d}\tilde{\bar{W}}_r+\int_{0}^{s}J \phi(r,\tilde{X}) \psi(r,\tilde{X}) J^{-1}\text{d}\tilde{\bar{W}}^1_r\bigg)\notag\\&
				=\int_{0}^{t}\sigma(s,\tilde{X}) J^{-1}\text{d}\bigg(\int_{0}^{s}J\phi(r,\tilde{X}) J^{-1} J \phi(r,\tilde{X}) J^{-1}\text{d}\tilde{\bar{W}}_r\notag+\int_{0}^{s}J  \phi(r,\tilde{X}) J^{-1} J \psi(r,\tilde{X}) J^{-1}\text{d}\tilde{\bar{W}}^1_r\bigg)\notag.
				\end{align}Note that we can indeed apply Proposition \ref{localize} due to Lemma \ref{Stoch Int well-defined for psi,phi}. Applying Proposition \ref{Asso stoch int}, we can further rewrite the integrator of the last term in the upper chain of equations as follows:
				\begin{align*}
				&\int_{0}^{s}J \phi(r,\tilde{X}) J^{-1}J  \phi(r,\tilde{X}) J^{-1}\text{d}\tilde{\bar{W}}_r+\int_{0}^{s}J\phi(r,\tilde{X}) J^{-1} J \psi(r,\tilde{X}) J^{-1}\text{d}\tilde{\bar{W}}^1_r\notag\\&\label{Int2}=\int_{0}^{s}J\phi(r,\tilde{X})J^{-1}\text{d}\bigg(\int_{0}^{r}J \phi(\alpha,\tilde{X}) J^{-1}\text{d}\tilde{\bar{W}}_{\alpha}\bigg)+\int_{0}^{s}J\phi(r,\tilde{X})J^{-1}\text{d}\bigg(\int_{0}^{r}J \psi(\alpha,\tilde{X}) J^{-1}\text{d}\tilde{\bar{W}}^1_{\alpha}\bigg)\\&
				=\int_{0}^{s}J\phi(r,\tilde{X}) J^{-1}\text{d}\bar{V}^1_r.\notag
				\end{align*} Finally, let us again apply Proposition \ref{localize} (i) and the two chains of equations above to obtain the following:
				\begin{align*}
				&\int_{0}^{t}\sigma(s,\tilde{X})\text{d}\tilde{W}_s=\int_{0}^{t}\sigma(s,\tilde{X}) J^{-1}\text{d}\bigg(\int_{0}^{s}J\phi(r,\tilde{X}) J^{-1}\text{d}\bar{V}^1_r\bigg)\\&=\int_{0}^{t}\sigma(s,\tilde{X}) \phi(s,\tilde{X})J^{-1}\text{d}\bar{V}^1_s = \int_{0}^{t}\sigma(s,\tilde{X})\text{d}V^1_s,
				\end{align*}which holds $\tilde{\mathbb{P}}$-a.s. for each $t \geq 0$ with zero set independent of $t \geq 0$.\\
				\item $\tilde{X}$ and $\bar{V}^2$ are independent on $\tilde{\Omega}$ with respect to $\tilde{\mathbb{P}}:$\\
				We first show that $(\tilde{X},\bar{V}^1)$ remains a weak solution when replacing the filtration $(\tilde{\mathcal{F}}_t)_{t \geq 0}$ by $(\mathcal{G}^+_t)_{t \geq 0}$, which is the right-continuous filtration associated to
				$\mathcal{G}_t := \tilde{\mathcal{F}}_t \vee \sigma(\bar{V}^2_s|s \geq 0), \, t \geq 0.$ By Lemma \ref{Independ. of right-c. filtr} we only need to show that $\bar{V}^1$ is a $(\mathcal{G}_t)$-$\bar{Q}$-Wiener process. Obviously $$\mathcal{G}_t = \tilde{\mathcal{F}}_t \vee \sigma(\bar{V}^2_s|s > t) = \tilde{\mathcal{F}}_t \vee \sigma(\bar{V}^2_s-\bar{V}^2_t|s \geq t).$$
				Since $(\bar{V}^1,\bar{V}^2)$ is an $(\tilde{\mathcal{F}}_t)_{t \geq 0}$-$\bar{Q}^{\oplus}$-Wiener process on $(\tilde{\Omega},\tilde{\mathcal{F}},(\tilde{\mathcal{F}}_t)_{t \geq 0}, \tilde{\mathbb{P}})$, Lemma \ref{TopLemma} implies the independence of $	\tilde{\mathcal{F}}_t$ and $\sigma(\bar{V}^1_s-\bar{V}^1_t|s \geq t) \vee \sigma(\bar{V}^2_s-\bar{V}^2_t|s \geq t) \text{ for }t \geq 0$.
				Therefore for $A_t \in \tilde{\mathcal{F}}_t, D \in \sigma(\bar{V}^2_s-\bar{V}^2_t|s \geq t), B \in \sigma(\bar{V}^1_s-\bar{V}^1_t|s \geq t)$ we have
				\begin{equation*}
				\tilde{\mathbb{P}}(B \cap D \cap A_t) = \tilde{\mathbb{P}}(B \cap D) \cdot \tilde{\mathbb{P}}(A_t)=\tilde{\mathbb{P}}(B)\cdot \tilde{\mathbb{P}}(D) \cdot \tilde{\mathbb{P}}(A_t) = \tilde{\mathbb{P}}(B)\cdot \tilde{\mathbb{P}}(D \cap A_t).
				\end{equation*}Since sets of the form $A_t \cap D$ for $A_t$ and $D$ as above form a $\cap$-stable generator of $\mathcal{G}_t$, we obtain the independence of $\sigma(\bar{V}^1_s-\bar{V}^1_t|s \geq t) $ and  $\mathcal{G}_t,$
				which yields that $\bar{V}^1$ is a $(\mathcal{G}_t)$-$\bar{Q}$-Wiener process on $\tilde{\Omega}$.
				\\ \\To obtain the desired independence of $\tilde{X}$ and $\bar{V}^2$, we now apply Lemma \ref{Long rcd lemma} to the weak solution $(\tilde{X},\bar{V}^1)$ on $(\tilde{\Omega},\tilde{\mathcal{F}},(\mathcal{G}^+_t)_{t \geq 0}, \tilde{\mathbb{P}})$ and obtain that for $\tilde{\mathbb{P}}$-a.a. $\tilde{\omega} \in \tilde{\Omega}$ the pair $(\Pi_1, \hat{\Pi}_2)$ is a weak solution on $(\bar{\Omega},\bar{\mathcal{F}}^{\tilde{\omega}},(\bar{\mathcal{G}}_t^{+\tilde{\omega}})_{t \geq 0},\mathbb{P}_{\tilde{\omega}})$ with $\mathbb{P}_{\tilde{\omega}}\circ \Pi_1(0)^{-1} = \delta_x = \tilde{\mathbb{P}}\circ \tilde{X}^{-1}_0.$ Here $(\mathbb{P}_{\tilde{\omega}})_{\tilde{\omega}\in \tilde{\Omega}}$ is the regular conditional distribution of $(\tilde{X},\bar{V}^1): \Omega \to \mathbb{B}\times \mathbb{W}_0$ with respect to $\mathcal{G}^+_0$. All other notations are as in Lemma \ref{Long rcd lemma}. By assumption, uniqueness in law given $\delta_x$ holds for the stochastic equation. Hence the measures $\mathbb{P}_{\tilde{\omega}}\circ \Pi_1^{-1}$ on $\mathcal{B}(\mathbb{B})$ are the same for $\tilde{\mathbb{P}}$-a.a. $\tilde{\omega} \in \tilde{\Omega}.$ Therefore we have for all $D \in \mathcal{G}_0^+$ and $A \in \mathcal{B}(\mathbb{B})$:
				\begin{align*}
				&\tilde{\mathbb{P}}(D \cap \{\tilde{X} \in A\}) = \int_{D}\mathds{1}_{\{(\tilde{X},\bar{V}^1) \,\in \,\Pi_1^{-1}(A)\}}\text{d}\tilde{\mathbb{P}}(\tilde{\omega})= \int_{D}\mathbb{P}_{\tilde{\omega}}(\Pi_1^{-1}(A))\text{d}\tilde{\mathbb{P}}(\tilde{\omega}) = \mathbb{P}_{\tilde{\omega}}(\Pi_1^{-1}(A))\cdot \tilde{\mathbb{P}}(D)\\&= \int_{\tilde{\Omega}}\mathbb{P}_{\tilde{\omega}}(\Pi_1^{-1}(A))\text{d}\tilde{\mathbb{P}}(\tilde{\omega})\cdot\tilde{\mathbb{P}}(D) = \tilde{\mathbb{P}}\big((\tilde{X},\bar{V}^1)^{-1}(\Pi_1^{-1}(A))\big)\cdot \tilde{\mathbb{P}}(D) = \tilde{\mathbb{P}}(\tilde{X} \in A)\cdot \tilde{\mathbb{P}}(D).
				\end{align*}The third equality holds because the map $\tilde{\omega }\mapsto \mathbb{P}_{\tilde{\omega}}(\Pi_1^{-1}(A))$ is $\tilde{\mathbb{P}}$-a.s. constant for every $A \in \mathcal{B}(\mathbb{B})$. But this shows that $\tilde{X}$ and $\mathcal{G}^+_0$ are $\tilde{\mathbb{P}}$-independent. By definition of the filtration $(\mathcal{G}^+_t)_{t \geq 0}$, then also $\tilde{X}$ and $\bar{V}^2$ are $\tilde{\mathbb{P}}$-independent.
				\end{enumerate}
				For the final step of the proof define $\chi: \mathbb{R}_+\times \mathbb{B} \to \text{Lin}(H,U)$ with domain $\mathcal{D}(\chi(t,y)):= \text{Im}\,\sigma(t,y)$ for every $(t,y) \in [0,T]\times \mathbb{B}$ through  $\chi(t,y) := \sigma(t,y)^{-1}_{\text{ker}^{\perp}}$. Here $\sigma(t,y)^{-1}_{\text{ker}^{\perp}}$ denotes the inverse of $\sigma(t,y)$ from $\text{Im}\,\sigma(t,y)$ to $\text{ker}\,\sigma(t,y)^{\perp}$. We note $\chi(t,y)\circ \sigma(t,y) = \phi(t,y)$ for all $(t,y) \in \mathbb{R}_+\times \mathbb{B}$. Using Proposition \ref{localize} (ii) for the second equality below, we obtain
				\begin{equation}\label{difficult integral}
				\int_{0}^{t}J \phi(s,\tilde{X})\text{d}\tilde{W}_s= \int_{0}^{t}J \chi(s,\tilde{X}) \sigma(s,\tilde{X})\text{d}\tilde{W}_s = \int_{0}^{t}J \chi(s,\tilde{X})\text{d}N_s,\,\,\,t \geq 0
				\end{equation}$\tilde{\mathbb{P}}$-a.s., where $\label{N}
				N_t := \tilde{X}_t-\tilde{X}_0-\int_{0}^{t}b(s,\tilde{X})\text{d}s.
				$ We continue with
				\begin{align}\label{needthis}
				\bar{\tilde{W}}_t \notag&= \int_{0}^{t}J  \text{id}_{U} J^{-1}\text{d}\bar{\tilde{W}}_s=\int_{0}^{t}J (\phi(s,\tilde{X})+\psi(s,\tilde{X})) J^{-1}\text{d}\bar{\tilde{W}}_s \notag\\&= \int_{0}^{t}J \chi(s,\tilde{X})\text{d}N_s+\int_{0}^{t}J \psi(s,\tilde{X}) J^{-1} J  \phi(
				s,\tilde{X})\text{d}\tilde{W}^2_s+\int_{0}^{t}J\psi(s,\tilde{X}) J^{-1}  J  \psi(s,\tilde{X})\text{d}\tilde{W}_s\notag\\&=\int_{0}^{t}J\chi(s,\tilde{X})\text{d}N_s+\int_{0}^{t}J\psi(s,\tilde{X}) J^{-1}\text{d}\bar{V}_s^2,\,\,\, t\geq 0 \,\, \tilde{\mathbb{P}}\text{-a.s.}
				\end{align}For the third equality, (\ref{difficult integral}) and the identities (\ref{composition 0}) and (\ref{phi proj}) are applied. The last one holds due to the linearity in the integrator and Proposition \ref{Asso stoch int}. As the first summand of (\ref{needthis}) is a measurable functional of $\tilde{X}$ and $\bar{V}^2$ is independent of $\tilde{X}$, we conclude that $\tilde{\mathbb{P}}\circ(\tilde{X},\bar{\tilde{W}})$ is uniquely determined by $\tilde{\mathbb{P}}\circ\tilde{X}$. We elaborate this step in more detail at the end of Appendix A. Since $\tilde{X} = X \circ \pi_1$ and $\tilde{\bar{W}} =  \bar{W} \circ \pi_1$ we obtain
				$$\mathbb{P}(X \in A, \bar{W} \in B) = \tilde{\mathbb{P}}(\tilde{X} \in A, \tilde{\bar{W}}\in B) \text{ for all } A \in \mathcal{B}(\mathbb{B}), B \in \mathcal{B}(\mathbb{W}_0),$$ where $\tilde{\pi}_1:\tilde{\Omega} \to \Omega$,  $\tilde{\pi}_1((\omega,\omega')) := \omega$ for $(\omega,\omega') \in \tilde{\Omega}$. Therefore also $\mathbb{P}\circ(X,\bar{W})^{-1}$ is uniquely determined by $\mathbb{P}\circ X^{-1}$, because clearly $\tilde{\mathbb{P}}\circ \tilde{X}^{-1} = \mathbb{P}\circ X^{-1}$. Hence we have proved joint uniqueness in law given $\delta_x$. The ``in particular"-assertion of the statement is now a trivial consequence of what we just proved. \qed 
\\ \\
\noindent{\large\textbf{Appendix}}
\appendix
\renewcommand{\thesection}{A}
\section{Auxiliary lemmata and proofs}
Again, let $H$ be a separable, (infinite-dimensional) real Hilbert space.
\begin{lem}\label{Independ. of right-c. filtr}
	Let $M$ be a continuous $H$-valued stochastic process on a probability space $(\Omega, \mathcal{F}, \mathbb{P})$, which is adapted to a not necessarily right-continuous filtration $(\mathcal{F}_t)_{t \geq 0}$. If $M_t-M_s$ is independent of $\mathcal{F}_s$ for all $0 \leq s < t$, then $M_t-M_s$ is also independent of $\mathcal{F}^+_s$ for all $0\leq s < t$, where $(\mathcal{F}^+_t)_{t \geq 0}$ denotes the right-continuous filtration associated to $(\mathcal{F}_t)_{t \geq 0}$.
\end{lem}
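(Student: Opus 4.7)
The plan is to reduce the right-continuous case to the hypothesis by a limiting argument along a sequence $s_n \downarrow s$ approaching $s$ from above. Fix $0 \leq s < t$ and choose a strictly decreasing sequence $(s_n)_{n \in \mathbb{N}}$ with $s < s_n < t$ and $s_n \downarrow s$. For each $n$, the hypothesis gives that $M_t - M_{s_n}$ is independent of $\mathcal{F}_{s_n}$, and since $\mathcal{F}_s^+ = \bigcap_{\varepsilon > 0} \mathcal{F}_{s + \varepsilon} \subseteq \mathcal{F}_{s_n}$, the increment $M_t - M_{s_n}$ is in particular independent of $\mathcal{F}_s^+$.

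By the path-continuity of $M$ in $H$, we have $M_{s_n} \to M_s$ almost surely, hence $M_t - M_{s_n} \to M_t - M_s$ almost surely (and thus in probability) as $n \to \infty$. To conclude, I would use that independence passes to almost-sure limits. Concretely, for every $y \in H$ and $A \in \mathcal{F}_s^+$, dominated convergence (the integrands are bounded by $1$) combined with the independence established above gives
\begin{align*}
\mathbb{E}\bigl[e^{i\langle y, M_t - M_s\rangle_H} \mathds{1}_A\bigr]
&= \lim_{n \to \infty} \mathbb{E}\bigl[e^{i\langle y, M_t - M_{s_n}\rangle_H} \mathds{1}_A\bigr] \\
&= \lim_{n \to \infty} \mathbb{E}\bigl[e^{i\langle y, M_t - M_{s_n}\rangle_H}\bigr] \cdot \mathbb{P}(A)
 = \mathbb{E}\bigl[e^{i\langle y, M_t - M_s\rangle_H}\bigr] \cdot \mathbb{P}(A).
\end{align*}
Since the joint characteristic function of $(M_t - M_s, \mathds{1}_A)$ factorizes for every $y \in H$, uniqueness of the Fourier transform on the separable Hilbert space $H$ yields that $M_t - M_s$ is $\mathbb{P}$-independent of $\mathcal{F}_s^+$.

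There is no real obstacle here; the only mild subtlety is ensuring that independence of a random vector from a $\sigma$-algebra is characterized by the factorization of the characteristic function in the presence of an indicator, which is standard for separable Hilbert-space-valued random variables. The path-continuity of $M$ is essential to produce the almost-sure convergence $M_{s_n} \to M_s$; without it, one would need at best convergence in probability along a subsequence, which however would also suffice for the characteristic-function argument.
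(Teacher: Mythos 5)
Your proof is correct and follows essentially the same route as the paper: both approximate $s$ from the right, observe that $\mathcal{F}_s^+ \subseteq \mathcal{F}_{s_n}$ so the hypothesis applies at time $s_n$, and pass to the limit using path-continuity of $M$ together with dominated convergence for bounded test functions. The only cosmetic difference is the choice of test functions --- you use characteristic functions $e^{i\langle y,\cdot\rangle_H}$ and uniqueness of the Fourier transform on the separable Hilbert space $H$, while the paper uses continuous $[0,1]$-valued functions increasing to indicators of open sets --- and both close the argument equally well.
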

\begin{proof}
	It suffices to prove the following claim: For $0 \leq s < t$, $X:= M_t-M_s$, $\mathcal{O} \subseteq H$ open and $A_s \in \mathcal{F}_s^+$ we have
	$
	\mathbb{E}[\mathds{1}_{X \in \mathcal{O}}\cdot \mathds{1}_{A_s}] = \mathbb{E}[\mathds{1}_{X \in \mathcal{O}}]\cdot \mathbb{E}[\mathds{1}_{A_s}].
	$ For such $\mathcal{O} \subseteq H$ there exist continuous functions $(
	f_n)_{n \in \mathbb{N}}$ such that $f_n(H) \subseteq [0,1]$ for every $n \in \mathbb{N}$ and $f_n \nearrow \mathds{1}_{\mathcal{O}}$ pointwise. Hence, by Lebesgue's dominated convergence theorem it suffices to verify 
	\begin{equation}\label{0}
	\mathbb{E}[f (X) \cdot \mathds{1}_{A_s}] = \mathbb{E}[f ( X)]\cdot \mathbb{E}[\mathds{1}_{A_s}]
	\end{equation}for every continuous $f: H \to [0,1]$. By assumption, $f \circ (M_t-M_{s+\frac{1}{n}})$ is independent of $\mathcal{F}_{s+\frac{1}{n}}$ for any $n \in \mathbb{N}$. Hence we get \begin{equation}\label{1}
	\mathbb{E}[f \circ (M_t-M_{s+\frac{1}{n}})\cdot \mathds{1}_{A_s}] = \mathbb{E}[f \circ (M_t-M_{s+\frac{1}{n}})]\cdot \mathbb{E}[\mathds{1}_{A_s}]
	\end{equation} and the continuity of $M$ implies $f \circ (M_t-M_{s+\frac{1}{n}}) \underset{n \to \infty}{\to} f \circ X$ $\mathbb{P}$-a.s. Hence, and since every $f \circ (M_t-M_{s+\frac{1}{n}})$ is bounded by 1 allows to apply Lebesgue on both sides of (\ref{1}). Hence taking limits on both sides in this equation, we obtain (\ref{0}), which proves the assertion.
\end{proof}
\begin{lem}\label{TopLemma}
	Let $Q \in \text{L}^+_1(H)$ and $ Q' \in \text{L}^+_1(H\oplus H)$. Let $W^1$ and $W^2$ be two $H$-valued $(\mathcal{F}_t)$-$Q$-Wiener processes on a stochastic basis $(\Omega, \mathcal{F}, (\mathcal{F}_t)_{t \geq 0}, \mathbb{P})$ such that $(W^1,W^2)$ is an $H\oplus H$-valued $(\mathcal{F}_t)$-$Q'$-Wiener process on $(\Omega, \mathcal{F}, (\mathcal{F}_t)_{t \geq 0}, \mathbb{P})$. Then
	$\mathcal{F}_s$ is independent of $\sigma(W^1_t-W^1_s|t \geq s) \vee \sigma(W^2_t-W^2_s|t \geq s)$ for all $s \geq 0$.
\end{lem}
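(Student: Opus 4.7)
The plan is to reduce the statement to the independent-increments property of the $H \oplus H$-valued Wiener process $(W^1, W^2)$, and then extend from the finite-dimensional marginals to the full $\sigma$-algebra by a standard $\pi$-system argument.

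First, I observe that since $(W^1, W^2)$ is an $(\mathcal{F}_t)$-$Q'$-Wiener process in $H \oplus H$, it has $(\mathcal{F}_t)$-independent increments: for any $u \leq v$ the increment $(W^1_v - W^1_u, W^2_v - W^2_u)$ is $\mathbb{P}$-independent of $\mathcal{F}_u$. Fix $s \geq 0$ and an arbitrary finite partition $s = t_0 < t_1 < \ldots < t_n$. An induction on $n$ then shows that the combined increment vector
\begin{equation*}
\bigl((W^1_{t_1}-W^1_{t_0}, W^2_{t_1}-W^2_{t_0}),\, \ldots,\, (W^1_{t_n}-W^1_{t_{n-1}}, W^2_{t_n}-W^2_{t_{n-1}})\bigr)
\end{equation*}
is $\mathbb{P}$-independent of $\mathcal{F}_s$. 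Indeed, the $j$-th block is independent of $\mathcal{F}_{t_{j-1}}$, which contains $\mathcal{F}_s$ together with all earlier blocks, so at each step one combines the known independence of the previous blocks from $\mathcal{F}_s$ with the increment property to extend.

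Second, since $(W^1_{t_k} - W^1_s, W^2_{t_k} - W^2_s) = \sum_{j=1}^{k} (W^1_{t_j}-W^1_{t_{j-1}}, W^2_{t_j}-W^2_{t_{j-1}})$ is a continuous (hence Borel-measurable) functional of the combined increment vector above, I deduce that the vector
\begin{equation*}
\bigl(W^1_{t_1}-W^1_s,\, W^2_{t_1}-W^2_s,\, \ldots,\, W^1_{t_n}-W^1_s,\, W^2_{t_n}-W^2_s\bigr)
\end{equation*}
is also $\mathbb{P}$-independent of $\mathcal{F}_s$. Consequently, the family
\begin{equation*}
\mathcal{D} := \bigl\{\{W^1_{t_1}-W^1_s \in B_1, W^2_{t_1}-W^2_s \in B_1',\ldots, W^1_{t_n}-W^1_s \in B_n, W^2_{t_n}-W^2_s \in B_n'\} : n \in \mathbb{N},\, s \leq t_1 < \ldots < t_n,\, B_i, B_i' \in \mathcal{B}(H)\bigr\}
\end{equation*}
is a $\cap$-stable collection of sets, each of which is independent of $\mathcal{F}_s$.

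Third, I note that $\mathcal{D}$ generates the $\sigma$-algebra $\sigma(W^1_t - W^1_s, W^2_t - W^2_s \mid t \geq s)$, which equals $\sigma(W^1_t-W^1_s \mid t \geq s) \vee \sigma(W^2_t-W^2_s \mid t \geq s)$. By the standard $\pi$-$\lambda$ (monotone class) theorem applied to $\mathcal{D}$ and any event in $\mathcal{F}_s$, independence extends from the $\pi$-system $\mathcal{D}$ to the generated $\sigma$-algebra, yielding the claim. The only delicate point is verifying that the combined increment vector (with both coordinates present) is independent of $\mathcal{F}_s$; this is precisely where the hypothesis that $(W^1, W^2)$ is a \emph{joint} Wiener process in $H \oplus H$, rather than merely two marginal Wiener processes, is essential, and the rest is a routine $\pi$-system extension.
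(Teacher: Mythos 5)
Your proof is correct and takes essentially the same route as the paper: both arguments rest on the observation that the $H\oplus H$-valued $(\mathcal{F}_t)$-Wiener property of $(W^1,W^2)$ makes the $\sigma$-algebra of future joint increments independent of $\mathcal{F}_s$, combined with the fact that $\sigma(W^1_t-W^1_s\mid t\geq s)\vee\sigma(W^2_t-W^2_s\mid t\geq s)$ is contained in $\sigma\big((W^1_t,W^2_t)-(W^1_s,W^2_s)\mid t\geq s\big)$ because each individual increment is a measurable image of the joint one. The only difference is that you prove the standard independence of $\mathcal{F}_s$ from the future-increment $\sigma$-algebra explicitly via a partition induction and a $\pi$-system extension, whereas the paper cites this as a known property of Wiener processes.
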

\begin{proof}
	Since by assumption $(W^1,W^2)$ is a Wiener process with respect to $(\mathcal{F}_t)_{t \geq 0}$, the independence of $\mathcal{F}_s$ and $\sigma\big((W^1_t,W^2_t)-(W^1_s,W^2_s)|t \geq s\big)$ for all $s \geq 0$ follows. Hence it suffices to show
	\begin{equation*}\label{gh}
	\sigma\big((W^1_t,W^2_t)-(W^1_s,W^2_s)|t \geq s\big) \supseteq \sigma(W^1_t-W^1_s|t \geq s) \vee \sigma(W^2_t-W^2_s|t \geq s).
	\end{equation*}Indeed, for $t \geq s$: $
	(W^1_t-W^1_s)^{-1}(A) = \big((W^1_t,W^2_t)-(W^1_s,W^2_s)\big)^{-1}(A\times H) \in \sigma\big((W^1_t,W^2_t)-(W^1_s,W^2_s)|t \geq s\big)$ for all $A \in \mathcal{B}(H)$. Proceeding in the same way for $W^2$, we obtain the assertion.
\end{proof}
\noindent \textbf{Proof of Lemma \ref{AAA}}: Fix $\phi_1$ and $\phi_2$ as above and let $\tau: \Omega \to \mathbb{R}_+$ be an $(\mathcal{F}_t)$-stopping time such that $\mathbb{P}(\tau \leq T) = 1$ for some $T > 0$. By \cite[Lemma 2.3.9]{RSPDE}, we obtain
$$\mathbb{E}\bigg[\int_{0}^{\tau}\phi_1(s)\text{d}W^1(s)\cdot \int_{0}^{\tau}\phi_2(s)\text{d}W^2(s)\bigg] = \mathbb{E}\bigg[\int_{0}^{T}\mathds{1}_{]0,\tau]}\phi_1(s)\text{d}W^1(s)\cdot \int_{0}^{T}\mathds{1}_{]0,\tau]}\phi_2(s)\text{d}W^2(s)\bigg]$$ and clearly $\mathds{1}_{]0,\tau]}\phi_k \in \Lambda^2_T(W^k,U,\mathbb{R},\mathcal{P}_T)$ for $k \in \{1,2\}$. By the construction of the stochastic integral (see Proposition \ref{Isometry}), it is sufficient to prove 
\begin{equation}\label{aiai}
\mathbb{E}\bigg[\int_{0}^{T}\Phi_1(s)\text{d}W^1(s)\cdot \int_{0}^{T}\Phi_2(s)\text{d}W^2(s)\bigg] = 0
\end{equation}for all $\Phi_k \in \mathcal{E}_T(U,\mathbb{R})$. To this end let $0 = t_0 < ... < t_N = T$ be a finite partition of $[0,T]$ and set $\Phi_k := \sum_{l=0}^{N-1}B^k_l\mathds{1}_{]t_{l},t_{l+1}]}$ for $k \in \{1,2\}$. Recall that each $B_l^k$ is a map from $\Omega$ to $\text{L}(U,\mathbb{R})$, which takes finitely many values $\{\beta^k_{l_1},...,\beta^k_{l_{K_k}}\}$ and is $\mathcal{F}_{t_l}$-measurable. We may assume that $\Phi_1$ and $\Phi_2$ have the same partition. Then 
\begin{align*}
&\mathbb{E}\bigg[\int_{0}^{T}\Phi_1(s)\text{d}W^1(s)\cdot \int_{0}^{T}\Phi_2(s)\text{d}W^2(s)\bigg]= 	\sum_{l,m=0}^{N-1}\mathbb{E}\bigg[B_l^1(W^1_{t_{l+1}\wedge T}-W^1_{t_l \wedge T})\cdot B_m^2(W^2_{t_{m+1}\wedge T}-W^2_{t_m \wedge T})\bigg]  \\&=
\sum_{l,m=0}^{N-1}\sum_{i=1}^{K_1}\sum_{j=1}^{K_2}\mathbb{E}\bigg[\mathbb{E}\big[\mathds{1}_{B^1_l = \beta_{l_i}^1}\mathds{1}_{B^2_m = \beta_{m_j}^2}\beta_{l_i}^1(W^1_{t_{l+1}\wedge T}-W^1_{t_l \wedge T})\cdot \beta_{m_j}^2(W^2_{t_{m+1}\wedge T}-W^2_{t_m \wedge T})\big{|}\mathcal{F}_{t_l \vee t_m}\big]\bigg]\\&=	\sum_{l,m=0, l \neq m}^{N-1}\sum_{i=1}^{K_1}\sum_{j=1}^{K_2}\mathbb{E}\bigg[\underbrace{\mathbb{E}[\beta_{l_i}^1(W^1_{t_{l+1}\wedge T}-W^1_{t_l \wedge T})]}_{=\beta_{l_i}^1(\mathbb{E}[W^1_{t_{l+1}\wedge T}-W^1_{t_l \wedge T}])=0}\mathbb{E}\big[\mathds{1}_{B^1_l = \beta_{l_i}^1}\mathds{1}_{B^2_m = \beta_{m_j}^2}\cdot \beta_{m_j}^2(W^2_{t_{m+1}\wedge T}-W^2_{t_m \wedge T})\big{|}\mathcal{F}_{t_l \vee t_m}\big]\bigg] \\&+\sum_{l=0}^{N-1}\sum_{i=1}^{K_1}\sum_{j=1}^{K_2}\mathbb{E}\bigg[\mathbb{E}\big[\mathds{1}_{B^1_l = \beta_{l_i}^1}\mathds{1}_{B^2_l = \beta_{l_j}^2}\beta_{l_i}^1(W^1_{t_{l+1}\wedge T}-W^1_{t_l \wedge T})]\cdot \beta_{l_j}^2(W^2_{t_{l+1}\wedge T}-W^2_{t_l \wedge T})\big{|}\mathcal{F}_{t_l }\big]\bigg]\\&= 
\sum_{l=0}^{N-1}\sum_{i=1}^{K_1}\sum_{j=1}^{K_2}\mathbb{E}\bigg[\underbrace{\mathbb{E}[\beta_{l_i}^1(W^1_{t_{l+1}\wedge T}-W^1_{t_l \wedge T})\cdot \beta_{l_j}^2(W^2_{t_{l+1}\wedge T}-W^2_{t_l \wedge T})]}_{=\mathbb{E}[\beta_{l_i}^1(W^1_{t_{l+1}\wedge T}-W^1_{t_l \wedge T})]\cdot \mathbb{E}[\beta_{l_j}^2(W^2_{t_{l+1}\wedge T}-W^2_{t_l \wedge T})] = 0}\cdot \mathbb{E}\big[\mathds{1}_{B^1_l = \beta_{l_i}^1}\mathds{1}_{B^2_l = \beta_{l_j}^2}\big{|}\mathcal{F}_{t_l}\big]\bigg] = 0.
\end{align*}For the third equality we used that $W^1$ and $W^2$ are $(\mathcal{F}_t)$-Wiener processes and the $\mathcal{F}_{t_l \vee t_m}$-measurability of $\mathds{1}_{B_l^1=\beta_{l_i}^1}$ and $\mathds{1}_{B^2_m = \beta_{m_j}^2}$ and assumed (w.l.o.g.; else reverse the roles) $t_l > t_m$. In the fourth equality we once more used $\{B^1_l = \beta_{l_i}^1\}$, $\{B^2_l = \beta_{l_j}^2\} \in \mathcal{F}_{t_l}$ and the independence of $\beta_{l_i}^1(W^1_{t_{l+1}\wedge T}-W^1_{t_l \wedge T})\cdot \beta_{l_j}^2(W^2_{t_{l+1}\wedge T}-W^2_{t_l \wedge T})$ from $\mathcal{F}_{t_l}$, which follows from the independence of $W^1$ and $W^2$. This gives (\ref{aiai}). \qed \\ \\Finally, we elaborate the conclusion of the proof of Theorem \ref{UL impl JUL} in detail. Consider the situation of the final step of the proof.\\ \\
\textbf{Conclusion of proof of Theorem \ref{UL impl JUL}}: Consider a bounded $\mathcal{B}(\mathbb{B})\otimes \mathcal{B}(\mathbb{W}_0)$-measurable function $F: \mathbb{B}\times \mathbb{W}_0 \to \mathbb{R}$, which is continuous with respect to the topology of pointwise convergence in $\mathbb{W}_0$. We show that for every such $F$ the integral $\int_{\tilde{\Omega}}F(\tilde{X},\bar{\tilde{W}})\text{d}\tilde{\mathbb{P}}(\tilde{\omega})$ only depends on the distribution of $\tilde{X}$ under $\tilde{\mathbb{P}}$. Indeed, we can calculate as follows: 
\begin{align*}
&\int_{\tilde{\Omega}}F(\tilde{X},\bar{\tilde{W}})\text{d}\tilde{\mathbb{P}}(\tilde{\omega}) = \int_{\tilde{\Omega}}F\big(\tilde{X},G(\tilde{X})+\int_{0}^{\cdot}J\psi(s,\tilde{X})J^{-1}\text{d}\bar{V}^2_s\big)\text{d}\tilde{\mathbb{P}}(\tilde{\omega}) \\&= \int_{\tilde{\Omega}}F\bigg(\tilde{X},G(\tilde{X})+\sum_{l=1}^{\infty}\sum_{k=1}^{\infty}\int_{0}^{\cdot}\langle J\psi(s,\tilde{X})J^{-1}(Je_k),\bar{u}_l \rangle_{\bar{U}}\text{d}\beta_k(s)\cdot \bar{u}_l\bigg)\text{d}\tilde{\mathbb{P}}(\tilde{\omega}) \\& = \underset{n \to \infty}{\text{lim}}\underset{m \to \infty}{\text{lim}}\int_{\tilde{\Omega}}F\bigg(\tilde{X},G(\tilde{X})+\sum_{l=1}^{n}\sum_{k=1}^{m}\bigg[\underset{j \to \infty}{\text{lim}}\sum_{i=0}^{N_j-1}\langle J \psi(t_i,\tilde{X})e_k,\bar{u}_l\rangle_{\bar{U}}\big(\beta_k(t_{i+1}\wedge \cdot)-\beta_k(t_i \wedge \cdot)\big)\bigg] \bar{u}_l\bigg)\text{d}\tilde{\mathbb{P}}(\tilde{\omega}) \\& = \underset{n \to \infty}{\text{lim}}\underset{m \to \infty}{\text{lim}}\underset{j \to \infty}{\text{lim}}\underbrace{\int_{\tilde{\Omega}}F\bigg(\tilde{X},G(\tilde{X})+\sum_{l=1}^{n}\sum_{k=1}^{m}\sum_{i=0}^{N_j-1}\langle J \psi(t_i,\tilde{X})e_k,\bar{u}_l\rangle_{\bar{U}}\big(\beta_k(t_{i+1}\wedge \cdot)-\beta_k(t_i \wedge \cdot)\big) \bar{u}_l\bigg)\text{d}\tilde{\mathbb{P}}(\tilde{\omega})}_{=: F(n,m,j)}.
\end{align*}All limits are understood in the sense of pointwise convergence in $ t\geq 0$ for fixed $\tilde{\omega} \in \tilde{\Omega}$ taken out of a set of full $\tilde{\mathbb{P}}$-measure. For an orthonormal basis $(\bar{u}_l)_{l \in \mathbb{N}}$ of $\bar{U}$, the second equality follows directly from Proposition 2.4.5. in \cite{RSPDE} and $\bar{V}^2 = \sum_{k=1}^{\infty}Je_k \beta_k$ $\tilde{\mathbb{P}}$-a.s. (where $(e_k)_{k \in \mathbb{N}}$ denotes the fixed orthonormal basis of $U$ and $(\beta_k)_{k \in \mathbb{N}}$ is a family of independent, real-valued $(\tilde{\mathcal{F}}_t)$-Brownian motions on $\tilde{\Omega}$). The third equality holds due to \cite[Remark 2.8.7]{Krylov} for suitable $(N_j)_{j \in \mathbb{N}}$ and an increasing sequence $(t_i)_{i \in \mathbb{N}} \subseteq \mathbb{R}_+$. All limits can be interchanged with $F$ due to the continuity of $F$ in the aforementioned sense and can be taken out of the integral, since $F$ is bounded. For $(n, m, j) \in \mathbb{N}^3$ we continue, using Proposition 2.2.2. of \cite{RSPDE} for the second equality (note $\beta_k(t) = \langle\bar{V}^2(t),Je_k \rangle_{\bar{U}}$ and recall the independence of $\tilde{X}$ and $\bar{V}^2$):
\begin{align*}
&F(n,m,j) = \int_{\tilde{\Omega}}\mathbb{E}\bigg[F\bigg(\tilde{X},G(\tilde{X})+\sum_{l=1}^{n}\sum_{k=1}^{m}\sum_{i=0}^{N_j-1}\langle J \psi(t_i,\tilde{X})e_k,\bar{u}_l\rangle_{\bar{U}}\big(\beta_k(t_{i+1}\wedge \cdot)-\beta_k(t_i \wedge \cdot)\big) \bar{u}_l\bigg)\bigg{|}\sigma(\tilde{X})\bigg]\text{d}\tilde{\mathbb{P}}(\tilde{\omega}) \\&= \int_{\tilde{\Omega}}\mathbb{E}\bigg[F\bigg(\tilde{X}(\tilde{\omega}),G(\tilde{X}(\tilde{\omega}))+\sum_{l=1}^{n}\sum_{k=1}^{m}\sum_{i=0}^{N_j-1}\langle J \psi(t_i,\tilde{X}(\tilde{\omega}))e_k,\bar{u}_l\rangle_{\bar{U}}\big(\beta_k(t_{i+1}\wedge \cdot)-\beta_k(t_i \wedge \cdot)\big) \bar{u}_l\bigg)\bigg]\text{d}\tilde{\mathbb{P}}(\tilde{\omega}) \\&= \int_{\mathbb{B}}\mathbb{E}\bigg[F\bigg(y,G(y)+\sum_{l=1}^{n}\sum_{k=1}^{m}\sum_{i=0}^{N_j-1}\langle J \psi(t_i,y)e_k,\bar{u}_l\rangle_{\bar{U}}\big(\beta_k(t_{i+1}\wedge \cdot)-\beta_k(t_i \wedge \cdot)\big) \bar{u}_l\bigg)\bigg]\text{d}\tilde{\mathbb{P}}\circ\tilde{X}^{-1}(y).
\end{align*} Finally, we rewrite the last term on the right-hand side as
\begin{align*}
\int_{\mathbb{B}}\int_{\mathbb{W}_0}F\bigg(y,G(y)+\sum_{l=1}^{n}\sum_{k=1}^{m}\sum_{i=0}^{N_j-1}\langle J \psi(t_i,y)e_k,\bar{u}_l\rangle_{\bar{U}}\big(\langle w_{t_{i+1}\wedge \cdot},Je_k \rangle_{\bar{U}}-\langle w_{t_i\wedge \cdot},Je_k \rangle_{\bar{U}}\big) \bar{u}_l\bigg)\text{d}\tilde{\mathbb{P}}_{\bar{V}^2}(w) \text{d}\tilde{\mathbb{P}}_{\tilde{X}}(y).
\end{align*}Since $\bar{V}^2$ is a $\bar{Q}$-Wiener process, each $F(n,m,j)$ only depends on the distribution of $\tilde{X}$ under $\tilde{\mathbb{P}}$, which yields this also for $\int_{\tilde{\Omega}}F(y,w)\text{d}\tilde{\mathbb{P}}\circ (\tilde{X},\bar{\tilde{W}})^{-1}(y,w).$ Since the set of all integrals over such $F$ determines  a measure on $\mathcal{B}(\mathbb{B})\otimes \mathcal{B}(\mathbb{W}_0)$ uniquely, the joint distribution of $\tilde{X}$ and $\bar{\tilde{W}}$ under $\tilde{\mathbb{P}}$ only depends on $\tilde{\mathbb{P}}\circ \tilde{X}^{-1}$.
\renewcommand{\thesection}{B}
\section{The stochastic integral for Hilbert space-valued martingales}
	In this section we briefly recall the construction of the stochastic integral with respect to continuous, square-integrable Hilbert space-valued martingales as integrators and state its most important properties. Most parts of this section are standard and can be found in Section 3.4 of \cite{DaPratoZab} and Section 14 in \cite{MetivierStochInt}.\\
	\\ Let $(\Omega, \mathcal{F}, (\mathcal{F}_t)_{t \in [0,T]}, \mathbb{P})$ be a stochastic basis and $U$ a separable Hilbert space with an orthonormal basis $\{e_n\}_{n \in \mathbb{N}}$. We introduce the Banach space
	$$\mathcal{M}_c^2(T;U) := \{M:\Omega \to C([0,T];U)\big{|}M \text{ continuous }(\mathcal{F}_t)\text{-martingale, } ||M||_{\mathcal{M}^2_T} < \infty\},$$where the norm $||\cdot||_{\mathcal{M}^2_T}$ on $\mathcal{M}_c^2(T;U)$ is defined by 
	$||M||_{\mathcal{M}^2_T} := (\mathbb{E}[||M_T||^2_{U}])^{\frac{1}{2}} = \underset{t \in [0,T]}{\text{sup}}(\mathbb{E}[||M_t||^2_{U}])^{\frac{1}{2}}.$ By the maximal inequality, $||\cdot||_{\mathcal{M}^2_T}$ is equivalent to the $L^2(\Omega, \mathcal{F},\mathbb{P}; L^{\infty}([0,T];U))$-norm on $\mathcal{M}^2_c(T;U)$.
	\subsubsection*{The quadratic variation of a Hilbert space-valued, square-integrable continuous martingale}
	It is well-known that for $M \in \mathcal{M}^2_c(T;U)$ (with $M_0 = 0)$ there exists a unique real-valued, increasing, $(\mathcal{F}_t)$-adapted, continuous process $(\langle M\rangle_t)_{t \in [0,T]}$ (with $\langle M \rangle _0 \equiv 0$) such that $||M_t||^2-\langle M\rangle_t$ is a continuous $(\mathcal{F}_t)$-martingale. Let $\alpha_M := \mathbb{P}(\text{d}\omega)\otimes \langle M \rangle(\omega,\text{d}t)$ as a measure on $\mathcal{B}([0,T])\otimes \mathcal{F}$. Now we define the \textit{quadratic variation} of $M$.
	\begin{dfn}\label{testtest}The $\text{L}_1(U)$-valued process $(\ll M \gg_t)_{t \in [0,T]}$, defined through $$ \ll M \gg_t \,\,= \sum_{i,j=1}^{\infty}\langle \langle M_i, M_j \rangle \rangle_t  e_i \otimes e_j$$ in $\text{L}_1(U)$ is the \textit{quadratic variation (process)} of $M$.
		Here $M_i$ denotes the real-valued martingale $\langle M, e_i \rangle_U,\,\,\,t \in [0,T]$ and we set $e_i\otimes e_j(u) := e_i \langle e_j, u\rangle_U$, where $\langle \langle \cdot , \cdot \rangle \rangle$ denotes quadratic covariation for real-valued martingales.
	\end{dfn}
	\begin{prop}\label{thisthat}
		There exists a (up to an $\alpha_M$-zero set) unique predictable process $Q_M:[0,T ]\times \Omega \to \text{L}^+_1(U)$ such that 
		$$\ll M \gg _t \,\,= \int_{[0,t]}Q_M(s)\text{d}\langle M\rangle_s.$$
		The integral above is a pathwise Bochner-integral, taking values in the separable Banach space $\text{L}_1(U).$
	\end{prop}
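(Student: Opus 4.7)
The idea is to construct $Q_M$ by simultaneously taking Radon--Nikodym derivatives of all the scalar covariations $\langle\langle M_i,M_j\rangle\rangle$ with respect to the scalar increasing process $\langle M\rangle$, and to then check that the resulting matrix defines a trace-class positive operator $\alpha_M$-a.s.

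First I would establish the absolute continuity $\langle\langle M_i,M_j\rangle\rangle \ll \langle M\rangle$ for each $i,j\in\mathbb N$. For the diagonal part this is immediate from the identity $\sum_i\langle M_i\rangle_t = \langle M\rangle_t$, which follows by writing $\|M_t\|_U^2 = \sum_i M_i(t)^2$, applying the real-valued Doob--Meyer decomposition to each $M_i^2$ and using the uniqueness of $\langle M\rangle$. For the off-diagonal part it follows from the Kunita--Watanabe inequality $|\langle\langle M_i,M_j\rangle\rangle_t - \langle\langle M_i,M_j\rangle\rangle_s|\leq \langle M_i\rangle_t^{1/2}\langle M_j\rangle_t^{1/2}\leq \langle M\rangle_t - \langle M\rangle_s$ (applied on every interval). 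Then the predictable Radon--Nikodym theorem (in the form of Doléans' theorem, picking predictable versions of the derivatives) yields predictable processes $q_{ij}:[0,T]\times\Omega\to\mathbb R$, unique up to $\alpha_M$-null sets, such that
\begin{equation*}
\langle\langle M_i,M_j\rangle\rangle_t \;=\; \int_{[0,t]} q_{ij}(s)\,\mathrm d\langle M\rangle_s \qquad \mathbb P\text{-a.s., for all }t\in[0,T].
\end{equation*}

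Next I would define $Q_M(s)$ as the bounded symmetric operator on $U$ whose matrix in the basis $\{e_n\}_{n\in\mathbb N}$ is $(q_{ij}(s))_{i,j}$; equivalently $Q_M(s) = \sum_{i,j}q_{ij}(s)\,e_i\otimes e_j$, provided this series converges in $\text{L}_1(U)$. To see that $Q_M(s)\in\text{L}_1^+(U)$ for $\alpha_M$-a.a.\ $(s,\omega)$, I would use two ingredients. Trace finiteness: summing the diagonal identity above over $i$ and using $\sum_i\langle M_i\rangle_t=\langle M\rangle_t$ yields $\int_{[0,t]}\sum_i q_{ii}(s)\,\mathrm d\langle M\rangle_s=\langle M\rangle_t$, so $\sum_i q_{ii}=1$ and in particular $\mathrm{tr}\,Q_M(s)=1$ $\alpha_M$-a.s. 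Positivity: for every finite linear combination $u=\sum_{i=1}^N a_ie_i$, the process $\langle M,u\rangle_U=\sum_i a_iM_i$ is a real $(\mathcal F_t)$-martingale with $\langle\langle\langle M,u\rangle_U\rangle\rangle_t=\sum_{i,j\leq N}a_ia_j\langle\langle M_i,M_j\rangle\rangle_t$, which is non-decreasing; hence its Radon--Nikodym derivative $\sum_{i,j\leq N}a_ia_jq_{ij}(s)$ is $\geq 0$ $\alpha_M$-a.s. Taking a countable dense $\mathbb Q$-linear subset of $\text{span}\{e_n\}$ removes the $u$-dependence of the exceptional set, and we obtain $\langle Q_M(s)u,u\rangle_U\geq 0$ for all $u\in U$.

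With trace $1$ and non-negativity secured, $Q_M(s)$ is a symmetric non-negative operator of finite trace, hence $Q_M(s)\in\text{L}_1^+(U)$ $\alpha_M$-a.s. The predictability of $Q_M$ as an $\text{L}_1^+(U)$-valued process then reduces to the predictability of each $q_{ij}$ together with the fact that the series $\sum_{i,j}q_{ij}e_i\otimes e_j$ converges in $\text{L}_1(U)$ with the partial sums being predictable $\text{L}_1(U)$-valued processes. To verify the integral representation $\ll M\gg_t=\int_{[0,t]}Q_M(s)\,\mathrm d\langle M\rangle_s$ in $\text{L}_1(U)$, I would test against rank-one operators $e_i\otimes e_j$ using the continuity of the duality pairing $\text{L}_1(U)\times\text{L}(U)\to\mathbb R$; by the defining property of $Q_M$ and Definition~\ref{testtest} both sides have identical coordinates, and the $\text{L}_1$-convergence of the defining series for $\ll M\gg_t$ (which itself has to be justified from $\sum_i\langle M_i\rangle_t=\langle M\rangle_t<\infty$ and Cauchy--Schwarz for covariations) lets us pass to the limit. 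Uniqueness up to $\alpha_M$-null sets follows at once from the uniqueness of predictable Radon--Nikodym derivatives applied to each coordinate $q_{ij}$.

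The main obstacle I anticipate is not any single step but the careful bookkeeping needed to upgrade the pointwise/coordinate-wise statements to an honest $\text{L}_1(U)$-valued predictable process: one must choose the countable family of derivatives $(q_{ij})$ on a single $\alpha_M$-null exceptional set so that positivity, trace-normalisation, and $\text{L}_1$-convergence of the matrix series all hold simultaneously. Once that is done, the Bochner-integral identity is a routine consequence of what has been built.
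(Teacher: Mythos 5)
The paper does not prove this proposition itself but defers to Section 3.4 of Da Prato--Zabczyk and Section 14 of M\'etivier, and your construction of $Q_M$ by taking predictable Radon--Nikodym densities $q_{ij}$ of $\langle\langle M_i,M_j\rangle\rangle$ with respect to $\langle M\rangle=\sum_i\langle M_i\rangle$, then verifying trace-normalisation and positivity on a common null set, is exactly the standard argument given in those references; it is correct. One minor imprecision: the Kunita--Watanabe estimate should read $|\langle\langle M_i,M_j\rangle\rangle_t-\langle\langle M_i,M_j\rangle\rangle_s|\le(\langle M_i\rangle_t-\langle M_i\rangle_s)^{1/2}(\langle M_j\rangle_t-\langle M_j\rangle_s)^{1/2}$ with increments on the right-hand side, which then yields your stated bound $\le\langle M\rangle_t-\langle M\rangle_s$ via the arithmetic--geometric mean inequality.
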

	
	\begin{dfn}
		An $\text{L}(U)$-valued process $(B_t)_{t \in [0,T]}$ with $B_t$ non-negative for every $t$ is called \textit{increasing}, if for every $0\leq s\leq t \leq T$ and $\omega \in \Omega$ the operator $B_t(\omega)-B_s(\omega)$ is non-negative.
	\end{dfn}
	\begin{prop}\label{Prop Qu Var}
		An $\text{L}_1(U)$-valued process $V$ is the quadratic variation of $M \in \mathcal{M}_c^2(T;U)$ with $M_0=0$ if and only if it is increasing, continuous, $(\mathcal{F}_t)$-adapted with $V_0=0$ and such that the process  
		$$\langle M_t,a \rangle_{U} \langle M_t,b \rangle_{U} - \langle V_t a,b \rangle_{U}, \, \, \, t \in [0,T]$$
		is an $\mathbb{R}$-valued $(\mathcal{F}_t)$-martingale for all $a,b \in U.$
	\end{prop}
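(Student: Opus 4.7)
The plan is to prove the equivalence by handling the two implications separately, relying heavily on the scalar case. For the implication ``$V = \ll M \gg$ satisfies the listed properties,'' I will unpack Definition \ref{testtest} and translate everything through the matrix coefficients $\langle V_t e_i, e_j\rangle_U = \langle\langle M_i, M_j\rangle\rangle_t$. For the uniqueness implication, I will reduce, for each fixed $a, b \in U$, to the scalar characterization of $\langle\langle M^a, M^b\rangle\rangle$, where $M^a := \langle M, \cdot\rangle_U$.

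For the necessity direction, expanding $a$ and $b$ in the orthonormal basis $(e_i)$ and invoking bilinearity of the real-valued covariation gives
$$\langle \ll M \gg_t a, b\rangle_U = \sum_{i,j} \langle a, e_j\rangle_U \langle b, e_i\rangle_U \langle\langle M_i, M_j\rangle\rangle_t = \langle\langle M^a, M^b\rangle\rangle_t,$$
where absolute convergence of the series follows from the $\text{L}_1(U)$-convergence of the series defining $\ll M \gg_t$. The martingale property claimed in the proposition then is nothing but the defining property of the scalar covariation of $M^a$ and $M^b$. Continuity, adaptedness and the initial condition $V_0 = 0$ transfer immediately from the scalar case via the $\text{L}_1(U)$-norm convergence, and choosing $a = b$ in the identity above shows that $\langle \ll M\gg_t a, a\rangle_U = \langle\langle M^a\rangle\rangle_t$ is continuous and non-decreasing in $t$, which yields the ``increasing'' property in the operator sense.

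For uniqueness, let $V$ satisfy the hypotheses. The ``increasing'' condition forces each $V_t$ to be positive semidefinite (hence symmetric) and $t \mapsto \langle V_t a, a\rangle_U$ to be continuous and non-decreasing, so the polarization identity
$$\langle V_t a, b\rangle_U = \tfrac{1}{4}\bigl(\langle V_t(a+b), a+b\rangle_U - \langle V_t(a-b), a-b\rangle_U\bigr)$$
exhibits $t \mapsto \langle V_t a, b\rangle_U$ as a difference of two continuous adapted non-decreasing processes starting at zero, hence as a continuous adapted process of finite variation starting at zero. The assumed martingale property then makes $\langle V_t a, b\rangle_U$ the Doob--Meyer finite-variation compensator of $M^a M^b$, which by uniqueness in the scalar case coincides with $\langle\langle M^a, M^b\rangle\rangle_t$, and by the formula of the previous paragraph with $\langle \ll M \gg_t a, b\rangle_U$.

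Picking a countable dense subset $D \subseteq U$, a single $\mathbb{P}$-null set then removes all the pair-dependent exception sets simultaneously for $a, b \in D$; continuity in $(a,b)$ together with density promotes the identification to $V_t = \ll M \gg_t$ in $\text{L}(U)$ for every $t \in [0,T]$, $\mathbb{P}$-almost surely. The point at which the ``increasing'' hypothesis is indispensable is precisely the finite-variation conclusion via polarization, without which the scalar uniqueness theorem would not apply; this is the main (mild) technical hurdle of the argument, the rest being a routine expansion in the orthonormal basis.
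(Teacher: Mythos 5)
The paper itself gives no proof of this proposition: it is quoted as standard, with the appendix pointing to Section~3.4 of Da Prato--Zabczyk and Section~14 of M\'etivier. Your argument reproduces the classical proof in outline --- necessity by expanding $a,b$ in the orthonormal basis and reducing to the scalar covariation $\langle\langle M^a,M^b\rangle\rangle$, sufficiency by polarization plus uniqueness of the continuous finite-variation compensator, then a countable-dense-set argument --- and that outline is the right one.

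There is, however, one step that fails as written. The Hilbert spaces in this paper are \emph{real}, and over a real Hilbert space $\langle V_t u,u\rangle_U\ge 0$ for all $u$ does \emph{not} imply that $V_t$ is symmetric; your parenthetical ``(hence symmetric)'' is where the argument silently acquires a hypothesis it has not earned. Without symmetry your polarization identity returns only the symmetric part $\tfrac12\bigl(\langle V_ta,b\rangle_U+\langle V_tb,a\rangle_U\bigr)$, so the finite-variation conclusion --- and with it the appeal to scalar uniqueness --- controls only that part, and the antisymmetric part of $V$ is left entirely free. This is not a removable defect of the write-up: if $W$ is a $Q$-Wiener process, $K\ne 0$ an antisymmetric nuclear operator and $\beta$ an adapted real Brownian motion, then $V_t:=tQ+\beta_tK$ is $\text{L}_1(U)$-valued, continuous, adapted, starts at $0$, satisfies $\langle (V_t-V_s)u,u\rangle_U=(t-s)\langle Qu,u\rangle_U\ge 0$, and $\langle W_t,a\rangle_U\langle W_t,b\rangle_U-\langle V_ta,b\rangle_U$ is still a martingale for every $a,b$, yet $V\ne\,\ll W\gg$. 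Swapping $a$ and $b$ in the martingale property only shows that $\langle (V_t-V_t^*)a,b\rangle_U$ is a martingale, which is not enough to kill it. So symmetry of $V_t$ must either be read into ``non-negative'' (as the cited reference does, and as the paper's separate listing of ``symmetric and non-negative'' in defining $\text{L}_1^+(U)$ suggests is not its default convention) or added as a hypothesis; it cannot be derived. A lesser remark in the same spirit: continuity of $t\mapsto\,\ll M\gg_t$ in $\text{L}_1(U)$ in the necessity direction is not ``immediate'' from the scalar case --- it needs the Kunita--Watanabe bound $|\langle\langle M_i,M_j\rangle\rangle_t|\le\langle\langle M_i\rangle\rangle_t^{1/2}\langle\langle M_j\rangle\rangle_t^{1/2}$ together with a Dini argument on $\sum_i\langle\langle M_i\rangle\rangle_t=\mathrm{tr}\ll M\gg_t$ --- but that is routine. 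The rest of your proof is correct.
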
Next we define the notion of the \textit{quadratic cross variation} for two Hilbert space-valued martingales and draw a connection to the quadratic variation reminiscent to the real-valued case.
	\begin{dfn}\label{Def qu cross var}
		Let $M,N \in \mathcal{M}^2_c(T;U)$. The \textit{quadratic cross variation} of $M$ and $N$ is defined through
		$\ll M,N \gg_t \,\,:= \sum_{i,j=1}^{\infty}\langle \langle M_i,N_j \rangle \rangle_t e_i \otimes e_j, \,\,\,t \in [0,T].$
	\end{dfn}
	\begin{lem}
		For $M,N \in \mathcal{M}^2_c(T;U)$ the following formula holds $\mathbb{P}$-a.s. for every $t \in [0,T]$:
		\begin{equation*}
		\ll M+N \gg_t \,=\, \ll M \gg_t + \ll N \gg_t + \ll M,N \gg_t + \ll N,M \gg_t.
		\end{equation*}
	\end{lem}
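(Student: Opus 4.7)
The strategy is to reduce the identity to the corresponding well-known bilinear identity for real-valued quadratic covariation, applied componentwise in the fixed orthonormal basis $\{e_n\}_{n \in \mathbb{N}}$ of $U$, and then reassemble through the series definition of $\ll \cdot \gg$.

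First I would expand the components. By Definition \ref{Def qu cross var}, for $L \in \mathcal{M}_c^2(T;U)$ we write $L_i := \langle L, e_i\rangle_U$, which is a continuous real-valued square-integrable martingale. In particular $(M+N)_i = M_i + N_i$. The classical real-valued covariation is bilinear and symmetric under the swap with the obvious quadratic identity, so for each pair $(i,j) \in \mathbb{N}^2$ and every $t \in [0,T]$ we obtain $\mathbb{P}$-a.s.
\begin{equation*}
\langle\langle M_i + N_i, M_j + N_j \rangle\rangle_t = \langle\langle M_i, M_j \rangle\rangle_t + \langle\langle N_i, N_j \rangle\rangle_t + \langle\langle M_i, N_j \rangle\rangle_t + \langle\langle N_i, M_j \rangle\rangle_t,
\end{equation*}
where the exception set can be chosen independently of $t$ by continuity and can be chosen independently of $(i,j)$ by taking a countable union (using that $\mathbb{N}^2$ is countable).

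Next I would plug this into the series defining $\ll M+N \gg_t$. By Definition \ref{testtest}, $\ll M+N \gg_t = \sum_{i,j=1}^{\infty}\langle\langle (M+N)_i,(M+N)_j\rangle\rangle_t\, e_i\otimes e_j$, where the series converges in $\text{L}_1(U)$. Substituting the componentwise identity and splitting yields
\begin{equation*}
\ll M+N \gg_t = \sum_{i,j=1}^{\infty}\langle\langle M_i,M_j\rangle\rangle_t e_i\otimes e_j + \sum_{i,j=1}^{\infty}\langle\langle N_i,N_j\rangle\rangle_t e_i\otimes e_j + \sum_{i,j=1}^{\infty}\langle\langle M_i,N_j\rangle\rangle_t e_i\otimes e_j + \sum_{i,j=1}^{\infty}\langle\langle N_i,M_j\rangle\rangle_t e_i\otimes e_j,
\end{equation*}
which by Definitions \ref{testtest} and \ref{Def qu cross var} is exactly $\ll M\gg_t + \ll N \gg_t + \ll M,N\gg_t + \ll N,M\gg_t$.

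The only non-cosmetic point to verify is that the four series on the right genuinely converge in $\text{L}_1(U)$, so that the split is legitimate rather than a formal manipulation. For the pure variations $\ll M\gg_t$ and $\ll N\gg_t$ this is guaranteed by Definition \ref{testtest}. For the cross terms, I would invoke Definition \ref{Def qu cross var}, which directly asserts convergence of the double series in $\text{L}_1(U)$; alternatively one can note that $\ll M+N\gg_t$ and $\ll M\gg_t$, $\ll N\gg_t$ converge in $\text{L}_1(U)$, so the remaining sum of the two cross terms must converge as well, and the symmetric decomposition into $\ll M,N\gg_t + \ll N,M\gg_t$ is ensured by Definition \ref{Def qu cross var}. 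I expect this convergence bookkeeping to be the only mild obstacle; everything else is a direct transcription of the scalar identity for real-valued covariation applied entry by entry.
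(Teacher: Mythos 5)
Your proof is correct and follows the same route as the paper, whose entire proof is the one-line remark that the claim follows from Definitions \ref{testtest} and \ref{Def qu cross var} and the bilinearity of the real-valued cross variation. You have simply spelled out the componentwise expansion and the series bookkeeping that the paper leaves implicit.
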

	\begin{proof} The claim follows immediately by Definitions \ref{testtest}, \ref{Def qu cross var} and the bilinearity of the cross variation.
	\end{proof}
	\begin{kor}\label{Quad Var additive for independent}
		\begin{enumerate}
			\item[(i)] Let $M, N \in \mathcal{M}^2_c(T;U)$ be such that the real-valued continuous martingales $\langle M, e_i \rangle_U$ and $\langle N,e_j\rangle_U $ have covariation zero for every $i,j \in \mathbb{N}.$ Then we have for every $t \in [0,T]$
			\begin{equation}\label{onenine}
			\ll M+N \gg _t \,=\, \ll M \gg _t + \ll N \gg _t \, \, \,\mathbb{P}\text{-a.s.}
			\end{equation}
			\item [(ii)] 	In particular (\ref{onenine}) holds, if $M$ and $N$ are independent.
		\end{enumerate}
	\end{kor}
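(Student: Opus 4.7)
The plan is to derive both parts as essentially immediate consequences of the preceding lemma together with the definition of the quadratic cross variation.

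For part (i), I would start from the identity
\begin{equation*}
\ll M+N \gg_t \,=\, \ll M \gg_t + \ll N \gg_t + \ll M,N \gg_t + \ll N,M \gg_t
\end{equation*}
provided by the lemma just above the corollary. By Definition \ref{Def qu cross var}, the mixed terms are given by
\begin{equation*}
\ll M,N \gg_t \,=\, \sum_{i,j=1}^{\infty}\langle\langle M_i,N_j\rangle\rangle_t\, e_i\otimes e_j
\end{equation*}
and analogously for $\ll N,M \gg_t$. Under the standing hypothesis of (i), every real-valued covariation $\langle\langle M_i,N_j\rangle\rangle_t$ vanishes identically (up to a $\mathbb{P}$-null set independent of $t$, by continuity). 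Hence each of the two cross-variation terms is the zero element of $\mathrm{L}_1(U)$ for all $t\in[0,T]$ almost surely, and (i) follows.

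For part (ii), the task reduces to verifying the assumption of (i), i.e. that $\langle\langle M_i,N_j\rangle\rangle\equiv 0$ for all $i,j\in\mathbb{N}$ whenever $M$ and $N$ are independent. Fix $i,j$; since $M_i$ and $N_j$ are continuous $(\mathcal{F}_t)$-martingales on $[0,T]$ in $\mathcal{M}_c^2$, it is enough to check that their product $M_i\cdot N_j$ is itself a martingale (for then the unique continuous BV process making $M_iN_j-A$ a martingale with $A_0=0$ must be $A\equiv 0$). Independence of the processes $M$ and $N$ transfers to independence of the real-valued processes $M_i$ and $N_j$, and in particular for $0\le s<t\le T$ the increment $M_i(t)-M_i(s)$ is independent of $\mathcal{F}_s$ and of $N_j$ (in its entirety), so conditioning on $\mathcal{F}_s$ and using the martingale property of $M_i$ and $N_j$ yields $\mathbb{E}[M_i(t)N_j(t)\mid\mathcal{F}_s]=M_i(s)N_j(s)$ in a routine calculation.

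The main (and only) subtle point is making the independence argument above rigorous: one must be a little careful about which $\sigma$-algebras the increments are independent of, since a priori independence of the two processes only gives independence of $\sigma(M)$ and $\sigma(N)$, not automatically independence of $M_i(t)-M_i(s)$ from $\mathcal{F}_s\vee\sigma(N_j)$. However, since $M$ and $N$ are independent, one can work on the product filtration and recover exactly what is needed; this is a classical fact (independent continuous $L^2$-martingales have vanishing quadratic covariation), so I would simply invoke it. Once this is in place, (ii) reduces to (i) and the corollary is proved.
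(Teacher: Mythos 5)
Your proposal is correct and follows exactly the route the paper intends: the corollary is stated without proof as an immediate consequence of the preceding lemma, with the cross-variation terms $\ll M,N\gg$ and $\ll N,M\gg$ vanishing by Definition \ref{Def qu cross var} under the hypothesis of (i), and (ii) reducing to (i) via the classical fact that independent continuous square-integrable martingales have vanishing covariation. Your explicit attention to the $\sigma$-algebra subtlety in (ii) is a welcome addition, but the argument is the same one the paper leaves implicit.
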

	Finally, we state the Hilbert space-version of Lévy's characterization of Brownian motion, which is used in the proof of Theorem \ref{UL impl JUL}.
	\begin{prop}\label{Generalized Levy}(Generalized Lévy-Characterization)\\
		Let $M \in \mathcal{M}^2_c(T;U)$ be such that $M_0 = 0 \,\,\mathbb{P}$-a.s and let $Q \in \text{L}^+_1(U)$. Then the following are equivalent.
		\begin{enumerate}
			\item [(i)] $(M_t)_{t \in [0,T]}$ is an $(\mathcal{F}_t)$-$Q$-Wiener process on $(\Omega, \mathcal{F}, (\mathcal{F}_t)_{t \in [0,T]}, \mathbb{P})$ (in particular $M_t-M_s$ is independent of $\mathcal{F}_s$ for all $0 \leq s < t \leq T$).
			\item[(ii)] $\ll M \gg_t \,= tQ$ $\mathbb{P}$-a.s. for $t \in [0,T]$.
		\end{enumerate}
	\end{prop}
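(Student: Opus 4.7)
The direction (i)$\Rightarrow$(ii) is the easier one. If $M$ is an $(\mathcal{F}_t)$-$Q$-Wiener process, then for any $a,b \in U$ and $0 \le s \le t \le T$, the increment $M_t - M_s$ is independent of $\mathcal{F}_s$ and centered Gaussian with covariance $(t-s)Q$, so
$$\mathbb{E}\bigl[\langle M_t,a\rangle_U\langle M_t,b\rangle_U - \langle tQa,b\rangle_U \,\big|\, \mathcal{F}_s\bigr] = \langle M_s,a\rangle_U\langle M_s,b\rangle_U - \langle sQa,b\rangle_U$$
follows from a direct expansion of $M_t = M_s + (M_t - M_s)$ and the identity $\mathbb{E}[\langle M_t-M_s,a\rangle_U\langle M_t-M_s,b\rangle_U] = (t-s)\langle Qa,b\rangle_U$. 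Hence by the characterization in Proposition \ref{Prop Qu Var} (applied with $V_t = tQ$), $\ll M\gg_t = tQ$ $\mathbb{P}$-a.s.

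The harder direction (ii)$\Rightarrow$(i) is the genuine Lévy step. My plan is to apply Itô's formula in Hilbert space to the complex function $x \mapsto \exp(i\langle x,a\rangle_U)$ for fixed $a \in U$. Writing $N^a_t := \langle M_t, a\rangle_U$, the real-valued martingale $N^a$ has quadratic variation $\langle N^a\rangle_t = \langle \ll M\gg_t a, a\rangle_U = t\langle Qa,a\rangle_U$ by hypothesis (ii) together with the relation between $\ll M\gg$ and the scalar covariations from Definition \ref{testtest}. Itô's formula applied to $f(x)=e^{ix}$ and $N^a$ yields, for $0 \le s \le t$,
$$e^{iN^a_t} - e^{iN^a_s} = i\int_s^t e^{iN^a_r}\,\mathrm{d}N^a_r - \tfrac{1}{2}\langle Qa,a\rangle_U \int_s^t e^{iN^a_r}\,\mathrm{d}r.$$
Multiplying by $e^{-iN^a_s}\mathds{1}_A$ for $A \in \mathcal{F}_s$, taking expectation (the stochastic integral is a true martingale because its integrand is bounded) and setting $g_A(t) := \mathbb{E}[e^{i(N^a_t - N^a_s)}\mathds{1}_A]$, we obtain the ODE $g_A'(t) = -\tfrac{1}{2}\langle Qa,a\rangle_U g_A(t)$ with $g_A(s) = \mathbb{P}(A)$, whose unique solution is $g_A(t) = \mathbb{P}(A)\exp\!\bigl(-\tfrac{1}{2}(t-s)\langle Qa,a\rangle_U\bigr)$. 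Setting $A=\Omega$ identifies the characteristic function of $\langle M_t - M_s,a\rangle_U$ as that of a centered Gaussian with variance $(t-s)\langle Qa,a\rangle_U$, and the factorization for general $A$ yields independence of $\langle M_t - M_s,a\rangle_U$ from $\mathcal{F}_s$.

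To upgrade from one-dimensional projections to the full $Q$-Wiener property, I would repeat the argument with $a$ replaced by an arbitrary finite linear combination $\sum_{k=1}^n \alpha_k a_k$: the same Itô computation gives
$$\mathbb{E}\bigl[\exp\!\bigl(i\langle M_t - M_s, \textstyle\sum_k \alpha_k a_k\rangle_U\bigr)\mathds{1}_A\bigr] = \mathbb{P}(A)\exp\!\bigl(-\tfrac{1}{2}(t-s)\langle Q\textstyle\sum_k\alpha_k a_k,\sum_k\alpha_k a_k\rangle_U\bigr),$$
so $(\langle M_t-M_s,a_1\rangle_U,\dots,\langle M_t-M_s,a_n\rangle_U)$ is Gaussian with covariance $(t-s)(\langle Qa_i,a_j\rangle_U)_{i,j}$ and independent of $\mathcal{F}_s$. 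Choosing $a_1,\dots,a_n$ among an orthonormal basis of $U$ and letting $n \to \infty$, together with the continuity of $M$ and $M_0 = 0$, gives exactly the defining properties of an $(\mathcal{F}_t)$-$Q$-Wiener process. The main technical obstacle is the initial application of Itô's formula to a complex-valued functional of a general Hilbert-space-valued continuous square-integrable martingale whose quadratic variation is prescribed by (ii); once that is set up properly (splitting into real and imaginary parts, and using the scalar Itô formula for $N^a$), the rest reduces to a characteristic-function bookkeeping.
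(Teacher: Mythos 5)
Your argument is correct and is exactly the standard proof of the infinite-dimensional L\'evy characterization: (i)$\Rightarrow$(ii) by expanding the increment and invoking Proposition \ref{Prop Qu Var}, and (ii)$\Rightarrow$(i) by applying the scalar It\^o formula to $e^{iN^a}$ for $N^a=\langle M,a\rangle_U$, solving the resulting ODE for $g_A$, and reading off Gaussianity and independence from the factorized characteristic functional (note that since $a\in U$ is arbitrary, your separate step with finite linear combinations is already subsumed). The paper itself states this proposition without proof, as part of the Appendix B review of standard material from Da Prato--Zabczyk and M\'etivier, and your proposal reproduces the proof given in those references.
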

	\subsubsection*{The construction of the stochastic integral}\label{The Construction of the Stochastic Integral}We continue with the construction of the stochastic integral.
	\begin{rem}(c.f. \cite[Prop. 2.3.4.]{RSPDE}) If $Q: [0,T]\times \Omega \to \text{L}^+_1(U)$, then there exists a unique, operator-valued process $Q^{\frac{1}{2}}:  [0,T]\times \Omega \to \text{L}_2(U)$ such that $Q^{\frac{1}{2}}(t,\omega) \circ Q^{\frac{1}{2}}(t,\omega) = Q(t,\omega)$ for all $(t,\omega) \in [0,T]\times \Omega$.  
	\end{rem}The following construction and results are standard. One starts with the construction of stochastic integral with respect to \textit{elementary integrands} and then extends this definition through a suitable isometry. In the sequel $H$ denotes another separable, (infinite-dimensional) Hilbert space.
	\begin{dfn}\label{Def simple proc}
		A process $A: [0,T]\times \Omega \to \text{L}(U,H)$ is \textit{elementary}, if it is of the form
		$$A(t,\omega) = \sum_{k=0}^{N-1}\phi_k(\omega)\mathds{1}_{]t_k,t_{k+1}]}(t),$$where $\phi_k: \Omega \to \text{L}(U,H)$ has finite image in $\text{L}(U,H)$ and is $\mathcal{F}_{t_k}$-measurable with respect to the strong Borel $\sigma$-algebra for every $k \in \{0,...,N-1\}$ and $ 0 = t_0 < t_1 < ... < t_N = T$ is a finite partition of $[0,T]$. The set of all such processes is denoted by $\mathcal{E}_T(U,H)$.
	\end{dfn}
	\begin{dfn}
		For $A = \sum_{k=0}^{N-1}\phi_k\mathds{1}_{]t_k,t_{k+1}]} \in \mathcal{E}_T(U,H)$ and $M \in \mathcal{M}^2_c(T;U)$ the \textit{stochastic integral of }$A$ \textit{with respect to }$M$ is defined through
		$$\int_{0}^{t}A(s)\text{d}M_s := \sum_{k=0}^{N-1}\phi_k\big(M_{t_{k+1}\wedge t}-M_{t_k\wedge t}\big), \,\,\,t\in [0,T].$$
	\end{dfn}
	\begin{prop}
		Let $A \in \mathcal{E}_T(U,H)$. Then the stochastic integral process $\big(\int_{0}^{t}A(s)\text{d}M_s\big)_{t \in [0,T]}$ is an element of $\mathcal{M}^2_c(T;H).$
	\end{prop}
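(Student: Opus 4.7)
Write $A = \sum_{k=0}^{N-1}\phi_k\mathds{1}_{]t_k,t_{k+1}]}$ as in Definition \ref{Def simple proc} and set $I_t := \int_0^t A(s)\,\mathrm{d}M_s = \sum_{k=0}^{N-1}\phi_k\bigl(M_{t_{k+1}\wedge t}-M_{t_k\wedge t}\bigr)$. The goal is to show that $(I_t)_{t\in [0,T]}$ is $H$-valued, continuous, $(\mathcal{F}_t)$-adapted, $L^2$-integrable, and a martingale. The approach is to check each of these properties summand by summand, using only the assumptions that each $\phi_k$ is $\mathcal{F}_{t_k}$-measurable with finite image (hence bounded in $\mathrm{L}(U,H)$) and that $M\in \mathcal{M}^2_c(T;U)$.

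First I would handle continuity and adaptedness. Continuity in $t$ of each map $t\mapsto M_{t_{k+1}\wedge t}-M_{t_k\wedge t}$ is inherited directly from the continuity of $M$, and since each $\phi_k:\Omega\to \mathrm{L}(U,H)$ is applied pointwise in $\omega$, the continuity of $I$ in $H$ follows. For adaptedness I would treat the $k$-th summand separately: if $t\leq t_k$ the summand vanishes identically, while for $t>t_k$ the operator $\phi_k$ is $\mathcal{F}_{t_k}\subseteq \mathcal{F}_t$-measurable (in the strong Borel sense) and $M_{t_{k+1}\wedge t}-M_{t_k\wedge t}$ is $\mathcal{F}_t$-measurable by $(\mathcal{F}_t)$-adaptedness of $M$; together with the fact that $\phi_k$ has finite image, this yields $\mathcal{F}_t$-measurability of the composition. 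Square-integrability follows from the bound $\|\phi_k(v)\|_H\leq C_k\|v\|_U$ with $C_k:=\max_{\omega}\|\phi_k(\omega)\|_{\mathrm{L}(U,H)}<\infty$ (since the image of $\phi_k$ is finite), so that $\mathbb{E}[\|I_t\|_H^2]\lesssim \sum_{k=0}^{N-1}C_k^2\,\mathbb{E}[\|M_{t_{k+1}\wedge t}-M_{t_k\wedge t}\|_U^2]<\infty$ using $M\in \mathcal{M}^2_c(T;U)$.

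The main step is the martingale property. Fix $0\leq s\leq t\leq T$ and compute $\mathbb{E}[I_t\mid\mathcal{F}_s]$ summand by summand. Three cases arise for each $k$. If $t_{k+1}\leq s$, then $t_k\wedge t=t_k$, $t_{k+1}\wedge t=t_{k+1}$, and $t_k\wedge s=t_k$, $t_{k+1}\wedge s=t_{k+1}$, so both $I_t$- and $I_s$-contributions equal $\phi_k(M_{t_{k+1}}-M_{t_k})$, which is $\mathcal{F}_s$-measurable. If $t_k\leq s<t_{k+1}$, then $\phi_k$ is $\mathcal{F}_{t_k}\subseteq \mathcal{F}_s$-measurable and the boundedness of $\phi_k$ allows me to pull it out of the conditional expectation, giving $\phi_k(\mathbb{E}[M_{t_{k+1}\wedge t}\mid\mathcal{F}_s]-M_{t_k})=\phi_k(M_s-M_{t_k})$ by the martingale property of $M$ and optional stopping at the bounded stopping time $t_{k+1}\wedge t$; this matches the $I_s$-contribution $\phi_k(M_s-M_{t_k})$. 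If $s\leq t_k$, the $I_s$-contribution vanishes, and I would verify that the $I_t$-contribution has zero conditional expectation by the tower property: condition first on $\mathcal{F}_{t_k}$ (pulling out $\phi_k$ by boundedness), then use $\mathbb{E}[M_{t_{k+1}\wedge t}-M_{t_k\wedge t}\mid\mathcal{F}_{t_k}]=0$.

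The only subtlety I anticipate is the interchange of the bounded operator-valued random variable $\phi_k$ with conditional expectation of an $H$-valued integrand; this is where the finite-image assumption on $\phi_k$ in Definition \ref{Def simple proc} is essential, since it reduces the argument to finitely many $\mathcal{F}_{t_k}$-measurable indicator sets and deterministic bounded operators in $\mathrm{L}(U,H)$, for which pulling out of the Bochner conditional expectation is immediate. Summing the three cases over $k$ yields $\mathbb{E}[I_t\mid\mathcal{F}_s]=I_s$, completing the proof.
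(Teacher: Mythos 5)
Your argument is correct and complete: the paper itself states this proposition without proof, deferring to the standard references (Da Prato--Zabczyk, M\'etivier), and your summand-by-summand verification of continuity, adaptedness, square-integrability and the three-case martingale computation is exactly the standard argument those references give. The one point you flag as a subtlety --- pulling the operator-valued $\phi_k$ through the Bochner conditional expectation --- is indeed handled correctly by the finite-image reduction to deterministic operators on $\mathcal{F}_{t_k}$-measurable sets.
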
Now we want to extend this definition through a suitable isometry. We need the following space of operator-valued processes. In the sequel we abbreviate the Hilbert-Schmidt norm by $||\cdot||_2$ when no confusion is possible.
	\begin{dfn}\label{Lambda space}
		Let $M \in \mathcal{M}^2_c(T;U)$ and $Q_M$ as in Proposition \ref{thisthat}. The vector space $\Lambda^2_T(M,U,H,\mathcal{P}_T)$ is defined by containing processes
		$
		X: [0,T]\times \Omega \to \text{Lin}(U,H) ,$ which fulfill
		\begin{enumerate}
			\item [(i)] $\mathcal{D}(X(t,\omega)$) $\supseteq Q_M^{\frac{1}{2}}(t,\omega)(U)$ $\text{ for all } (t,\omega) \in [0,T]\times \Omega$.
			\item[(ii)] For every $u \in U$ the process $X \circ Q_M^{\frac{1}{2}}(u): [0,T]\times \Omega \to H$ is $(\mathcal{F}_t)$-predictable.
			\item[(iii)] $\int_{[0,T]\times \Omega}||X \circ Q_M^{\frac{1}{2}}||^2_2\text{d}\alpha_M < +\infty.$
		\end{enumerate}
	\end{dfn}
	\begin{prop}The bilinear form
		$(X,Y) \mapsto \int_{[0,T]\times \Omega}\text{tr}\big[(X \circ Q_M^{\frac{1}{2}}) (Y \circ Q_M^{\frac{1}{2}})^*\big]\text{d}\alpha_M$ is a scalar product on $\Lambda^2_T(M,U,H,\mathcal{P}_T)$. Equipped with this scalar product, $\Lambda^2_T(M,U,H,\mathcal{P}_T)$ is a Hilbert space. In particular, denoting the corresponding norm by $||\cdot ||_{\Lambda^2_T}$, we have $||X||_{\Lambda^2_T} = \int_{[0,T]\times \Omega}||X \circ Q_M^{\frac{1}{2}}||^2_{2}\text{d}\alpha_M$ for $X \in \Lambda^2_T(M,U,H,\mathcal{P}_T)$.
	\end{prop}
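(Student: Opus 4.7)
The plan is to realize $\Lambda^2_T(M,U,H,\mathcal{P}_T)$ as a closed subspace of the Bochner space
\[
\mathcal{L} := L^2\bigl([0,T]\times\Omega,\,\mathcal{P}_T,\,\alpha_M;\,\mathrm{L}_2(U,H)\bigr)
\]
via the linear map $\Phi: X \mapsto X \circ Q_M^{1/2}$. Since $\mathrm{L}_2(U,H)$ is a separable Hilbert space, $\mathcal{L}$ itself is a separable Hilbert space under the inner product $(f,g) \mapsto \int \mathrm{tr}(f g^*)\,d\alpha_M = \int \langle f,g\rangle_{\mathrm{L}_2(U,H)}\,d\alpha_M$. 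Both the scalar-product structure and completeness will therefore transfer from $\mathcal{L}$ to $\Lambda^2_T$ once $\Phi$ is shown to be a well-defined isometric embedding with closed image.

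First I would verify that the bilinear form is well-defined and equals the pullback of the inner product on $\mathcal{L}$ under $\Phi$. Condition (i) of Definition \ref{Lambda space} ensures that $X \circ Q_M^{1/2}(t,\omega)$ is pointwise defined; since $Q_M^{1/2}$ takes values in $\mathrm{L}_2(U)$ and $X(t,\omega)$ is linear on the range of $Q_M^{1/2}(t,\omega)$, the composition lies in $\mathrm{L}_2(U,H)$. Condition (ii) combined with the separability of $U$ (test against a countable orthonormal basis) yields $\mathcal{P}_T$-measurability of $(t,\omega) \mapsto X \circ Q_M^{1/2}(t,\omega)$ as an $\mathrm{L}_2(U,H)$-valued process, and condition (iii) places this process in $\mathcal{L}$. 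Bilinearity, symmetry and positive semi-definiteness of the form are inherited from the trace inner product on $\mathrm{L}_2(U,H)$; Cauchy--Schwarz bounds the integrand in $L^1(\alpha_M)$. Positive definiteness holds after the natural identification $X \sim Y \iff X \circ Q_M^{1/2} = Y \circ Q_M^{1/2}$ $\alpha_M$-a.e., which is the only equivalence that $\Lambda^2_T$ can detect, since conditions (ii) and (iii) depend on $X$ only through the composition. Thus $\Phi$ is a well-defined linear isometry.

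For completeness it then suffices to show that $\Phi(\Lambda^2_T)$ is closed in $\mathcal{L}$. Given a Cauchy sequence $(X_n) \subset \Lambda^2_T$, the image sequence $(\Phi(X_n))$ converges in the complete space $\mathcal{L}$ to some $Z$, and along a subsequence $X_{n_k} \circ Q_M^{1/2} \to Z$ in $\mathrm{L}_2(U,H)$ for $\alpha_M$-a.a.\ $(t,\omega)$. I would construct a preimage $X$ by setting $X(t,\omega)\bigl(Q_M^{1/2}(t,\omega) u\bigr) := Z(t,\omega) u$ and extending linearly on the range of $Q_M^{1/2}(t,\omega)$, with $X \equiv 0$ on the exceptional null set. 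Well-definedness off the null set is forced by the pointwise convergence: if $Q_M^{1/2}(t,\omega) u = Q_M^{1/2}(t,\omega) v$, then $Z(t,\omega)u$ and $Z(t,\omega)v$ are both limits of $X_{n_k}(t,\omega) Q_M^{1/2}(t,\omega) u = X_{n_k}(t,\omega) Q_M^{1/2}(t,\omega) v$, hence coincide.

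The main obstacle is the verification of predictability condition (ii) of Definition \ref{Lambda space} for the constructed $X$. For each $u \in U$ the required process $(t,\omega) \mapsto X \circ Q_M^{1/2}(t,\omega) u$ equals $Z(t,\omega) u$ by construction. Since $Z$ admits a $\mathcal{P}_T$-measurable representative as an $\mathrm{L}_2(U,H)$-valued process and the evaluation map $A \mapsto A u$ is continuous from $\mathrm{L}_2(U,H)$ into $H$, this composition is $\mathcal{P}_T$-measurable. Redefinition on an $\alpha_M$-null set does not affect the equivalence class of $X$ in $\Lambda^2_T$, so $X \in \Lambda^2_T$ with $\Phi(X) = Z$; hence $\Phi(\Lambda^2_T)$ is closed in $\mathcal{L}$ and $\Lambda^2_T$ is a Hilbert space. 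The explicit norm identity is then immediate from the defining inner product by setting $X = Y$.
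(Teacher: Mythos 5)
The paper does not actually prove this proposition: it appears in Appendix B, which is an explicit review of standard material, and the result is taken from the cited references (Section 4.2 of Da Prato--Zabczyk and M\'etivier). Your argument is precisely the standard proof from those sources --- realizing $\Lambda^2_T(M,U,H,\mathcal{P}_T)$ as an isometric copy of a closed subspace of the Bochner space $L^2([0,T]\times\Omega,\mathcal{P}_T,\alpha_M;\mathrm{L}_2(U,H))$ via $X\mapsto X\circ Q_M^{1/2}$ --- and it is correct, including the two points the paper leaves implicit: that positive definiteness only holds modulo the identification $X\sim Y$ iff $X\circ Q_M^{1/2}=Y\circ Q_M^{1/2}$ $\alpha_M$-a.e.\ (which is indeed the intended reading of the space), and that the preimage of a limit must be reconstructed on the range of $Q_M^{1/2}$ via the pointwise a.e.\ convergence. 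Two small remarks: condition (iii) only forces $X\circ Q_M^{1/2}(t,\omega)\in\mathrm{L}_2(U,H)$ for $\alpha_M$-a.e.\ $(t,\omega)$, not pointwise as you briefly assert; and when you set $X\equiv 0$ on the exceptional set you should note that this set can be chosen in $\mathcal{P}_T$ (it is the divergence set of a sequence of predictable processes), so that condition (ii) survives the modification --- both are cosmetic and do not affect the argument. The closing norm identity as you derive it reads $\|X\|_{\Lambda^2_T}^2=\int\|X\circ Q_M^{1/2}\|_2^2\,\mathrm{d}\alpha_M$, which silently corrects the missing square in the paper's statement.
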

	For every element of $\Lambda^2_T(M,U,H,\mathcal{P}_T)$ the stochastic integral with respect to $M$ can be defined. This is contained in the following two statements.
	\begin{prop}\label{important}
		$\Lambda^2_T(M,U,H,\mathcal{P}_T)$ is the closure of $\mathcal{E}_T(U,H)$ with respect to the norm $||\cdot||_{\Lambda^2_T}$.
	\end{prop}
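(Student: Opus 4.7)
The plan is to identify $\Lambda^2_T(M,U,H,\mathcal{P}_T)$ isometrically with a closed subspace of the Hilbert space $\mathcal{H} := L^2([0,T]\times\Omega, \mathcal{P}_T, \alpha_M; \text{L}_2(U,H))$ via the map $\iota: X \mapsto X \circ Q_M^{1/2}$, and to show that $\iota(\mathcal{E}_T(U,H))$ is dense in $\iota(\Lambda^2_T(M,U,H,\mathcal{P}_T))$; density in $\Lambda^2_T$ then follows because $\iota$ is an isometry by definition of the norm $\|\cdot\|_{\Lambda^2_T}$.

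First I would verify that $\mathcal{E}_T(U,H) \subseteq \Lambda^2_T(M,U,H,\mathcal{P}_T)$: for $A = \sum_{k=0}^{N-1}\phi_k \mathds{1}_{]t_k,t_{k+1}]}$ with $\phi_k \in \text{L}(U,H)$ of finite image and $\mathcal{F}_{t_k}$-measurable, the operator $\phi_k \circ Q_M^{1/2}(t,\omega)$ lies in $\text{L}_2(U,H)$ since $\text{L}_2$ is a two-sided ideal in the bounded operators and $Q_M^{1/2}(t,\omega) \in \text{L}_2(U)$ for $\alpha_M$-a.e. $(t,\omega)$; predictability of $\iota(A)$ follows from that of $Q_M^{1/2}$ and from $\mathcal{F}_{t_k}$-measurability of the $\phi_k$; finally $\|\iota(A)\|_{\mathcal{H}}$ is finite because the $\phi_k$ are uniformly bounded in operator norm and $\alpha_M$ is a finite measure.

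Second, I would run the standard three-step density argument in $\mathcal{H}$, applied to $Y := \iota(X)$ for arbitrary $X \in \Lambda^2_T$. Truncation by $Y_n := Y \cdot \mathds{1}_{\{\|Y\|_{\text{L}_2(U,H)}\le n\}}$ and dominated convergence reduce to the case where $Y$ is uniformly bounded in $\text{L}_2(U,H)$-norm. Separability of the Hilbert space $\text{L}_2(U,H)$ then allows approximation of the bounded, predictable $Y$ in $\mathcal{H}$-norm by predictable processes taking finitely many values in $\text{L}_2(U,H)$. A functional monotone class argument on the predictable $\sigma$-algebra $\mathcal{P}_T$, starting from the $\cap$-stable generator $\{\,]s,t]\times F : 0\le s<t\le T,\, F \in \mathcal{F}_s\}$, reduces further to step processes of the form $Z = \sum_k \psi_k \mathds{1}_{]t_k,t_{k+1}]}$ with $\psi_k$ simple, $\mathcal{F}_{t_k}$-measurable, and $\text{L}_2(U,H)$-valued.

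The main obstacle is the lifting step: the naive lift $A := \sum_k \psi_k \mathds{1}_{]t_k,t_{k+1}]}$ (viewing each $\psi_k \in \text{L}_2(U,H) \hookrightarrow \text{L}(U,H)$) does belong to $\mathcal{E}_T(U,H)$, but $\iota(A) = \sum_k (\psi_k \circ Q_M^{1/2}) \mathds{1}_{]t_k,t_{k+1}]}$ carries a spurious extra factor of $Q_M^{1/2}$ and thus differs from $Z$. The resolution is to carry out the monotone class argument not on $\mathcal{H}$ but directly within the subspace $\iota(\Lambda^2_T) \subseteq \mathcal{H}$: one shows that the class of integrands of the form $(\phi \circ Q_M^{1/2}) \mathds{1}_F \mathds{1}_{]s,t]}$, with $\phi \in \text{L}(U,H)$ of finite image and $F \in \mathcal{F}_s$, is closed under bounded pointwise limits and truncation and generates $\iota(\Lambda^2_T)$. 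The key point enabling this is the characterization of $\iota(\Lambda^2_T)$ as the closed subspace of $\mathcal{H}$ consisting of processes $Y$ with $Y(t,\omega)|_{\ker Q_M^{1/2}(t,\omega)} = 0$ $\alpha_M$-a.e., combined with the density of finite-image bounded operators among $\text{L}_2(U,H)$-valued random variables. The argument of the preceding paragraph then produces approximants already of the form $\iota(A_n)$ with $A_n \in \mathcal{E}_T(U,H)$, completing the proof.
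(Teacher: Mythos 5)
The paper never proves Proposition \ref{important}: it appears in Appendix B as part of an explicitly labelled review of standard material, with the proofs delegated to \cite{DaPratoZab} and \cite{MetivierStochInt}. So there is no in-paper argument to measure you against; what can be said is that your strategy is exactly the classical one underlying those references, and that you have correctly isolated the one genuinely non-trivial point. The embedding $\iota: X \mapsto X\circ Q_M^{1/2}$ into $\mathcal{H} := L^2([0,T]\times\Omega,\mathcal{P}_T,\alpha_M;\text{L}_2(U,H))$ is isometric by definition of $\|\cdot\|_{\Lambda^2_T}$; the inclusion $\mathcal{E}_T(U,H)\subseteq\Lambda^2_T(M,U,H,\mathcal{P}_T)$ holds as you argue (for the integrability, note that $\text{tr}\,Q_M=1$ $\alpha_M$-a.e., since $\text{tr}\ll M\gg_t=\langle M\rangle_t$); the truncation and simple-function reductions are routine; and the observation that the naive lift of an $\text{L}_2(U,H)$-valued predictable step process reintroduces a spurious factor $Q_M^{1/2}$ --- equivalently, that a general element of $\mathcal{H}$ need not lie in $\iota(\Lambda^2_T)=\{Y\in\mathcal{H}:Y|_{\ker Q_M^{1/2}}=0\ \alpha_M\text{-a.e.}\}$ --- is precisely the crux that a careless argument misses.

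The only place where your write-up does not yet constitute a proof is the final step. The phrase ``closed under bounded pointwise limits and truncation and generates $\iota(\Lambda^2_T)$'' invokes the closure property of a functional monotone class theorem for real-valued functions, which is not the right tool for establishing density in an $L^2$ space of $\text{L}_2(U,H)$-valued processes; as written, this step does not parse into a valid argument. It can, however, be closed entirely within your framework by orthogonality. The set $\mathcal{K}_0:=\{Y\in\mathcal{H}: Y(\text{id}_U-P)=0\ \alpha_M\text{-a.e.}\}$, where $P$ is the predictable orthogonal projection onto $\overline{\text{Im}\,Q_M^{1/2}}$ (e.g.\ $P=\text{s-}\lim_n Q_M^{1/2}(Q_M^{1/2}+n^{-1}\text{id}_U)^{-1}$), is a closed subspace of $\mathcal{H}$ which coincides with $\iota(\Lambda^2_T)$ and contains $\iota(\mathcal{E}_T(U,H))$. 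If $Y\in\mathcal{K}_0$ is orthogonal in $\mathcal{H}$ to every $\iota\big((h\otimes u)\mathds{1}_F\mathds{1}_{]s,t]}\big)$ with $h\in H$, $u\in U$, $F\in\mathcal{F}_s$ (these are images of elementary processes, and $(h\otimes u)\circ Q_M^{1/2}=h\otimes Q_M^{1/2}u$), then $\int_{]s,t]\times F}\langle YQ_M^{1/2}u,h\rangle_H\,\text{d}\alpha_M=0$ for all predictable rectangles, hence $\langle YQ_M^{1/2}u,h\rangle_H=0$ $\alpha_M$-a.e.\ for $u,h$ in countable dense sets, hence $YQ_M^{1/2}=0$ a.e.; thus $Y$ vanishes on $\overline{\text{Im}\,Q_M^{1/2}}$ and, by membership in $\mathcal{K}_0$, also on its orthogonal complement $\ker Q_M^{1/2}$, so $Y=0$. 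This shows $\iota(\mathcal{E}_T(U,H))$ is dense in $\iota(\Lambda^2_T)$, and the isometry transports this back to $\Lambda^2_T$. With this substitution your argument is complete and agrees in substance with the standard proof the paper cites.
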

	\begin{prop}\label{Isometry}
		Let $M \in \mathcal{M}_c^2(T;U).$ There exists a unique linear isometric map from ($\Lambda^2_T(M,U,H,\mathcal{P}_T)$, $||\cdot||_{\Lambda^2_T})$ to $(\mathcal{M}^2_c(T;H)$, $||\cdot||_{\mathcal{M}^2_T})$, which extends the linear map $\Phi_M: \mathcal{E}_T(U,H) \to \mathcal{M}_c^2(T;H)$, defined through
		$$\Phi_M(A):= \bigg(\sum_{k=0}^{N-1}\phi_k\big(M_{t_{k+1}\wedge t}-M_{t_k\wedge t}\big)\bigg)_{t \in [0,T]}$$ for $A := \sum_{k=0}^{N-1}\phi_k\mathds{1}_{]t_k,t_{k+1}]} \in \mathcal{E}_T(U,H).$ For $A \in \Lambda^2_T(M,U,H,\mathcal{P})$ the continuous, $(\mathcal{F}_t)$-adapted, square-integrable $H$-valued process $\Phi_M(A)$ is called \textit{stochastic integral (of $A$ with respect to $M$)} and is denoted by $(\int_{0}^{t}A(s)\text{d}M_s)_{t \in [0,T]}$ or simply by $A.M.$
	\end{prop}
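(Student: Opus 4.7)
The plan is to establish the isometry identity on elementary integrands, then extend by the standard density/completion argument using Proposition \ref{important}. First I would fix $A = \sum_{k=0}^{N-1}\phi_k\mathds{1}_{]t_k,t_{k+1}]} \in \mathcal{E}_T(U,H)$ and compute $\|\Phi_M(A)\|_{\mathcal{M}^2_T}^2 = \mathbb{E}[\|\Phi_M(A)_T\|_H^2]$. Expanding the inner product and using the $\mathcal{F}_{t_k}$-measurability of each $\phi_k$ together with the martingale property of $M$, the cross terms $\mathbb{E}[\langle \phi_k(M_{t_{k+1}}-M_{t_k}), \phi_j(M_{t_{j+1}}-M_{t_j})\rangle_H]$ for $k\neq j$ vanish by conditioning on $\mathcal{F}_{t_k \vee t_j}$. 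This reduces the computation to $\sum_k \mathbb{E}[\|\phi_k(M_{t_{k+1}}-M_{t_k})\|_H^2]$.

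The next step, which I expect to be the main technical obstacle, is to identify each summand with the corresponding integral against $\alpha_M$. For this I would express the squared norm through an orthonormal basis of $H$ and use Proposition \ref{Prop Qu Var} to connect $\mathbb{E}[\|\phi_k(M_{t_{k+1}}-M_{t_k})\|_H^2\mid \mathcal{F}_{t_k}]$ with $\ll M\gg_{t_{k+1}} - \ll M\gg_{t_k}$ conditioned on $\mathcal{F}_{t_k}$. Combined with Proposition \ref{thisthat}, which represents the quadratic variation as $\int Q_M(s)\, d\langle M\rangle_s$, and the identity $\operatorname{tr}(\phi_k Q_M(s)\phi_k^*) = \|\phi_k \circ Q_M^{1/2}(s)\|_2^2$, each summand becomes $\mathbb{E}\bigl[\int_{t_k}^{t_{k+1}}\|\phi_k \circ Q_M^{1/2}(s)\|_2^2\, d\langle M\rangle_s\bigr]$. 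Summing in $k$ yields $\int_{[0,T]\times \Omega}\|A\circ Q_M^{1/2}\|_2^2\, d\alpha_M = \|A\|_{\Lambda^2_T}^2$, which is precisely the desired isometry on $\mathcal{E}_T(U,H)$.

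Having established that $\Phi_M$ is a linear isometry from $(\mathcal{E}_T(U,H), \|\cdot\|_{\Lambda^2_T})$ into the Banach (in fact Hilbert) space $(\mathcal{M}_c^2(T;H),\|\cdot\|_{\mathcal{M}^2_T})$, I would invoke Proposition \ref{important}, which tells us that $\mathcal{E}_T(U,H)$ is dense in $\Lambda_T^2(M,U,H,\mathcal{P}_T)$. Since isometries are uniformly continuous and $\mathcal{M}^2_c(T;H)$ is complete, $\Phi_M$ extends uniquely to a linear isometry defined on all of $\Lambda^2_T(M,U,H,\mathcal{P}_T)$: for $A \in \Lambda^2_T$ pick approximating $(A_n) \subseteq \mathcal{E}_T$ with $\|A_n - A\|_{\Lambda^2_T}\to 0$, note $(\Phi_M(A_n))$ is Cauchy in $\mathcal{M}^2_c(T;H)$ by the isometry identity, and set $\Phi_M(A)$ to be its limit. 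Uniqueness is then immediate because any two continuous extensions agreeing on the dense subset $\mathcal{E}_T$ must coincide, and the resulting extension inherits continuity, $(\mathcal{F}_t)$-adaptedness, and square-integrability from membership in $\mathcal{M}^2_c(T;H)$.
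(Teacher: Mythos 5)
Your proposal is correct, and since the paper gives no proof of Proposition \ref{Isometry} at all --- it defers to Section 3.4 of Da Prato--Zabczyk and Section 14 of M\'etivier --- your argument is exactly the standard one those references use and that the paper implicitly relies on: the isometry identity on $\mathcal{E}_T(U,H)$ via the vanishing of cross terms and the representation $\ll M \gg_t\, = \int_0^t Q_M(s)\,\text{d}\langle M\rangle_s$ from Propositions \ref{Prop Qu Var} and \ref{thisthat}, followed by the unique isometric extension to $\Lambda^2_T(M,U,H,\mathcal{P}_T)$ furnished by the density statement of Proposition \ref{important} and the completeness of $(\mathcal{M}^2_c(T;H),||\cdot||_{\mathcal{M}^2_T})$. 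The only point worth making explicit in a written version is that the identification of $\mathbb{E}\big[||\phi_k(M_{t_{k+1}}-M_{t_k})||_H^2\,\big|\,\mathcal{F}_{t_k}\big]$ with the conditional expectation of $\int_{t_k}^{t_{k+1}}||\phi_k\circ Q_M^{1/2}(s)||_2^2\,\text{d}\langle M\rangle_s$ uses that $\phi_k$ takes only finitely many values and is $\mathcal{F}_{t_k}$-measurable, so one may argue on each of the finitely many sets where it is constant.
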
The final step of the construction consists of a localization in order to enlarge the class of admissible integrands. Let $M$ be as before and consider an operator-valued process $A$, which fulfills (i) and (ii) of Definition \ref{Lambda space}, but instead of (iii) we now only require $A$ to fulfill $$\mathbb{P}\bigg(\int_{0}^{T}||X \circ Q^{\frac{1}{2}}_M(s)||^2_2\text{d}\langle M \rangle(s)\bigg) < +\infty. $$We denote the set of all such $A$ by $\Lambda_T(M,U,H,\mathcal{P}_T)$. Clearly $\Lambda^2_T(M,U,H,\mathcal{P}_T) \subseteq \Lambda_T(M,U,H,\mathcal{P}_T)$. Reminiscent to Step 4 of Section 2.3.2 in \cite{RSPDE}, one defines  	\begin{equation}\label{keykey}\int_{0}^{t}A(s)\text{d}M_s := \underset{n \to +\infty}{\text{lim}}\int_{0}^{t}\mathds{1}_{]0,\tau_n]}A(s)\text{d}M_s \,\,\, \mathbb{P}\text{-a.s.}
	\end{equation} for any sequence of $(\mathcal{F}_t)$-stopping times $(\tau_n)_{n \in \mathbb{N}}$, which fulfills
	\begin{enumerate}
		\item [(i)] $(\tau_n)_{n \in \mathbb{N}}$ is non-decreasing and converges to $T$ $\mathbb{P}$-a.s.,
		\item[(ii)] $\mathds{1}_{]0,\tau_n]}A \in \Lambda^2_T(M,U,H,\mathcal{P}_T)$ for every $n \in \mathbb{N}$.
	\end{enumerate} For example, one may choose $\tau_n(\omega) := \text{inf}\big\{t\in [0,T]\big{|}\int_{0}^{t}||A \circ Q^{\frac{1}{2}}_M(s,\omega)||^2_2 \text{d}\langle M \rangle (s) > n\big\} \wedge T$ and one verifies that (\ref{keykey}) does not depend on the particular sequence $(\tau_n)_{n \in \mathbb{N}}$. Clearly for $A \in \Lambda_T(M,U,H,\mathcal{P}_T)$ the stochastic integral $A.M$ is a continuous, local $H$-valued martingale.\\ \\
	Finally, we introduce the definition of stochastic integrals with respect to continuous \textit{local }martingales.
	\begin{dfn}
		Let $(M_t)_{t \in [0,T]}$ be a continuous, $(\mathcal{F}_t)$-adapted $U$-valued local martingale such that for every element $\tau_n$ of its localizing sequence $(\tau_n)_{n \in \mathbb{N}}$, the martingale $(M_{t \wedge \tau_n})_{t\in [0,T]}$ belongs to $\mathcal{M}^2_c(T;U)$. Define $$\Lambda^2_{T,\text{loc}}(M,U,H,\mathcal{P}_T) := \underset{n \in \mathbb{N}}{\bigcap}\Lambda^2_T(M_{\cdot \wedge \tau_n},U,H,\mathcal{P}_T)$$and for $A \in \Lambda^2_{T,\text{loc}}(M,U,H,\mathcal{P}_T)$ set $\int_{0}^{t}A(s)\text{d}M_s := \underset{n \to +\infty}{\text{lim}}\int_{0}^{t}A(s)\text{d}M_{s \wedge \tau_n},\,\,\,t\in [0,T]$. This definition does not depend on the sequence $(\tau_n)_{n \in \mathbb{N}}$.
	\end{dfn}
	\subsubsection*{Properties of the stochastic integral}
	The following proposition and its proof can be found in Section 4.3 of \cite{DaPratoZab}. 
	\begin{prop}\label{Shape qu var, Q, a for stoch int}
		Let $Q \in \text{L}^+_1(U)$, $W \in \mathcal{M}^2_c(T;U)$ be a $(\mathcal{F}_t)$-$Q$-Wiener process and $A \in \Lambda^2_T(W,U,H,\mathcal{P}).$ Then \begin{equation*}\label{oneten}
		\ll A.W \gg_t = \int_{0}^{t}(A(s)\circ Q^{\frac{1}{2}})(A(s)\circ Q^{\frac{1}{2}})^*\text{d}s,\,\,\,t \in [0,T] \,\,\mathbb{P}\text{-a.s.}
		\end{equation*}
	\end{prop}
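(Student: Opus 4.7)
My plan is to use the martingale characterization of quadratic variation given in Proposition \ref{Prop Qu Var}. Set $M := A.W$ and $V_t := \int_{0}^{t}(A(s)\circ Q^{\frac{1}{2}})(A(s)\circ Q^{\frac{1}{2}})^*\text{d}s$, viewed as an $\text{L}_1(H)$-valued process. One first checks that $V$ is well-defined: the integrand is $\text{L}_1(H)$-valued with $\|(A Q^{1/2})(AQ^{1/2})^*\|_{\text{L}_1(H)} \leq \|A\circ Q^{1/2}\|_2^2$, so the Bochner integral converges in $\text{L}_1(H)$ by condition (iii) in Definition \ref{Lambda space} (after passing, if needed, to a localizing sequence). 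From the construction $V$ is $(\mathcal{F}_t)$-adapted, continuous, with $V_0 = 0$, and pointwise the integrand is nonnegative and symmetric, so $V$ is increasing in the sense defined above. By Proposition \ref{Prop Qu Var} it then suffices to prove, for all $a,b \in H$, that
\begin{equation*}
N_t^{a,b} := \langle M_t,a\rangle_H\langle M_t,b\rangle_H - \langle V_t a, b\rangle_H, \quad t \in [0,T],
\end{equation*}
is a real-valued $(\mathcal{F}_t)$-martingale.

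The first step is to verify this for elementary $A = \sum_{k=0}^{N-1}\phi_k \mathds{1}_{]t_k,t_{k+1}]} \in \mathcal{E}_T(U,H)$. Here $M_t = \sum_{k}\phi_k(W_{t_{k+1}\wedge t}-W_{t_k\wedge t})$, and one computes $\langle M_t,a\rangle_H \langle M_t,b\rangle_H$ directly as a double sum. Using $\ll W\gg_t = tQ$ (i.e.\ Proposition \ref{Generalized Levy} applied to $W$) together with independence of increments and the $\mathcal{F}_{t_k}$-measurability of $\phi_k$, the cross terms (for $k\neq l$) contribute a martingale, while the diagonal terms yield exactly $\langle \phi_k Q\phi_k^*\, a,b\rangle_H$ times the increment of $t$ between $t_k$ and $t_{k+1}$, which telescopes to $\langle V_t a, b\rangle_H$ in this elementary case. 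Thus $N^{a,b}$ is an $(\mathcal{F}_t)$-martingale for elementary integrands.

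For a general $A \in \Lambda^2_T(W,U,H,\mathcal{P}_T)$, pick by Proposition \ref{important} a sequence $A_n \in \mathcal{E}_T(U,H)$ with $A_n \to A$ in $\|\cdot\|_{\Lambda^2_T}$. By the isometry in Proposition \ref{Isometry}, $A_n.W \to A.W$ in $\mathcal{M}^2_c(T;H)$, and in particular $\langle A_n.W_t,a\rangle_H \langle A_n.W_t,b\rangle_H \to \langle M_t,a\rangle_H\langle M_t,b\rangle_H$ in $L^1(\mathbb{P})$ for every $t$ by Cauchy--Schwarz and the maximal inequality. For the right-hand side, the inequality
\begin{equation*}
\bigl|\langle V_t^{(n)}a,b\rangle_H - \langle V_t a,b\rangle_H\bigr| \leq \|a\|_H \|b\|_H \int_0^t \bigl(\|A_n\circ Q^{1/2}\|_2 + \|A\circ Q^{1/2}\|_2\bigr)\|(A_n-A)\circ Q^{1/2}\|_2 \,ds
\end{equation*}
together with Cauchy--Schwarz in $L^2(\alpha_W)$ gives $L^1(\mathbb{P})$-convergence of $\langle V_t^{(n)} a,b\rangle_H$ to $\langle V_t a,b\rangle_H$. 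Passing to the $L^1(\mathbb{P})$-limit in the martingale identity $\mathbb{E}[N^{a,b,n}_t\mathds{1}_F] = \mathbb{E}[N^{a,b,n}_s\mathds{1}_F]$ for $F \in \mathcal{F}_s$ proves that $N^{a,b}$ is an $(\mathcal{F}_t)$-martingale for general $A$, and Proposition \ref{Prop Qu Var} yields $\ll A.W\gg_t = V_t$ $\mathbb{P}$-a.s.

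The main obstacle is the bookkeeping in the elementary-integrand computation, specifically reducing the vanishing of cross-terms to the increments of $W$ against $\ll W\gg = tQ$, and then arranging the convergence argument so that it works simultaneously for all pairs $a,b \in H$; the latter is handled by noting that both sides are $\mathbb{P}$-a.s. continuous in $t$ and bilinear in $(a,b)$, so it suffices to verify the identity on a countable dense subset and then invoke continuity and Proposition \ref{Prop Qu Var}.
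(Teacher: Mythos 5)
Your proof is correct, and it is essentially the standard argument: the paper itself gives no proof of Proposition \ref{Shape qu var, Q, a for stoch int} but defers to Section 4.3 of \cite{DaPratoZab}, where exactly this route is taken --- verify the martingale property of $\langle M_t,a\rangle_H\langle M_t,b\rangle_H-\langle V_ta,b\rangle_H$ for elementary integrands, pass to the limit via Proposition \ref{important} and the isometry of Proposition \ref{Isometry}, and conclude with the characterization in Proposition \ref{Prop Qu Var}. No gaps; the final remark about countable dense families of $(a,b)$ is not even needed, since Proposition \ref{Prop Qu Var} only requires the martingale property for each fixed pair.
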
It is well known that $H_1.(H_2.M) = (H_1 \cdot H_2).M$ holds in the case of finite-dimensional stochastic integration. We use the Hilbert space-analogue of this result, stated in Proposition \ref{Asso stoch int} below, multiple times within our main proofs. This proposition and Lemma \ref{super lemma} are taken from \cite{Asperti} (c.f. Lemma 3.6. and Theorem 3.7. therein). We also need two slight generalizations of this result, which we both state and prove in Proposition \ref{localize} at the end of this appendix. Let $G$ denote another separable, infinite-dimensional Hilbert space.
	\begin{lem}\label{super lemma}
		Let $M \in \mathcal{M}^2_c(T;U)$, $A \in \Lambda^2_T(M,U,H,\mathcal{P}_T)$ and $B: [0,T]\times \Omega \to \text{Lin}(H,G)$. The following are equivalent:
		$$\text{(i) } B \circ A \in \Lambda^2_T(M,U,G,\mathcal{P}_T) \,\,\,\,\,\,\,\,\,\,
		\text{(ii) } B \in \Lambda^2_T(A.M,H,G,\mathcal{P}_T).$$
		In this case $B.(A.M)$ and $B\circ A.M$ are equal in norm in $\mathcal{M}^2_c(T;G)$.
	\end{lem}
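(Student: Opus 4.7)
\bigskip

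\noindent The plan is to reduce both conditions (i) and (ii) to a single pathwise identity that links the ``$(s,\omega)$-local'' quantities $A(s)\circ Q_M^{1/2}(s)$ and $Q_{A.M}^{1/2}(s)$. First, I would establish (as a general-martingale analogue of Proposition \ref{Shape qu var, Q, a for stoch int}) the formula
$$\ll A.M\gg_t\;=\;\int_0^t\bigl(A(s)\circ Q_M^{1/2}(s)\bigr)\bigl(A(s)\circ Q_M^{1/2}(s)\bigr)^{\!*}\,d\langle M\rangle_s\quad\mathbb{P}\text{-a.s.}$$
Taking traces gives $d\langle A.M\rangle_s=\|A(s)\circ Q_M^{1/2}(s)\|_2^{\,2}\,d\langle M\rangle_s$, and comparing with Proposition \ref{thisthat} applied to $N:=A.M$ yields the key pointwise identity
$$Q_{A.M}(s)\cdot\|A(s)\circ Q_M^{1/2}(s)\|_2^{\,2}\;=\;\bigl(A(s)\circ Q_M^{1/2}(s)\bigr)\bigl(A(s)\circ Q_M^{1/2}(s)\bigr)^{\!*}\quad\alpha_M\text{-a.e.}$$

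\noindent Second, for any operator $C:H\to G$ a direct trace computation using this identity gives
$$\|C\circ Q_{A.M}^{1/2}(s)\|_2^{\,2}\,d\langle A.M\rangle_s\;=\;\|C\circ A(s)\circ Q_M^{1/2}(s)\|_2^{\,2}\,d\langle M\rangle_s.$$
Applying this with $C=B(s)$ and integrating over $[0,T]\times\Omega$ gives
$$\|B\|^2_{\Lambda^2_T(A.M,H,G,\mathcal{P}_T)}\;=\;\|B\circ A\|^2_{\Lambda^2_T(M,U,G,\mathcal{P}_T)},$$
which is exactly the numerical content of (i)$\Leftrightarrow$(ii); predictability of $B\circ A\circ Q_M^{1/2}$ is equivalent to that of $B\circ Q_{A.M}^{1/2}$ either by the same pathwise identity or by approximating $A$ in $\Lambda^2_T(M,U,H,\mathcal{P}_T)$ by elementary integrands (Proposition \ref{important}), for which both properties are transparent.

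\noindent For the final claim that $B.(A.M)$ and $(B\circ A).M$ agree in $\mathcal{M}_c^2(T;G)$, I would verify this first on common-partition elementary $A=\sum_k\phi_k\mathds{1}_{]t_k,t_{k+1}]}$ and $B=\sum_k\Psi_k\mathds{1}_{]t_k,t_{k+1}]}$, where both sides reduce by direct computation to $\sum_k\Psi_k\phi_k\bigl(M_{t_{k+1}\wedge t}-M_{t_k\wedge t}\bigr)$. The general case then follows by approximating $A$ in $\Lambda^2_T(M,U,H,\mathcal{P}_T)$ and $B$ in $\Lambda^2_T(A.M,H,G,\mathcal{P}_T)$ by elementary processes, passing to the limit on both sides using the isometry (Proposition \ref{Isometry}) together with the norm identity from the previous step, which ensures that approximations of $B$ with respect to $A.M$ induce matching approximations of $B\circ A$ with respect to $M$.

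\noindent The main obstacle is the equivalence of the predictability requirements inside the definitions of the two $\Lambda^2_T$-spaces: $Q_{A.M}^{1/2}$ is only defined up to an $\alpha_{A.M}$-null set, and $A\circ Q_M^{1/2}$ is generally not invertible, so one cannot simply read one predictability off the other pointwise. The cleanest workaround is to fix a predictable representative of $Q_{A.M}^{1/2}$ (Proposition \ref{thisthat}) and to transfer predictability through the elementary-process approximation mentioned above, where $B\circ A$ inherits predictability directly from $B$ and the piecewise-constant structure of $A$.
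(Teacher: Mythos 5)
First, a point of comparison: the paper does not prove this lemma at all --- it is imported from \cite{Asperti} (Lemma 3.6 and Theorem 3.7 there), so there is no in-house argument to measure your proposal against. Judged on its own, your strategy is the standard one and its quantitative core is correct: the formula $\ll A.M\gg_t=\int_0^t(A(s)\circ Q_M^{1/2}(s))(A(s)\circ Q_M^{1/2}(s))^*\,\mathrm{d}\langle M\rangle_s$ is the right general-martingale analogue of Proposition \ref{Shape qu var, Q, a for stoch int}; taking traces gives $\mathrm{d}\langle A.M\rangle=\|A\circ Q_M^{1/2}\|_2^2\,\mathrm{d}\langle M\rangle$; comparison with Proposition \ref{thisthat} gives your pointwise identity; and the trace computation then yields $\|B\|_{\Lambda^2_T(A.M,H,G,\mathcal{P}_T)}=\|B\circ A\|_{\Lambda^2_T(M,U,G,\mathcal{P}_T)}$, which via the isometry of Proposition \ref{Isometry} is exactly the ``equal in norm'' claim. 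Your limit passage for the identity of the two integrals also works, and can be simplified: for elementary $B=\sum_k\Psi_k\mathds{1}_{]t_k,t_{k+1}]}$ and \emph{general} $A$ one already has $B.(A.M)=(B\circ A).M$, because the finitely many $\mathcal{F}_{t_k}$-measurable, finite-valued operators $\Psi_k$ commute with the stochastic integral over $]t_k,t_{k+1}]$; so only $B$ needs to be approximated, and the norm identity applied to $B_n-B$ transports that approximation to $B_n\circ A\to B\circ A$ in $\Lambda^2_T(M,U,G,\mathcal{P}_T)$ as you say.

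The one genuinely thin spot is the one you flag yourself: the equivalence of the two predictability requirements, together with the domain condition (i) of Definition \ref{Lambda space}, which your sketch never addresses. Your second workaround --- approximating $A$ by elementary processes --- I do not believe works as stated: convergence $A_m\to A$ in $\Lambda^2_T(M,U,H,\mathcal{P}_T)$ gives no control of $Q_{A_m.M}^{1/2}$ against $Q_{A.M}^{1/2}$ pointwise, and even for elementary $A_m$ the predictability of $B\circ Q_{A_m.M}^{1/2}(h)$ is not ``transparent'' from that of $B\circ A_m\circ Q_M^{1/2}(u)$. Your first workaround is the right one, but it needs to be made explicit: writing $S:=A\circ Q_M^{1/2}$, the identity $Q_{A.M}\,\|S\|_2^2=SS^*$ gives $\mathrm{Im}\,Q_{A.M}^{1/2}=\mathrm{Im}\,S$ ($\alpha_{A.M}$-a.e.), which settles the domain condition, and the polar decomposition $S=\|S\|_2\,Q_{A.M}^{1/2}\circ V$ with a \emph{predictable} partial isometry $V$ (obtained from the measurable pseudo-inverse of $(SS^*)^{1/2}$) lets you pass predictability of $h\mapsto B\circ Q_{A.M}^{1/2}(h)$ to that of $u\mapsto B\circ A\circ Q_M^{1/2}(u)$ and back. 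With that insertion the argument is complete; without it, the equivalence of (i) and (ii) is only proved at the level of the integrability condition (iii) of Definition \ref{Lambda space}.
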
From here we can readily obtain the following important statement:
	\begin{prop}\label{Asso stoch int}
		Let $M, A$ and $B$ be as in the previous lemma such that the equivalent properties therein are fulfilled. Then $\big(\int_{0}^{t}B \circ (A)(s)\text{d}M_s\big)_{t \in [0,T]}$ and $\big(\int_{0}^{t}B(s)\text{d}(\int_{0}^{s}A(r)\text{d}M_r)\big)_{t \in [0,T]}$ are equal in $\mathcal{M}^2_c(T;G)$. In particular we have 
		$$\int_{0}^{t}B \circ (A)(s)\text{d}M_s =  \int_{0}^{t}B(s)\text{d}\bigg(\int_{0}^{s}A(r)\text{d}M_r\bigg),\,\,\, t \in [0,T] \, \,\mathbb{P}\text{-a.s.}$$
	\end{prop}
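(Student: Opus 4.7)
The plan is to derive Proposition \ref{Asso stoch int} essentially for free from Lemma \ref{super lemma} by upgrading the ``equality in norm in $\mathcal{M}^2_c(T;G)$'' to a genuine $\mathbb{P}$-a.s.\ pathwise identity. Writing $Y_t := \int_0^t B \circ A(s)\,\text{d}M_s$ and $Z_t := \int_0^t B(s)\,\text{d}\big(\int_0^s A(r)\,\text{d}M_r\big)$ for $t \in [0,T]$, the assumption of the proposition is exactly that both $B \circ A \in \Lambda_T^2(M,U,G,\mathcal{P}_T)$ and $B \in \Lambda_T^2(A.M,H,G,\mathcal{P}_T)$, so both $Y$ and $Z$ are well-defined elements of $\mathcal{M}_c^2(T;G)$.

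First, I would invoke Lemma \ref{super lemma} directly to conclude that $Y$ and $Z$ coincide as elements of $\mathcal{M}_c^2(T;G)$, i.e.\ $\|Y - Z\|_{\mathcal{M}_T^2} = 0$. By the definition of the norm, this reads $\mathbb{E}[\|Y_T - Z_T\|_G^2] = 0$, so $Y_T = Z_T$ $\mathbb{P}$-a.s.

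Next, to upgrade this endpoint equality to an identity at every $t \in [0,T]$, I would use that the difference $D := Y - Z$ is a continuous $(\mathcal{F}_t)$-martingale on $[0,T]$ with $D_0 = 0$ and $D_T = 0$ $\mathbb{P}$-a.s. Applying the martingale property, $D_t = \mathbb{E}[D_T \mid \mathcal{F}_t] = 0$ $\mathbb{P}$-a.s.\ for each fixed $t \in [0,T]$. Alternatively (and more cleanly), one may invoke the norm equivalence with the $L^2(\Omega;L^\infty([0,T];G))$-norm stated right after the definition of $\mathcal{M}_c^2(T;G)$, which by Doob's maximal inequality gives $\mathbb{E}[\sup_{t \in [0,T]} \|D_t\|_G^2] = 0$ directly, yielding a single $\mathbb{P}$-null set outside of which $D_t = 0$ simultaneously for all $t \in [0,T]$.

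The path continuity of $Y$ and $Z$ then allows one to pick a common exceptional set, concluding $Y_t(\omega) = Z_t(\omega)$ for all $t \in [0,T]$ and all $\omega$ outside a single $\mathbb{P}$-null set. There is no real obstacle here; everything is a straightforward translation of Lemma \ref{super lemma} together with the standard fact that a continuous martingale which vanishes at the terminal time vanishes identically. The only bookkeeping point worth spelling out is that the asserted pointwise-in-$t$ $\mathbb{P}$-a.s.\ identity and the ``equality in $\mathcal{M}_c^2(T;G)$'' truly agree because of the maximal-inequality equivalence just recalled.
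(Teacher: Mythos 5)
Your argument from the point where you have $\|Y-Z\|_{\mathcal{M}^2_T}=0$ onward is correct and standard: $Y_T=Z_T$ $\mathbb{P}$-a.s., the martingale property (or Doob's maximal inequality) forces $Y_t=Z_t$ $\mathbb{P}$-a.s.\ for each $t$, and path continuity yields a single null set. The problem is your very first step. Lemma \ref{super lemma} asserts that $B.(A.M)$ and $(B\circ A).M$ are \emph{equal in norm} in $\mathcal{M}^2_c(T;G)$, i.e.\ $\|B.(A.M)\|_{\mathcal{M}^2_T}=\|(B\circ A).M\|_{\mathcal{M}^2_T}$ (equivalently, via Proposition \ref{Isometry}, $\|B\|_{\Lambda^2_T(A.M,H,G,\mathcal{P}_T)}=\|B\circ A\|_{\Lambda^2_T(M,U,G,\mathcal{P}_T)}$). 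Two elements having the same norm does not give $\|Y-Z\|_{\mathcal{M}^2_T}=0$, so you cannot invoke the lemma ``directly'' to conclude that $Y$ and $Z$ coincide as elements of $\mathcal{M}^2_c(T;G)$: that coincidence is precisely the additional content of Proposition \ref{Asso stoch int}, which is why it is stated separately from the lemma. (If ``equal in norm'' already meant ``the difference has norm zero'', the proposition's first assertion would be contained verbatim in the lemma and only your second paragraph would remain to be proved.)

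The missing idea is the reduction to elementary integrands. For $B=\sum_{k}\phi_k\mathds{1}_{]t_k,t_{k+1}]}\in\mathcal{E}_T(H,G)$ one has $B.(A.M)_t=\sum_k\phi_k\big((A.M)_{t_{k+1}\wedge t}-(A.M)_{t_k\wedge t}\big)$ by definition, and $(B\circ A).M_t$ reduces to the same expression by linearity in the integrand and the fact that the finite-image, $\mathcal{F}_{t_k}$-measurable operators $\phi_k$ interchange with the stochastic integral; hence the identity holds pathwise for elementary $B$. For general $B\in\Lambda^2_T(A.M,H,G,\mathcal{P}_T)$ choose elementary $B_n\to B$ in $\Lambda^2_T(A.M,H,G,\mathcal{P}_T)$ (Proposition \ref{important}); the norm identity of Lemma \ref{super lemma} applied to $B_n-B$ gives $B_n\circ A\to B\circ A$ in $\Lambda^2_T(M,U,G,\mathcal{P}_T)$, and the isometry of Proposition \ref{Isometry} then allows passing to the limit in $\mathcal{M}^2_c(T;G)$ on both sides of $B_n.(A.M)=(B_n\circ A).M$. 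Only at this point do you obtain $\|Y-Z\|_{\mathcal{M}^2_T}=0$; from there your martingale/Doob argument correctly completes the proof of the $\mathbb{P}$-a.s.\ pathwise identity.
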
Finally, we generalize the above proposition to elements $A \in \Lambda_T(M,U,H,\mathcal{P}_T)$ and $B \in \Lambda_T(A.M,H,G,\mathcal{P}_T)$.
	
	\begin{prop}\label{localize}Let $W$ be an $(\mathcal{F}_t)$-$Q$-Wiener 		 process for $Q \in \text{L}^+_1(U)$.
		\begin{enumerate} 
			\item [(i)] Let $A \in \Lambda^2_T(W,U,H,\mathcal{P}_T)$ and $B: [0,T]\times \Omega \to \text{Lin(H,G)}$ such that $B \circ A \in \Lambda_T(W,U,G,\mathcal{P}_T)$. Then $B \in \Lambda_T(A.W,H,G,\mathcal{P}_T)$ and $$\int_{0}^{t}B(s)\text{d}\bigg(\int_{0}^{s}A(r)\text{d}W_r\bigg)=\int_{0}^{t}B\circ A(s)\text{d}W_s,\,\,\, t \in [0,T] \,\, \mathbb{P}\text{-a.s.}$$ 
			\item[(ii)] Let $A \in \Lambda_T(W,U,H,\mathcal{P}_T)$ and $B: [0,T]\times \Omega\to \text{Lin}(H,G)$ such that $B \circ A \in \Lambda^2_T(W,U,G,\mathcal{P}_T)$. Then $B \in \Lambda^2_{T,\text{loc}}(A.W,H,G,\mathcal{P}_T)$ and 
			$$\int_{0}^{t}B(s)\text{d}\bigg(\int_{0}^{s}A(r)\text{d}W_r\bigg)=\int_{0}^{t}B\circ A(s)\text{d}W_s,\,\,\, t \in [0,T] \,\, \mathbb{P}\text{-a.s.}$$  
		\end{enumerate}
		
	\end{prop}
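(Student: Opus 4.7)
\noindent\textbf{Proof proposal for Proposition \ref{localize}.}

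The plan is to reduce both parts to the already-established Proposition \ref{Asso stoch int} (via Lemma \ref{super lemma}) by a localization argument: in each part I introduce a sequence of $(\mathcal{F}_t)$-stopping times that brings the relevant object from its $\Lambda_T$-class back into the globally integrable class $\Lambda^2_T$, apply Proposition \ref{Asso stoch int} at every level, and pass to the limit using the definition (\ref{keykey}) of the stochastic integral for merely locally integrable integrands.

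For part (i), set
$$\tau_n := \inf\bigg\{t \in [0,T] : \int_0^t \|(B \circ A)(s) \circ Q^{1/2}\|_2^2\,\text{d}s > n\bigg\} \wedge T.$$
These are $(\mathcal{F}_t)$-stopping times and, since $B \circ A \in \Lambda_T(W,U,G,\mathcal{P}_T)$, one has $\tau_n \nearrow T$ $\mathbb{P}$-a.s.\ with $\mathds{1}_{]0,\tau_n]}(B \circ A) \in \Lambda^2_T(W,U,G,\mathcal{P}_T)$ for every $n$. Because $A \in \Lambda^2_T(W,U,H,\mathcal{P}_T)$, Lemma \ref{super lemma} applied to $\mathds{1}_{]0,\tau_n]} B$ yields $\mathds{1}_{]0,\tau_n]} B \in \Lambda^2_T(A.W,H,G,\mathcal{P}_T)$ together with the identity
$$\int_0^t \mathds{1}_{]0,\tau_n]}(s)(B \circ A)(s)\,\text{d}W_s = \int_0^t \mathds{1}_{]0,\tau_n]}(s) B(s)\,\text{d}(A.W)_s \quad \text{in } \mathcal{M}^2_c(T;G).$$
By Proposition \ref{Shape qu var, Q, a for stoch int}, the $\Lambda^2_T(A.W,\cdot,\cdot,\cdot)$-norm of $\mathds{1}_{]0,\tau_n]} B$ coincides with the $\Lambda^2_T(W,\cdot,\cdot,\cdot)$-norm of $\mathds{1}_{]0,\tau_n]}(B \circ A)$, so the same equivalence at the level of $\mathbb{P}$-a.s.\ finiteness shows $B \in \Lambda_T(A.W,H,G,\mathcal{P}_T)$ and that $(\tau_n)$ localizes $B$ relative to $A.W$ as well. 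Letting $n \to \infty$ in both sides of the above identity, using (\ref{keykey}) and the independence of the limit from the chosen localizing sequence, yields the asserted $\mathbb{P}$-a.s.\ equality.

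For part (ii), the roles are reversed: now $A.W$ is only a continuous local martingale, so I localize on the integrator. Choose
$$\sigma_n := \inf\bigg\{t \in [0,T] : \int_0^t \|A(s) \circ Q^{1/2}\|_2^2\,\text{d}s > n\bigg\} \wedge T,$$
which satisfy $\sigma_n \nearrow T$ $\mathbb{P}$-a.s.\ and $\mathds{1}_{]0,\sigma_n]} A \in \Lambda^2_T(W,U,H,\mathcal{P}_T)$, so that $(A.W)_{\cdot \wedge \sigma_n} = \int_0^{\cdot} \mathds{1}_{]0,\sigma_n]}(s) A(s)\,\text{d}W_s$ lies in $\mathcal{M}^2_c(T;H)$ and serves as a localizing sequence for $A.W$ in the sense preceding (\ref{keykey}). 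Since $B \circ A \in \Lambda^2_T(W,U,G,\mathcal{P}_T)$, also $B \circ (\mathds{1}_{]0,\sigma_n]} A) = \mathds{1}_{]0,\sigma_n]}(B \circ A) \in \Lambda^2_T(W,U,G,\mathcal{P}_T)$, and Lemma \ref{super lemma} combined with Proposition \ref{Asso stoch int} gives $B \in \Lambda^2_T((A.W)_{\cdot \wedge \sigma_n},H,G,\mathcal{P}_T)$ for every $n$ (so that $B \in \Lambda^2_{T,\text{loc}}(A.W,H,G,\mathcal{P}_T)$) together with
$$\int_0^t B(s)\,\text{d}((A.W)_{\cdot \wedge \sigma_n})_s = \int_0^t \mathds{1}_{]0,\sigma_n]}(s)(B \circ A)(s)\,\text{d}W_s, \quad t \in [0,T] \,\,\mathbb{P}\text{-a.s.}$$
Letting $n \to \infty$, the left-hand side converges $\mathbb{P}$-a.s.\ to $\int_0^t B(s)\,\text{d}(A.W)_s$ by the definition of the stochastic integral with respect to a continuous local martingale, while the right-hand side converges to $\int_0^t (B \circ A)(s)\,\text{d}W_s$ because $B \circ A \in \Lambda^2_T(W,U,G,\mathcal{P}_T)$ gives $\mathcal{M}^2_c(T;G)$-convergence of the stopped integrals, hence $\mathbb{P}$-a.s.\ convergence along a subsequence, which suffices as the truncations are monotone in $n$.

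The main technical obstacle I anticipate is the careful translation of the $\Lambda_T$- and $\Lambda^2_T$-integrability conditions between the integrators $W$ and $A.W$ via the identity $\ll A.W \gg_t = \int_0^t (A Q^{1/2})(A Q^{1/2})^*\,\text{d}s$ of Proposition \ref{Shape qu var, Q, a for stoch int}. One must verify that the hypothesis ``$B \circ A$ lies in the appropriate class over $W$'' corresponds exactly to ``$B$ lies in the analogous class over $A.W$'', so that the stopping times introduced above simultaneously localize on both sides of the associativity identity. This is essentially bookkeeping built into Lemma \ref{super lemma}, but it is the spot where a careless argument could slip.
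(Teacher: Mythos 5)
Your proposal is correct and follows essentially the same route as the paper's proof: localize via stopping times that bring the merely locally integrable object back into $\Lambda^2_T$, apply Lemma \ref{super lemma} and Proposition \ref{Asso stoch int} at each truncation level, and pass to the limit using the definition of the localized stochastic integral. The only differences are cosmetic --- you write out the localizing stopping times explicitly and spell out the norm-transfer between the classes over $W$ and over $A.W$, which the paper leaves implicit in its citation of Lemma \ref{super lemma}.
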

	\begin{proof}
		\begin{enumerate}
			\item [(i)] Since $B\circ A\in \Lambda_T(W,U,G,\mathcal{P}_T)$, there exists a sequence of $(\mathcal{F}_t)$-stopping times $(\tau_n)_{n \in \mathbb{N}}$ with properties (i) and (ii), mentioned in the localization step of the construction on the previous pages, such that
			$$\int_{0}^{t}B\circ A(s)\text{d}W_s = \underset{n \to +\infty}{\text{lim}}\int_{0}^{t}\mathds{1}_{]0,\tau_n]}B\circ A(s)\text{d}W_s,\,\,\, t \in [0,T]\,\,\mathbb{P}\text{-a.s.}$$Using Lemma \ref{super lemma} and Proposition \ref{Asso stoch int} we obtain $B \in \Lambda_T(A.W,H,G,\mathcal{P}_T)$ and that $(\tau_n)_{n \in \mathbb{N}}$ is also a proper localizing sequence for $B$. Therefore $$\int_{0}^{t}\mathds{1}_{]0,\tau_n]}B\circ A(s)\text{d}W_s = \int_{0}^{t}\mathds{1}_{]0,\tau_n]}B(s)\text{d}\bigg(\int_{0}^{s}A(r)\text{d}W_r\bigg),\,\,\,t \in [0,T]\,\,\mathbb{P}\text{-a.s.}$$for every $n \in \mathbb{N}$. But since by definition $$\underset{n \to +\infty}{\text{lim}}\int_{0}^{t}\mathds{1}_{]0,\tau_n]}B(s)\text{d}\bigg(\int_{0}^{s}A(r)\text{d}W_r\bigg) = \int_{0}^{t}B(s)\text{d}\bigg(\int_{0}^{s}A(r)\text{d}W_r\bigg)$$for every $t \in [0,T]$ $\mathbb{P}$-a.s., the assertion follows.
			\item[(ii)] Since $A \in \Lambda_T(M,U,H,\mathcal{P}_T)$, there exists a sequence $(\sigma_n)_{n \in \mathbb{N}}$ of $(\mathcal{F}_t)$-stopping times with properties (i) and (ii) as above such that
			$$\int_{0}^{t}A(s)\text{d}W_s = \underset{n \to +\infty }{\text{lim}}\int_{0}^{t}\mathds{1}_{]0,\sigma_n]}A(s)\text{d}W_s,\,\,\,t\in [0,T]\,\,\mathbb{P}\text{-a.s.}$$ and $\mathds{1}_{]0,\sigma_n]}A \in \Lambda^2_T(W,U,H,\mathcal{P}_T)$ for all $n \in \mathbb{N}$. Since $B\circ A \in \Lambda^2_T(W,U,G,\mathcal{P}_T)$, we also have $$B \circ \mathds{1}_{]0,\sigma_n]}A = \mathds{1}_{]0,\sigma_n]}B\circ A \in \Lambda^2_T(W,U,G,\mathcal{P}_T).$$ Consequently we conclude that all terms in the following equation are well-defined and fulfill, for every $n \in \mathbb{N},$
			\begin{equation}\label{olaola}
			\int_{0}^{t}B(s)\text{d}\big(\int_{0}^{s}\mathds{1}_{]0,\sigma_n]}A(r)\text{d}W_r\big) = \int_{0}^{t}\mathds{1}_{]0,\sigma_n]}B\circ A(s)\text{d}W_s,\,\,\,t \in [0,T]\,\,\mathbb{P}\text{-a.s.}
			\end{equation}For $n\to +\infty$, the right-hand side of (\ref{olaola}) clearly converges $\mathbb{P}$-a.s. to $\int_{0}^{t}B\circ A(s)\text{d}W_s$ with $\mathbb{P}$-zero set independent of $t \in [0,T]$, while the limit of the left-hand side is by definition equal to $\int_{0}^{t}B(s)\text{d}\big(\int_{0}^{s}A(r)\text{d}W_r\big)$, again with zero set independent of $t \in [0,T]$. This concludes the proof.\qedhere	\end{enumerate}
	\end{proof}
	Finally, we mention that the entire construction and all properties presented in this section immediately carry over to the case $T = +\infty$. We would like to stress, however, that these extended stochastic integrals on $ \Omega \times\mathbb{R}_+$ are in general only continuous \textit{local} martingales.
\section*{Acknowledgements}
The author would like to cordially thank Prof. Michael Röckner for drawing his attention to the result of Cherny and for many fruitful discussions along the making of this paper.

\bibliography{MALiterature}

\end{document}